\newcommand{\norm}[1]{\left\Vert#1\right\Vert}
\newcommand{\abs}[1]{\left\vert#1\right\vert}
\newcommand{\Set}[1]{\ensuremath{ \left\{ #1 \right\} }}
\newcommand{\set}[1]{\ensuremath{ \{ #1 \} }}
\newcommand*{\Cl}{\mathbf{cl}}
\newcommand*{\Int}{\mathbf{int}}
\newcommand*{\co}{{\bf conv}}
\newcommand*{\con}{{\rm s}}
\newcommand*{\cond}{{\bf s}}
\DeclareMathOperator*{\Lim}{\mathbf{Lim}}
\DeclareMathOperator*{\Limcstd}{\mathrm Lim}
\begin{document}

\title{The algebra of conditional sets and the concepts of conditional topology and compactness}
\ArXiV{}
\thanksColleagues{We thank Fares Maalouf, Stephan M\"uller and Martin Streckfu\ss\, for fruitful discussions. We wish to express our gratitude to an
anonymous referee for a careful reading of the manuscript and comments which essentially improved the presentation.}

\author[a,1,s]{Samuel Drapeau}
\author[b,2,t,v]{Asgar Jamneshan}
\author[c,3,u]{Martin Karliczek}
\author[b,4,v]{Michael Kupper}
\address[a]{CAFR and Department of Mathematics, Shanghai Jiao Tong University}
\address[b]{Department of Mathematics and Statistics, University of Konstanz}
\address[c]{Department of Mathematics, Humboldt University of Berlin}
\eMail[1]{drapeau@sjtu.edu.cn}
\eMail[2]{asgar.jamneshan@uni-konstanz.de}
\eMail[3]{karliczm@math.hu-berlin.de}
\eMail[4]{kupper@uni-konstanz.de}


\myThanks[s]{Funding: MATHEON project E.11}
\myThanks[t]{Funding: Berlin Mathematical School}
\myThanks[u]{Funding: Konsul Karl und Dr.~Gabriele Sandmann Stiftung}
\myThanks[v]{DFG Project KU 2740/2-1}
\date{\today}

\date{\today}

\abstract{
   The concepts of a conditional set, a conditional inclusion relation and a conditional
Cartesian product are introduced. The resulting conditional set theory is sufficiently
rich in order to construct a conditional topology, a conditional real and functional analysis indicating the possibility of a mathematical discourse based on conditional
sets. It is proved that the conditional power set is a complete Boolean algebra, 
and a conditional version of the axiom of choice, the ultrafilter lemma, Tychonoff's
theorem, the Borel-Lebesgue theorem, the Hahn-Banach theorem, the Banach-Alaoglu theorem and the Krein-\v{S}mulian theorem are shown. 
}

\keyWords{Conditional sets, conditional topology, conditional compactness, conditional functional analysis.}
\keyAMSClassification{03E70}

\maketitle

\section{Introduction}
Conditional set theory is an approach to study the local or dynamic behavior of structures whose local
or dynamic behavior is determined by the information encoded in a measure space, or more generally, in
a complete Boolean algebra. By constructing an analysis conditioned on a complete Boolean algebra, 
conditional set theory makes available analytic tools for this purpose. In the case of the associated measure 
algebra, it provides an alternative to measurable selection techniques, and extends the results in topological
$L^0$-modules obtained in Filipovi\'c et al.~\cite{kupper03} and Cheridito et al.~\cite{Cheridito2012}, 
initially motivated by dual representations of conditional risk measures \cite{kupper11}.

In the following, we briefly introduce conditional set theory. A \emph{conditional set} $\mathbf{X}$ is a collection of objects
$x|a$ for $x$ in a non-empty set $X$ and $a$ in a complete Boolean algebra $\mathcal{A}$ such that 
\begin{itemize}
        \item $a=b$ whenever $x|a=y|b$, 
        \item  $x|b=y|b$ implies $x|a=y|a$ for all $a,b \in\mathcal{A}$ with $a \leq b$, and   
        \item for any partition of unity $(a_i)$ in $\mathcal{A}$ and a family $(x_i)$ of elements in $X$ there exists exactly one element $x$ in $X$ such that $x|a_i=x_i|a_i$ for all $i$.  
\end{itemize} 
In order to introduce a conditional inclusion relation, it is necessary to specify conditional subsets of a
conditional set $\mathbf{X}$. A conditional subset of $\mathbf{X}$ is the collection of objects $\mathbf{Y}|b:= \{y|a\colon y \in Y,\, a \leq b\}$ for
some $b \in \mathcal{A}$ and some non-empty subset $Y$ of $X$ that is stable under pasting of its elements along partitions
of unity in $\mathcal{A}$. A conditional subset $\mathbf{Y}|b$ is a conditional set on the relative algebra $\mathcal{A}_b$. 
A conditional subset $\mathbf{Y}|b$ of $\mathbf{X}$ is conditionally included in another conditional subset $\mathbf{Z}|c$ if $\mathbf{Y}|b \subseteq \mathbf{Z}|c$. 
It can be shown that the collection of all conditional subsets of $\mathbf{X}$ together with the conditional inclusion relation forms a complete
Boolean algebra. The induced operations of conditional intersection, conditional union and conditional
complement conserve the structure of a conditional set, and satisfy the Boolean laws known from na\"ive set
theory. By giving meaning to a conditional Cartesian product, conditional relations and functions can be
defined as conditional subsets of the conditional product of two conditional sets, respectively.

A proof of a conditional version of a classical result is an adaptation of an existing classical proof. In
this adaptation process, it is helpful to recognize the following principles. The first principle is exhaustion
which establishes the largest condition $a$ for which a conditional property is satisfied. The second principle
is conditional negation which is stronger than classical negation. Conditional negation negates locally a
conditional property on all conditions $a > 0$. The third principle is localization. A conditional structure
on or a statement about a conditional set $\mathbf{X}$ can equivalently be stated on the conditional set $\mathbf{X}|b$ for any
$b < 1$ by passing from the complete Boolean algebra $\mathcal{A}$ to its complete relative algebra $\mathcal{A}_b$. In particular,
the restriction of a true statement about $\mathbf{X}$ to $\mathbf{X}|b$ remains true. The forth principle is bottom-up. It makes
a relation between a conditional concept on a conditional set $\mathbf{X}$ and its classical counterpart on the
underlying set $X$. For instance, we analyze this relation for the concepts of continuity and convergence in Section \ref{ch:topology}. 

Conditional set theory is closely related to the topos of sheaves over a complete Boolean algebra or 
Boolean-valued models of ZFC, respectively, see Mac Lane and Moerdijk \cite{lane1992sheaves} for an introduction to sheaves
in logic, see Bell \cite{bell2005set} and Kusraev and Kutateladze \cite{kusraev2012boolean} for an introduction to Boolean-valued models and
to Boolean-valued analysis, respectively, and see Jamneshan \cite{asgar13} for the connection of conditional sets to
sheaves and to Boolean-valued sets. Conditional set theory is an extension of the conditional analysis' results
for topological $L^0$-modules in \cite{Cheridito2012,DKKM13,kupper03,zapata14}. 
Conditional set operations on $(L^0)^d$ are introduced in Streckfu\ss \, \cite{martin11}. 
In this article, it is shown that $L^0$ is isomorphic to the conditional real numbers when the complete
Boolean algebra is the measure algebra associated to a $\sigma$-finite measure space. Hence the conditional analysis'
results obtained in $L^0$-theory are recovered in conditional set theory, and conditional set theory provides a
framework for further studies of stable $L^0$-modules. In Guo \cite{gou10}, conditional separation and duality results
for topological $L^0$-modules in \cite{kupper03} are related to the respective results in randomly normed modules, see
Haydon et al.~\cite{haydon1991randomly} and its references for an introduction to randomly normed spaces. In \cite{eisele2012,eisele13}, Eisele and
Taieb prove a conditional version of some classical theorems from functional analysis for modules over $L^\infty$.
Recently, a Hahn-Banach theorem for modules over Stonean algebras has been proved in Cerreia-Vioglio
et al.~\cite{kuppermaccheroni2014}. A conditional version of Mazur's lemma for $L^0$-modules is shown in Zapata-Garc\'ia \cite{zapata14}. Conditional
analysis is successfully applied to dynamic and conditional risk measures and decision theory in Filipovi\'c et
al.~\cite{kupper11}, Bielecki et al.~\cite{martinsamuel13}, Frittelli and Maggis \cite{frittelli14} and Jamneshan and Drapeau \cite{DJ14}, to backward stochastic
differential equations in Cheridito and Hu \cite{ch11} and Cheridito and Stadje \cite{cs12}, and to optimization problems
in equilibrium and principal-agent models in Horst et al.~\cite{kupper08} and Horst and Backhoff \cite{BH14}. 

This paper is organized as follows. Conditional sets, conditional set operations, conditional relations,
conditional families, conditional countability and a conditional axiom of choice are introduced in Section \ref{ch:set}.
In Section \ref{ch:topology}, conditional topological spaces and the concepts of conditional continuity, conditional convergence and conditional compactness are defined with the aim to prove a conditional version of Tychonoff's
theorem. In Section \ref{ch:real}, the conditional real line is constructed, and conditional metric spaces are defined
the conditional compactness of which is characterized by a conditional Borel-Lebesgue theorem. In Section
\ref{ch:vector}, conditional topological vector spaces are introduced and a conditional version of the Hahn-Banach theorem, the Banach-Alaoglu theorem and the Krein-\v{S}mulian theorem are shown. In the last two sections,
the main theorems are proved. A complete account can be found in Jamneshan \cite{jamneshan13} and Karliczek \cite{martindiss},
respectively.

\section{Conditional set theory}\label{ch:set}

Let $\mathcal{A}=(\mathcal{A},\wedge,\vee,{}^c,0,1)$ be a complete Boolean algebra. 
Examples are the power set algebra of some set, the measure algebra associated to a $\sigma$-finite measure space $(\Omega,\mathcal{F},\mu)$,\footnote{The associated measure algebra is the quotient Boolean algebra of $\mathcal{F}$ by the $\sigma$-ideal of $\mu$-null sets, see \cite[p.~233, Example 14.27]{monk1989handbook}} the quotient algebra $\mathcal{B}/\mathcal{I}$ where $\mathcal{B}$ is the Borel $\sigma$-algebra of a separable metric space and $\mathcal{I}$ the $\sigma$-ideal of meager sets, see \cite[p.~182, Proposition 12.9]{monk1989handbook}, and the Boolean algebra of all projective bands of a Riesz space, see \cite[Volume 3, p.~232, Theorem 352Q]{fremlin2000measure}.  
Recall that $\mathcal{A}$ together with the relation $a \leq b$ whenever $a \wedge b = a$ is a complete complemented distributive lattice. 
The relative algebra of $\mathcal{A}$ with respect to some $a\in\mathcal{A}$ is denoted by $\mathcal{A}_a:=\set{b\in\mathcal{A}: b\leq a}$. 
For a family $(a_i)=(a_i)_{i\in I}$ of elements in $\mathcal{A}$, its supremum is denoted by $\vee a_i=\vee_{i\in I} a_i$ and its infimum by $\wedge a_i=\wedge_{i\in I} a_i$.\footnote{As usual, we apply the conventions $\vee_\emptyset =0$ and $\wedge_\emptyset=1$.}
A partition of $a\in\mathcal{A}$ is a family $(a_i)$ of elements of $\mathcal{A}$ such that $a_i\wedge a_j=0$ whenever $i\neq j$ and $\vee a_i =a$.
Denote by $p(a)$ the set of all partitions of $a$.  
For every family $(a_i)_{i\in I}$ of elements in $\mathcal{A}$ there exists $(b_i)_{i\in I}\in p(\vee a_i)$ such that $b_i\leq a_i$ for all $i\in I$. Indeed, by the well-ordering theorem there exists a well-ordering on the index $I$, and define $b_i:=a_i\wedge (\vee_{j<i} b_j)^c$ for each $i\in I$. 
Equalities and inequalities between measurable functions are always understood in the almost sure sense whenever a measure is fixed. 
\begin{definition}\label{def:condset}
    A \emph{conditional set} $\mathbf{X}$ of a non-empty set $X$ and a complete Boolean algebra $\mathcal{A}$  is a collection of objects $x|a$ for $x\in X$ and $a\in\mathcal{A}$ such that 
    \begin{enumerate}[label=(C\arabic*)]
        \item\label{cond1} if $x|a=y|b$, then $a=b$;\footnote{In the first version of this paper, the assumption "identity" was required.
                This condition was dropped during the revision process, and we are grateful to Jos\'e Miguel Zapata Garc\'ia for pointing out to us the lack of this condition in the current version and suggesting the appropriate replacement.}
        \item\label{cond2} if $x,y\in X$ and $a,b \in\mathcal{A}$ with $a \leq b$, then $x|b=y|b$ implies $x|a=y|a$; 
        \item\label{cond3} if $(a_i)\in p(1)$ and $(x_i)$ is a family of elements in $X$, then there exists exactly one element $x\in X$ such that $x|a_i=x_i|a_i$ for all $i$.  
    \end{enumerate} 
    Condition \ref{cond2} is called \emph{consistency} and \ref{cond3} is named \emph{stability}. 
    For $(a_i)\in p(1)$ and a family $(x_i)$ of elements in $X$, the unique element $x\in X$ such that $x|a_i=x_i|a_i$ for all $i$ is the \emph{concatenation} of the family $(x_i)$ along the partition $(a_i)$, and denoted by $\sum x_i|a_i$. 
    For finite partitions, the concatenation is denoted by $x_{1}|a_1 + \ldots + x_{n}|a_n$.
\end{definition}
\begin{remark}
    Let $x,y \in X$ and $a \in \mathcal{A}$ such that $x|1=y|a$.
    Then it follows from \ref{cond1} that $a=1$ and from \ref{cond3} that $x=y$.
    In particular, $X$ is in bijection with $\set{x|1: x \in X}$.
    Furthermore, it follows from \ref{cond3} that $x|0=y|0$ for every $x,y \in X$.
    In particular, $\set{x|0: x \in X}$ consists of one element.
\end{remark}
\begin{examples}\label{ex:L0}
    \begin{enumerate}[fullwidth, label=\arabic*)]  
        \item Every conditional set can be identified with the collection of equivalence classes on the product $X\times \mathcal{A}$ for the equivalence relation $(x,a)\sim(y,b)$ whenever $x|a=y|b$.
        \item Let $\mathcal{A}=\set{0,1}$ be the trivial algebra and $X$ a non-empty set.
            The collection $\mathbf{X}$ of objects $x|1=x$ and $x|0=X\times \set{0}$ for all $x\in X$, is a conditional set.  
        \item Let $\mathcal{A}$ be a complete Boolean algebra.
            Then $\mathbf{X}=\mathcal{A}\times \mathcal{A}/_{\mathop \sim}$ where $(a,b)\sim (c,d)$ whenever $a\wedge b=c\wedge d$ and $b=d$, is a conditional set with equivalence classes $a|b$.
        \item\label{ex:l0} 
            Let $(\Omega,\mathcal{F},\mu)$ be a $\sigma$-finite measure space, $\mathcal{A}$ the associated measure algebra and $L^0$ the set of all equivalence classes of measurable functions $X:\Omega \to \mathbb{R}$ which coincide $\mu$-almost everywhere.
            Denote the equivalence classes in $\mathcal{F}$ by $a=[A]$ and the equivalence classes in $L^0$ by $x=[X]$.
            The collection $\mathbf{L}^0$ of objects $x|a=\set{y \in L^0: Y1_A=X1_A}$ is a conditional set.
        \item Conditional set of step functions: Let $\mathcal{A}$ be a complete Boolean algebra and $E$ a non-empty set.
            We consider the collection of all families $(x_i,a_i)$ of elements in $E\times \mathcal{A}$ where $(a_i)\in p(1)$.
            On this collection we define the equivalence relation $(x_i,a_i)\sim (y_j,b_j)$ if $\vee \set{a_i:x_i=z}=\vee \set{b_j:y_j=z}$ for all  $z \in E$, and we denote by $X$ the respective set of equivalence classes $[x_i,a_i]$.
            Inspection shows that we can make $X$ into a conditional set $\mathbf{X}$ by considering the collection of objects
            \begin{equation*}
                [x_i,a_i]|a:=\left\{ [y_j,b_j] \in X\colon \vee \set{a_i\colon x_i=z}\wedge a=\vee \set{b_j\colon y_j=z}\wedge a \text{ for all }z\in E\right\}.
            \end{equation*}
            This construction can be seen as the conditional set of step functions on $\mathcal{A}$ with values in $E$.
            Indeed, each $[x,1]$ can be uniquely identified with $x \in E$.
            Since by stability, elements in $X$ can be written as $\sum [x_i,1]|a_i$ it justifies the notation $\sum x_i|a_i$ for the elements of $X$ that can be interpreted as the step function taking the value $x_i\in E$ on $a_i$.
            
            In case that $E$ is either $\mathbb{N}$ or $\mathbb{Q}$ we denote the respective conditional set of step functions by $\mathbf{N}$ or $\mathbf{Q}$, and call them the \emph{conditional natural numbers} and \emph{conditional rational numbers}, respectively.
            The corresponding generating sets are denoted by $N=\{\sum n_i|a_i\colon (a_i) \in p(1), (n_i)\text{ is a family in }\mathbb{N}\}$ and $Q=\{\sum q_i|a_i\colon (a_i) \in p(1), (q_i)\text{ is a family in }\mathbb{Q}\}$.
    \end{enumerate}
\end{examples}
\begin{proposition}\label{rule02}
    Let $\mathbf{X}$ be a conditional set. 
    \begin{enumerate}[label=\textit{(\roman*)}]
        \item For all $(a_i),(b_j)\in p(1)$ and families $(x_{ij})$ of elements in $X$, it holds 
            \begin{equation*}
                \sum_j \Big(\sum_i x_{ij}|a_i\Big)|b_j = \sum_{i,j} x_{ij}|a_i\wedge b_j. 
            \end{equation*}
        \item For all $b\in\mathcal{A}$, $(a_i)\in p(b)$ and families $(x_i)$ of elements in $X$, there exists $x\in X$ such that $x|a_i = x_i|a_i$ for all $i$, and if $y\in X$ is such that $y|a_i = x_i|a_i$ for all $i$, then $x|b = y|b$.\footnote{Whenever there is no risk of confusion, we denote by $x|b=\sum x_i|a_i$.}
    \end{enumerate}
\end{proposition}  
\begin{proof}
    \begin{enumerate}[label=\textit{(\roman*)},fullwidth]
        \item Denote by $y_j:=\sum x_{ij}|a_i$ for each $j$ and $y:=\sum y_j|b_j$. 
            Since $y|b_j=y_j|b_j$, one has $y|a_i\wedge b_j=y_j|a_i\wedge b_j$ by consistency. 
            Similarly, it follows from $y_j|a_i=x_{ij}|a_i$ that $y_j|a_i\wedge b_j=x_{ij}|a_i\wedge b_j$. 
            Hence, $y|a_i\wedge b_j=x_{ij}|a_i\wedge b_j$ for all $i,j$, and thus $y=\sum x_{ij}|a_i\wedge b_j$. 
        \item Let $x,y\in X$ be such that $x|a_i=x_i|a_i$ for all $i$ and $x|b^c=w|b^c$, and $y|a_i=x_i|a_i$ for all $i$ and $y|b^c=z|b^c$ for some $w,z\in X$. 
            Let $v:=y|b + w|b^c$. 
            Since $v|b=y|b$, it holds $v|a_i=y|a_i$ for all $i$ due to consistency.  
             Since $y|a_i=x_i|a_i=x|a_i$ for all $i$, one has $v=x$ by stability. 
            By consistency, $x|b=v|b=y|b$. 
    \end{enumerate}
\end{proof}
\begin{definition} 
    Let $\mathbf{X}$ be a conditional set.
    A subset $Y$ of $X$ is called \emph{stable} if it is non-empty and 
    \begin{equation*}
        Y=\Set{\sum y_i|a_i: (a_i)\in p(1), (y_i)\text{ is a family of elements in } Y}.
    \end{equation*}
    Denote by $S(\mathbf{X})$ the set of all $Y\subseteq X$ which are stable.
\end{definition}
For every non-empty $Y\subseteq X$, the collection of those $\sum y_i|a_i$, where $(a_i)\in p(1)$ and $(y_i)$ is a family  of elements in $Y$, is the smallest stable set containing $Y$ due to Proposition \ref{rule02},  and is referred to as the \emph{stable hull} 
$\con(Y)$ of $Y$.
\begin{examples}\label{ex:sigmastable} 
    \begin{enumerate}[fullwidth, label=\arabic*)]
        \item For a conditional set $\mathbf{X}$ of a non-empty set $X$ and the trivial algebra $\mathcal{A}=\set{0,1}$, any non-empty subset of $X$ is stable.
        \item For any conditional set $\mathbf{X}$, every singleton $\set{x}$, $x \in X$, is stable. 
        \item For the conditional set $\mathbf{L^0}$, the set $[x,y]:=\set{z\in L^0: X\leq Z\leq Y}$ is stable for any $x,y\in L^0$ with $X\leq Y$. Further examples of stable subsets of $L^0$ can be found in \cite{kuppermaccheroni2014,Cheridito2012,DKKM13,kupper03}.  
    \end{enumerate}
\end{examples}  
By stability, every $Y \in S(\mathbf{X})$ generates a conditional set $\mathbf{Y}:=\set{y|a:y\in Y,a\in\mathcal{A}}$.
\begin{definition}
    Let $\mathbf{X}$ be a conditional set.
    Let $P(\mathbf{X})$ be the collection of all conditional sets $\mathbf{Y}$ generated by $Y \in S(\mathbf{X})$ and define
    \begin{equation*}
        \mathbf{P}(\mathbf{X}):=\left\{\mathbf{Y}|a=\left\{ y|b\colon y \in Y, b\leq a \right\}\colon \mathbf{Y} \in P(\mathbf{X}),a \in \mathcal{A}\right\},
    \end{equation*}
    which we call the \emph{conditional power set} of $\mathbf{X}$.
\end{definition}
Inspection shows that $\mathbf{P}(\mathbf{X})$ is a conditional set of $P(\mathbf{X})$.
Note that every element $\mathbf{Y}|a$ in the conditional power set of $\mathbf{X}$ is itself a conditional set of $Y|a:=\{y|a\colon y \in Y\}$ and the relative algebra $\mathcal{A}_a$, for the conditioning $(y|a)|b:=y|b$, $b\leq a$.
\begin{definition}
    Let $\mathbf{X}$ and $\mathbf{Y}$ be two conditional sets of $X, \mathcal{A}$ and $Y,\mathcal{B}$ respectively.
    We say that $\mathbf{Y}$ is \emph{conditionally included} in $\mathbf{X}$ and write $\mathbf{Y}\sqsubseteq \mathbf{X}$ if $\mathcal{B}=\mathcal{A}_a$ for some $a \in \mathcal{A}$ and $\mathbf{Y}=\mathbf{Z}|a$ for some $\mathbf{Z} \in P(\mathbf{X})$.
    We say that $\mathbf{Y}$ is a \emph{conditional subset} of $\mathbf{X}$ \emph{on} $a$.
\end{definition}
Depending on the context, we write $\mathbf{Y}$ or $\mathbf{Y}|a$ for a conditional subset of $\mathbf{X}$ on $a$.
Note that 
\begin{equation*}
  \mathbf{P}(\mathbf{X})=\left\{\mathbf{Y}\sqsubseteq \mathbf{X}\colon \mathbf{Y}\text{ conditional set}\right\}.
\end{equation*}
By inspection, $\sqsubseteq$ is a partial order on $\mathbf{P}(\mathbf{X})$ with greatest element $\mathbf{X}=\mathbf{X}|1$ and least element $\mathbf{X}|0$.
Every singleton $\{x\}$, $x \in X$, is stable and defines a conditional set $\mathbf{x}:=\{x|a\colon a \in \mathcal{A}\}$ called a \emph{conditional element of} $\mathbf{X}$.\footnote{Although the conditional elements $\mathbf{x}$ of $\mathbf{X}$ are formally sets, they are atoms among the conditional subsets of $\mathbf{X}$ on $1$, and are in one-to-one relation with $x \in X$. In the case that $\mathbf{Y}$ is a conditional set on $a$, a conditional element $\mathbf{y}$ of $\mathbf{Y}$ is on $a$.}
Denote by $\cond(Y)\in P(\mathbf{X})$ the conditional set generated by the stable hull of some non-empty subset $Y\subseteq X$.
\begin{theorem}\label{thm:powerset}
    Let $\mathbf{X}$ be a conditional set. 
    Then $(\mathbf{P}(\mathbf{X}),\sqsubseteq)$ is a complete complemented distributive lattice. 
\end{theorem}
\begin{proof}  
    First we prove completeness, second complementation and third distributivity.   
    \begin{enumerate}[label=\textit{Step \arabic* :},fullwidth, ref=\textit{Step \arabic*}]
        \item \label{step1} Let $(\mathbf{Y}_i|a_i)$ be a non-empty family of conditional subsets of $\mathbf{X}$. 
            First we construct the supremum of $(\mathbf{Y}_i|a_i)$ and second its infimum. 
            Fix $z\in X$, and let 
            \begin{equation*}
                Y=\Set{\sum y_i|b_i + z|\wedge a_i^c: (b_i)\in p(\vee a_i)\text{ with } b_i \leq a_i, y_i \in Y_i \text{ for each }i}   
            \end{equation*} 
            where we used the well-ordering theorem to find a partition $(b_i)\in p(\vee a_i)$ with $b_i\leq a_i$ for all $i$, and Proposition \ref{rule02} to construct each of the concatenations $\sum y_i|b_i + z|\wedge a_i^c$. 
            We want to show that $Y$ is stable.  
            To this end, let $(c_j) \in p(1)$ and $(x_j)$ be a family of elements in $Y$ where $x_j=\sum y_{ij}|b_{ij}+z|\wedge a_i^c$ for each $j$.  
            For each $i$, define $b_i=\vee b_{ij}\wedge c_j$ and $y_i=\sum y_{ij}|c_j$. 
            Inspection shows that $(b_i)\in p(\vee a_i)$ with $b_i\leq a_i$.
            By stability, one has $y_i\in Y_i$ for each $i$. 
            From Proposition \ref{rule02} it follows that
            \begin{equation*}
                \sum x_j|c_j=\sum y_i|b_i + z|\wedge a_i^c\in Y.   
            \end{equation*}
            With $\mathbf{Y}$ being the conditional set generated by the stable set $Y$, we show that 
            \begin{equation}\label{eq:condunion}
                \sqcup \mathbf{Y}_i|a_i:=\mathbf{Y}|\vee a_i 
            \end{equation}
            is the supremum of $(\mathbf{Y}_i|a_i)$.  
            Indeed, for any $i$, $y_i\in Y_i$ and some arbitrary $y\in Y$, it holds $w:=y_i|a_i + y|a_i^c\in Y$ due to Proposition \ref{rule02}. 
            Since $w|a_i=y_i|a_i$, it follows that $\mathbf{Y}|\vee a_i$ is an upper bound due to consistency. 
            For any other upper bound $\mathbf{W}|c$, it must hold $a_i\leq c$ for each $i$, and therefore $\vee a_i\leq c$. 
            Moreover, for all $i$ and every $y_i\in Y_i$ there is $w\in W$ such that $y_i|a_i=w|a_i$.  
            By Proposition \ref{rule02}, for all $y\in Y$ there is $w\in W$ with $y|\vee a_i=w|\vee a_i$. 
            By consistency, $\mathbf{Y}|\vee a_i$ is the least upper bound. 

            We want to show that there exists an infimum of $(\mathbf{Y}_i|a_i)$. 
            Let 
            \begin{equation*}
                M=\set{a: a\leq \wedge a_i, \text{ there exists }x\in X \text{ such that for all $i$ there is }y_i\in Y_i\text{ with }x|a=y_i|a},
            \end{equation*}
            and $b=\vee M$.
            We show that $b$ is attained. 
            Let $(c_j)$ be a family of elements in $M$ with $\vee c_j=b$, that is for each $j$ there exists $x_j\in X$ and for all $i$ there exists $y_{ij}\in Y_i$ with $x_j|c_j=y_{ij}|c_j$.  
            By the well-ordering theorem, there is $(d_j)\in p(b)$ such that $d_j\leq c_j$ for all $j$. 
            By consistency, $d_j\in M$ for each $j$. 
            By Proposition \ref{rule02}, one has $\sum x_j|d_j=\sum y_{ij}|d_j$ for all $i$, and thus $b\in M$. 
            Next we show that 
            \begin{equation*}
                Y:=\set{x\in X: \text{ for all $i$ there exists } y_i\in Y_i \text{ with } x|b=y_i|b} 
            \end{equation*}
            is stable. 
            To this end, let $(c_j)\in p(1)$ and $(x_j)$ be a family of elements in $Y$. 
            For all $j$ and every $i$ there is $y_{ij}\in Y_i$ with $x_j|b=y_{ij}|b$. 
            Set $y_i=\sum y_{ij}|c_j$ for each $i$. 
            By Proposition \ref{rule02}, it holds $y_i|b=x_j|b$, and thus $(\sum x_j|c_j)|b=y_i|b$. 
            By construction, 
            \begin{equation}\label{eq:intersection}
                \sqcap \mathbf{Y}_i|a_i:=\mathbf{Y}|b
            \end{equation}
            is a lower bound of $(\mathbf{Y}_i|a_i)$. 
            For any other lower bound $\mathbf{W}|c$, it holds $c\leq a_i$ for all $i$. 
            Moreover, for all $w\in W$ and every $i$ there exists $y_i\in Y_i$ such that $w|c=y_i|c$.  
            Therefore $c\leq b$. 
            By consistency, it holds $\set{w|d:w\in W,d\leq c}\subseteq \set{y|d:y\in Y, d\leq c}$. 
            Thus $\sqcap \mathbf{Y}_i|a_i$ is the greatest lower bound of $(\mathbf{Y}_i|a_i)$. 
        \item We want to show that for all conditional subsets $\mathbf{Y}|a$ of $\mathbf{X}$ it holds 
            \begin{equation*}
                \mathbf{Y}|a\sqcap (\mathbf{Y}|a)^\sqsubset =\mathbf{X}|0 \quad \text{and} \quad \mathbf{Y}|a\sqcup (\mathbf{Y}|a)^\sqsubset =\mathbf{X}|1, 
            \end{equation*}
            where 
            \begin{equation}\label{eq:complement}
                (\mathbf{Y}|a)^\sqsubset:=\sqcup \Set{\mathbf{Z}|c\in\mathbf{P}(\mathbf{X}): \mathbf{Z}|c\sqcap \mathbf{Y}|a=\mathbf{X}|0} 
            \end{equation}
            is the complement. 
            By completeness which has been proved in \ref{step1}, it holds $(\mathbf{Y}|a)^\sqsubset \in \mathbf{P}(\mathbf{X})$.
            Suppose $(\mathbf{Y}|a)^\sqsubset$ is of the form $\mathbf{W}|b$ for some $\mathbf{W}\in P(\mathbf{X})$ and $b\in\mathcal{A}$. 

            As for the first statement, by way of contradiction, we may assume that $\mathbf{Y}|a\sqcap \mathbf{W}|b=\mathbf{Z}|c$ for some $c>0$.  
            Thus there exists $y\in Y$ and $w\in W$ such that $y|c=w|c$. 
            However, this implies $\mathbf{y}|c$ satisfies $\mathbf{y}|c\sqcap \mathbf{Y}|a\neq \mathbf{X}|0$ which is contradictory.   
            Hence $c=0$, and thus $\mathbf{Y}|a\sqcap \mathbf{W}|b=\mathbf{X}|0$.  

            As for the second statement, we may assume that $\mathbf{Y}|a\neq \mathbf{X}|1$, since otherwise $\mathbf{Y}|a\sqcup \mathbf{W}|b=\mathbf{X}|1\sqcup \mathbf{X}|0=\mathbf{X}|1$. 
            Suppose $\mathbf{Y}|a\sqcup \mathbf{W}|b=\mathbf{Z}|a\vee b$ for some $\mathbf{Z}\in P(\mathbf{X})$, and let $x\in X$. 
            Then $\mathbf{x}\sqcap \mathbf{Y}|a=\mathbf{y}|c$ for some $y\in Y$ and $c=\vee\set{a^\prime: a^\prime \leq a, \text{ there exists } y\in Y \text{ with } x|a^\prime=y|a^\prime}$ and $\mathbf{x}\sqcap \mathbf{W}|b=\mathbf{w}|d$ for some $w\in W$ and $d=\vee\set{b^\prime: b^\prime \leq b, \text{ there exists } w\in W \text{ with } x|b^\prime=w|b^\prime}$. 
            Since $\mathbf{Y}|a\sqcap \mathbf{W}|b=\mathbf{X}|0$, it must hold $c\wedge d=0$ and $c\vee d=1$.\footnote{By inspection, the negation of either of the two conditions leads immediately to a contradiction to the construction of the infimum $\sqcap$ and the complement ${}^\sqsubset$.}  
            It follows from the construction of the supremum $\sqcup$ that $a\vee b=1$ and $x=x|c + x|d\in Z$. 
            Thus $\mathbf{Z}|a\vee b=\mathbf{X}|1$. 
        \item Let $\mathbf{Y}_k|a_k\in \mathbf{P}(\mathbf{X})$ for $k=1,2,3$. 
            Since it has already been shown that $(\mathbf{P}(\mathbf{X}),\leq)$ is a lattice, both distributive laws are equivalent, see \cite[p.~15, Lemma 1.17]{monk1989handbook}. 
            It remains to prove that 
            $$(\mathbf{Y}_1|a_1\sqcap \mathbf{Y}_2|a_2)\sqcup (\mathbf{Y}_1|a_1\sqcap \mathbf{Y}_3|a_3) = \mathbf{Y}_1|a_1 \sqcap (\mathbf{Y}_2|a_2\sqcup \mathbf{Y}_3|a_3).$$
            On the one hand, in every lattice it holds $$(\mathbf{Y}_1|a_1\sqcap \mathbf{Y}_2|a_2)\sqcup (\mathbf{Y}_1|a_1\sqcap \mathbf{Y}_3|a_3) \leq \mathbf{Y}_1|a_1 \sqcap (\mathbf{Y}_2|a_2\sqcup \mathbf{Y}_3|a_3).$$

            On the other hand, suppose $(\mathbf{Y}_1|a_1\sqcap \mathbf{Y}_2|a_2)\sqcup (\mathbf{Y}_1|a_1\sqcap \mathbf{Y}_3|a_3)=\mathbf{V}|c$ for some $\mathbf{V}\in P(\mathbf{X})$ and $c\in\mathcal{A}$. 
            Without loss of generality, we may assume that $\mathbf{Y}_1|a_1 \sqcap (\mathbf{Y}_2|a_2\sqcup \mathbf{Y}_3|a_3)$ is of the form $\mathbf{W}|1$ for some $\mathbf{W}\in P(\mathbf{X})$.  
            By the construction of the infimum $\sqcap$, this immediately implies $a_1=1$ and $\mathbf{Y}_2|a_2\sqcup \mathbf{Y}_3|a_3=\mathbf{Z}|1$ for some $\mathbf{Z}\in P(\mathbf{X})$. 
            Moreover, for every $w\in W$ there are $y\in Y_1$ and $z\in Z$ such that $w|1=y|1=z|1$.    
            By the construction of the supremum $\sqcup$, there exists $(b,b^c)\in p(1)$ with $b\leq a_2$ and $b^c\leq a_3$, and $v\in Y_2$ and $u\in Y_3$ such that $z=v|b+ u|b^c$.  
            By consistency, $w|b=y|b=z|b=v|b$ and $w|b^c=y|b^c=z|b^c=u|b^c$. 
            Since $w=w|b+ w|b^c$, it follows $w\in V$ and $c=1$.   
            Thus $\mathbf{W}|1\sqsubseteq \mathbf{V}|1$ which finishes the proof. 
    \end{enumerate}
\end{proof}
The operations $\sqcup, \sqcap$ and ${}^\sqsubset$, given in \eqref{eq:condunion}, \eqref{eq:intersection} and \eqref{eq:complement}, are named \emph{conditional union, intersection} and \emph{complement}, respectively. 
As a consequence of standard results on complete complemented distributive lattices and Boolean algebras, see \cite[p.~14]{monk1989handbook}, we have: 
\begin{corollary}
    For every conditional set $\mathbf{X}$,
    \begin{equation*}
        \mathbf{P}(\mathbf{X})=(\mathbf{P}(\mathbf{X}),\sqcup,\sqcap,{}^\sqsubset, \mathbf{X}|0,\mathbf{X})   
    \end{equation*}
    is a complete Boolean algebra.
\end{corollary}
\begin{remark}
    The complete Boolean algebra $\mathbf{P}(\mathbf{X})$ is atomic if, and only if, $\mathcal{A}$ is atomic. 
    Indeed, let $A$ be the set of atoms of $\mathcal{A}$. 
    Then the set of atoms of $\mathbf{P}(\mathbf{X})$ is $\set{\mathbf{x}|a: a \in A, x\in X}$.
    Conversely, if $\mathcal{A}$ is atomless then for each $a >0 $ there exists $0<b<a$ such that $\mathbf{x}|b \sqsubseteq \mathbf{x}|a$ and $\mathbf{x}|b\neq \mathbf{x}|a$ for all $x\in X$.
    Analogously, one can verify that the distributivity law of $\mathbf{P}(\mathbf{X})$ coincides with the distributive law of $\mathcal{A}$. 
\end{remark} 
\begin{corollary}
    Let $\mathbf{X}$ be a conditional set.  
    \begin{enumerate}[label=(\roman*)]
        \item \emph{De Morgan's law}: For any non-empty family $(\mathbf{Y}_i)$ of conditional subsets of $\mathbf{X}$, it holds $$(\sqcup \mathbf{Y}_i)^\sqsubset=\sqcap (\mathbf{Y}_i)^\sqsubset.$$  
        \item \emph{Distributivity}: For any non-empty family $(\mathbf{Y}_{ij})_{i\in I, \, j\in J}$ of conditional subsets of $\mathbf{X}$ where $J$ is arbitrary and $I$ is finite, it holds $$\sqcap_{i\in I} \sqcup_{j\in J} \mathbf{Y}_{ij} = \sqcup\Set{\sqcap_{i\in I}\mathbf{Y}_{if(i)}: f\in J^I}.$$ 
        \item \emph{Associativity}: For any non-empty family $(\mathbf{Y}_{ij})_{i\in I, \, j\in J}$ of conditional subsets of $\mathbf{X}$ where $I,J$ are arbitrary, it holds $$\sqcup_{i\in I} (\sqcup_{j\in J} \mathbf{Y}_{ij})=\sqcup_{i\in I,\, j\in J} \mathbf{Y}_{ij}.$$  
    \end{enumerate} 
\end{corollary} 
\begin{proof}
    All three properties are satisfied in every complete Boolean algebra, see \cite[p.~22, Lemma 1.33]{monk1989handbook}. 
\end{proof}
\begin{lemma}\label{lem:1fcunion}
    Let $\mathbf{X}$ be a conditional set.
    \begin{enumerate}[label=(\roman*)]
        \item For $\mathbf{Y}_1,\mathbf{Y}_2 \sqsubseteq \mathbf{X}$ such that $\mathbf{Y}_1\sqcap \mathbf{Y}_2$ is on $1$, one has $Y_1\cap Y_2\neq \emptyset$. 
        \item For $\mathbf{Y}_1,\mathbf{Y}_2\sqsubseteq \mathbf{X}$ on $1$ such that $\mathbf{Y}_1\sqsubseteq \mathbf{Y}_2$, one has $\mathbf{Y}_1\subseteq \mathbf{Y}_2$ and $Y_1\subseteq Y_2$. 
        \item Let $(\mathbf{Y}_i)$ be a stable collection of conditional subsets of $\mathbf{X}$ each on $1$. Then one has $\sqcup \mathbf{Y}_i=\mathbf{W}$ where $W=\cup Y_i$. 
    \end{enumerate}
\end{lemma}
\begin{proof}
    The first two statements are immediate from the definitions. 
    As for the third one, let 
    \begin{equation*}
        W=\Set{\sum y_j|a_j: (a_j)\in p(1), y_j\in Y_{i_j} \text{ for some }i_j}.  
    \end{equation*}
    It follows that $\cup Y_i\subseteq W$.
    Conversely, let $y=\sum y_j|a_j \in W$, so that $y \in \sum Y_{i_j}|a_j$.
    By stability, $\sum Y_{i_j}|a_j=Y_{i_k}$ for some $i_k$ and therefore $y \in \cup Y_i$ showing that $W=\cup Y_i$.
    By definition of the conditional union, it holds $\sqcup \mathbf{Y}_i=\mathbf{W}$. 
\end{proof}

\subsection{Conditional relation and function}

\begin{definition}\label{d:product}
    Let $I$ be a non-empty index set and $\mathbf{X}_i$ be a conditional set for each $i\in I$. 
    The collection of objects $(x_i)_{i\in I}|a:=(x_i|a)_{i\in I}$ for $(x_i)_{i\in I}\in \prod_{i\in I} X_i$ and $a\in\mathcal{A}$ is called the \emph{conditional product} of the $\mathbf{X}_i$, and is denoted by $ \prod_{i\in I} \mathbf{X}_i$.
\end{definition}
Note that the conditional product is a conditional set.
For a conditional set $\mathbf{X}$ and a non-empty index set $I$, we write $\mathbf{X}^I:=\prod_{i\in I}\mathbf{X}$ and $\mathbf{X}^n:=\prod_{1\leq k\leq n} \mathbf{X}$ for any $n\in\mathbb{N}$. 
\begin{definition}
    Let $\mathbf{X}$ and $\mathbf{Y}$ be conditional sets. 
    A \emph{conditional binary relation} on $\mathbf{X}\times \mathbf{Y}$ is a conditional subset $\mathbf{T}$ of $\mathbf{X}\times \mathbf{Y}$ on $1$.\footnote{A conditional binary relation $\mathbf{T}$ is a classical relation $T\subseteq X\times Y$ such that $\sum (x_i,y_i)|a_i=(\sum x_i|a_i,\sum y_i|a_i)\in T$ for all $(a_i)\in p(1)$ and every family $(x_i,y_i)$ of elements in $T$.}
    We say that $\mathbf{x}|a$ and $\mathbf{y}|a$ for two conditional elements $\mathbf{x},\mathbf{y}$ of $\mathbf{X}\times \mathbf{Y}$ are in relation if $(x|a,y|a) \in \mathbf{T}$ and write $\mathbf{x}|a\,\mathbf{T}\,\mathbf{y}|a$.
    A conditional binary relation $\mathbf{T} $ on $\mathbf{X}\times\mathbf{X}$ is \emph{conditionally antisymmetric, reflexive, symmetric} or \emph{transitive} whenever $T$ is antisymmetric, reflexive, symmetric or transitive in the classical sense.   
    A \emph{conditional partial order} $\leqslant$ is a conditionally antisymmetric, reflexive and transitive binary relation. 
    Given a conditional partial order $\leqslant$, we define $\mathbf{x}|a < \mathbf{y}|a$ if $\mathbf{x}|a\leqslant \mathbf{y}|a$ and $\mathbf{x}|b\neq \mathbf{y}|b$ for every $0<b\leq a$.\footnote{By consistency, if $\mathbf{x}|a \leqslant \mathbf{y}|a$ it holds $\mathbf{x}|b\leqslant \mathbf{y}|b$ for every $b\leq a$.}
    A conditional partial order $\leqslant$ is \emph{conditionally total} if for every $x,y \in X$ there exists $(a,b,c)\in p(1)$ such that $\mathbf{x}|a< \mathbf{y}|a$, $\mathbf{y}|b< \mathbf{x}|b$ and $\mathbf{x}|c =\mathbf{y}|c$. 
    A \emph{conditional equivalence relation} is a conditionally symmetric, reflexive and transitive binary relation.
\end{definition}
Conditional extrema of a conditionally partially ordered set are defined as classical extrema.
For instance, a conditional direction is a conditionally reflexive and transitive binary relation with the property that every pair $x,y\in X$ has an upper bound.
\begin{examples}\label{ex:order}
    \begin{enumerate}[fullwidth,label=\arabic*)]
        \item On $Q$ define $\sum p_i|a_i\leq \sum q_j|b_j$ whenever $p_i\leq q_j$ for all $i,j$ with $a_i\wedge b_j>0$. 
            It is immediate from the definition, that the previous binary relation is a stable subset of $Q\times Q$. 
            The related conditional relation $\leqslant$ on the conditional rational numbers $\mathbf{Q}$ is a conditionally total partial order.  
        \item On $L^0$ define $x\leq y$ whenever $X\leq Y$. 
            By inspection, this binary relation is a stable subset of $L^0\times L^0$. 
            The related conditional relation $\leqslant$ on $\mathbf{L^0}$ is a conditionally total partial order.     
    \end{enumerate}
\end{examples} 
\begin{definition}\label{def:function}
    Let $\mathbf{X}$ and $\mathbf{Y}$ be conditional sets. 
    A function $f:X\to Y$ is \emph{stable} if 
    \begin{equation*}
        f\Big(\sum x_i|a_i\Big)=\sum f(x_i)|a_i, \quad \text{ for all }(a_i)\in p(1) \text{ and every family }(x_i)\text{ of elements in } X.
    \end{equation*}
    A conditional subset $\mathbf{G}_{\mathbf{f}}$ of $\mathbf{X}\times\mathbf{Y}$ on $1$ is the \emph{graph of a conditional function} $\mathbf{f}:\mathbf{X}\to\mathbf{Y}$ whenever $G_f$ is the graph of a stable function $f:X\to Y$. 
    
    For $\mathbf{U}|b\sqsubseteq \mathbf{X}$, the \emph{conditional image} $\mathbf{f}(\mathbf{U}|b)$ is defined as $\mathbf{Z}|b\sqsubseteq \mathbf{Y}$ where $Z=\set{f(x)\colon x\in U}$. For $\mathbf{V}|c\sqsubseteq \mathbf{Y}$, the \emph{conditional preimage} $\mathbf{f}^{-1}(\mathbf{V}|c)$ is defined as $\mathbf{W}|d\sqsubseteq \mathbf{X}$ where\footnote{The stability of $W$ follows from the stability of $f$. Note that $d=c$ whenever $\mathbf{f}(\mathbf{X}|c)=\mathbf{Y}|c$.}
    \begin{align*}
       d&=\vee\set{a\colon a\leq c, \text{ there are }x\in X, y\in V \text{ such that } f(x)|a=y|a}  \\
       W&=\set{x\in X\colon \text{there is } y\in V \text{ with } f(x)|d=y|d}.
    \end{align*}
    A conditional function $\mathbf{f}:\mathbf{X}\to\mathbf{Y}$ is \emph{conditionally injective} if $x,x^\prime\in X$ with $x|a\neq x^\prime|a$ for all $a\neq 0$ implies $f(x)|a\neq f(x^\prime)|a$ for all $a\neq 0$; it is \emph{conditionally surjective} if $f$ is surjective; and it is \emph{conditionally bijective} if it is conditionally injective and surjective. 
\end{definition}
\begin{examples}\label{exep:Qfunc}
    \begin{enumerate}[fullwidth, label=\arabic*)]
        \item Let $X$ and $Y$ be non-empty sets and $f:X\to Y$ an injective function. 
            Let $\mathbf{X}$, $\mathbf{Y}$ and $\mathbf{G_f}$ be the conditional sets generated by $X,Y$ and the graph $G_f$ of $f$.\footnote{In the sense of Example \ref{ex:L0} 5).}
            Then the conditional function $\mathbf{f}:\mathbf{X}\to\mathbf{Y}$ is conditionally injective. 
        \item Let $\prod \mathbf{X}_i$ be the conditional product of a family of conditional sets.  
            The $j$-th projection $pr_j:\prod X_i\to X_j$ is a stable function, and $\mathbf{pr}_j:\prod \mathbf{X}_i\to \mathbf{X}_j$ is called the \emph{$j$-th conditional projection}. 
        \item Let $\mathbf{X}$ be a conditional set and $\mathbf{Y}$ a conditional subset of $\mathbf{X}$ on $1$. The embedding $Y\hookrightarrow X$ is stable, and the conditional function $\mathbf{Y}\hookrightarrow \mathbf{X}$ is called a \emph{conditional embedding}. 
        \item We call the conditional function $\abs{\cdot}:\mathbf{Q}\to \mathbf{Q}_+=\{\mathbf{q}\colon \mathbf{0}\leqslant \mathbf{q}\}$ generated by $q=\sum q_i|a_i\mapsto \sum \abs{q_i}|a_i$ on $Q$, the \emph{conditional absolute value}.
    \end{enumerate}
\end{examples}
The assertions in the following proposition follow from Theorem \ref{thm:powerset}. 
\begin{proposition}\label{prop:condfunc}
    Let $\mathbf{X}$ and $\mathbf{Y}$ be two conditional sets and $\mathbf{f}:\mathbf{X}\to \mathbf{Y}$ a conditional function. 
    For a non-empty family $(\mathbf{U}_i)$ of conditional subsets of $\mathbf{X}$, a non-empty family $(\mathbf{V}_j)$ of conditional subsets of $\mathbf{Y}$, conditional subsets $\mathbf{Z}_1,\mathbf{Z}_2$ of $\mathbf{X}$ with $\mathbf{Z}_1\sqsubseteq \mathbf{Z}_2$ and conditional subsets $\mathbf{W}_1$, $\mathbf{W}_2$ of $\mathbf{Y}$ with $\mathbf{W}_1\sqsubseteq \mathbf{W}_2$, and $\mathbf{Z}\sqsubseteq \mathbf{X}$ and $\mathbf{W}\sqsubseteq \mathbf{Y}$, it holds 
    \begin{align}
        \mathbf{f}(\mathbf{Z}_1)&\sqsubseteq \mathbf{f}(\mathbf{Z}_2),&\mathbf{f}^{-1}(\mathbf{W}_1)&\sqsubseteq \mathbf{f}^{-1}(\mathbf{W}_2),\label{f:021}\\
        \mathbf{f}(\sqcup \mathbf{U}_i)&=\sqcup \mathbf{f}(\mathbf{U}_i),&\mathbf{f}^{-1}(\sqcup \mathbf{V}_i)&=\sqcup \mathbf{f}^{-1}(\mathbf{V}_i),\label{f:031}\\
        \mathbf{f}(\sqcap \mathbf{U}_i)&\sqsubseteq \sqcap \mathbf{f}(\mathbf{U}_i),&\mathbf{f}^{-1}(\sqcap \mathbf{V}_i) &= \sqcap \mathbf{f}^{-1}(\mathbf{V}_i),\label{f:041}\\
        \mathbf{f}(\mathbf{Z})^\sqsubset \sqcap \mathbf{f}(\mathbf{X}) &\sqsubseteq \mathbf{f}(\mathbf{Z}^{\sqsubset}),&\mathbf{f}^{-1}(\mathbf{W}^\sqsubset)&=\mathbf{f}^{-1}(\mathbf{W})^{\sqsubset},\label{f:051}\\
        \mathbf{Z}&\sqsubseteq \mathbf{f}^{-1}(\mathbf{f}(\mathbf{Z})),&\mathbf{f}(\mathbf{f}^{-1}(\mathbf{W}))&\sqsubseteq \mathbf{W}.\label{f:061}
    \end{align}
    It holds equality on the left-hand side of \eqref{f:061} if $\mathbf{f}$ is conditionally injective, and on its right-hand side if $\mathbf{W}\sqsubseteq \mathbf{f}(\mathbf{X})$.  
\end{proposition}
\subsection{Conditional family and axiom of choice}
\begin{definition}
    Let $\mathbf{X}$ and $\mathbf{I}$ be conditional sets.
    A \emph{stable family} $(x_i)$ of elements in $X$ is the graph $G_f=\{(f(i),i)\colon i\in I\}$ where $\mathbf{f}:\mathbf{I}\to \mathbf{X}$ is a conditional function.
    In particular, $\sum x_{i_j}|a_j=x_{\sum i_j|a_j}$ for every $(a_j)\in p(1)$ and every family $(i_j)$ of elements in $I$. 
    A \emph{conditional family} $(\mathbf{x}_{\mathbf{i}})$ of conditional elements of $\mathbf{X}$ is the conditional graph $\mathbf{G}_\mathbf{f}$.
\end{definition} 
A conditional family $(\mathbf{x_i})$ is a conditional element of $\prod_{i\in I}\mathbf{X}_i$ where $\mathbf{X}_i=\mathbf{X}$ for each $i\in I$. 
By Proposition \ref{rule02}, the collection of all stable families $(x_i)$ is a stable subset of $\prod_{i\in I} X_i$, and we denote by $\mathbf{X^I}$ the conditional set corresponding to it. 
For a conditional natural number $\mathbf{n}$, we write $\mathbf{X^n}$ for $\mathbf{X^{\{1\leqslant l\leqslant n\}}}$. 
Note that $$\Set{\mathbf{1}\leqslant \mathbf{l} \leqslant \mathbf{n}} = \cond\left(\Set{\sum l_i|a_i \colon 1\leq l_i\leq n_i, \text{ for each }i}\right)$$ where $n=\sum n_i|a_i\in N$.
In particular, stability implies $\mathbf{X^n}=\sum \mathbf{X}^{n_i}|a_i$. 
\begin{example}
    Let $\mathbf{X}$ be a conditional set and $(\mathbf{I},\leqslant)$ be a conditional direction.
    A conditional family $(\mathbf{x}_{\mathbf{i}})$ of conditional elements of $\mathbf{X}$ is called a \emph{conditional net}.  
    In case that $(\mathbf{I},\leqslant)$ equals to $(\mathbf{N},\leqslant)$, a conditional family $(\mathbf{x}_{\mathbf{n}})$ is called a \emph{conditional sequence} of conditional elements of $\mathbf{X}$.
\end{example}
\begin{lemma}\label{lem:condfinite}
    Let $\mathbf{X}$ be a conditional subset of $\mathbf{N}$ on $1$.
    Suppose that $\mathbf{x}\leqslant \mathbf{k}$ for every conditional element $\mathbf{x}$ of $\mathbf{X}$ and for some conditional element $\mathbf{k}$ of $\mathbf{N}$.
    Then there exists a conditional bijection $\mathbf{f}:\mathbf{X}\to \set{\mathbf{1}\leqslant \mathbf{l} \leqslant \mathbf{n}}$ for a unique conditional element $\mathbf{n}$ of $\mathbf{N}$.   
\end{lemma}
\begin{proof}
    Suppose that $k=\sum k_i|b_i$.
    For each $i$, let $J_i$ be the set of non-empty subsets of $\set{1,\ldots,k_i}$. 
    For each $M_i\in J_i$, define $a_{M_i}=\vee\{a\colon a\leq b_i\text{ and }\{n \in N\colon x|a=n|a\text{ for some }x \in X\}=M_i\}$.
    Then one has $(a_{M_i})\in p(b_i)$ for each $i$.
    Define $n=\sum_{i, M_i\in J_i} \text{card}(M_i)| a_{M_i}$ where $\text{card}(M_i)$ denotes the cardinality of $M_i$. 
    Choose $x=\sum m_j|d_j\in X$. 
    On every $a_{M_i}\wedge d_j>0$ it holds $m_j\in M_i$, the position\footnote{Each $M_i$ is an ordered set of the form $\{n_1^{M_i},\ldots, n^{M_i}_{k_{M_i}}\}$ and $m_{j,M_i}$ is the index such that $m_j=n^{M_i}_{m_{j,M_i}}$.} of which in the ordered set $M_i$ is denoted by $m_{j,M_i}$.
    Define $f(x)=\sum m_{j,M_i}|a_{i,M_i}\wedge d_j$.  
    Then $f:X\to \set{1\leqslant l\leqslant n}$ is stable by Proposition \ref{rule02}.  
    By construction, $n\in N$ is unique and $\mathbf{f}:\mathbf{X}\to \set{\mathbf{1}\leqslant \mathbf{l} \leqslant \mathbf{n}}$ a conditional bijection.
\end{proof}
\begin{definition}
    A conditional set $\mathbf{X}$ is \emph{conditionally countable} if there exists a conditional injection $\mathbf{f}:\mathbf{X}\to\mathbf{N}$.
    It is \emph{conditionally finite} if there exists a conditional bijection $\mathbf{f}:\mathbf{X}\to\set{\mathbf{1}\leqslant \mathbf{l}\leqslant \mathbf{n}}$ for some conditional element $\mathbf{n}$ of $\mathbf{N}$. 
\end{definition}
\begin{example}
    The conditional rational numbers $\mathbf{Q}$ are conditionally countable since every injection $f:\mathbb{Q}\to\mathbb{N}$ generates a conditional injection $\mathbf{f}:\mathbf{Q}\to\mathbf{N}$ by Example \ref{exep:Qfunc} 1). 
\end{example}
\begin{proposition}\label{lem:repfin}
    Let $\mathbf{X}$ be a conditional set and $(\mathbf{Y}_{\mathbf{n}})$ be a conditional sequence of conditional subsets of $\mathbf{X}$. 
    Then it holds:\footnote{$\sqcup_{\mathbf{1}\leqslant\mathbf{l}\leqslant \mathbf{n} } $ and $\sqcap_{\mathbf{1}\leqslant\mathbf{l}\leqslant \mathbf{n}}$ are understood as the conditional union and intersection over all conditional elements $\mathbf{l}$ such that $\mathbf{1}\leqslant \mathbf{l}\leqslant \mathbf{n}$.}
    \begin{enumerate}[fullwidth,label=(\roman*)]
        \item
            \begin{equation*} 
                \sqcup_{\mathbf{1}\leqslant \mathbf{l}\leqslant \mathbf{n}} \mathbf{Y}_{\mathbf{l}} =\sum \left(\sqcup_{l_i=1}^{n_i} \mathbf{Y}_{\mathbf{l}_{i}}\right)|a_i \quad \text{and} \quad 
                \sqcap_{\mathbf{1}\leqslant \mathbf{l}\leqslant \mathbf{n}} \mathbf{Y}_{\mathbf{l}} =\sum \left(\sqcap_{l_i=1}^{n_i} \mathbf{Y}_{\mathbf{l}_m}\right)|a_i\wedge b_i
            \end{equation*}
            where $n=\sum n_i|a_i$ and $\sqcap_{l_i=1}^{n_i} \mathbf{Y}_{\mathbf{l}_i}$ is a conditional subset on $b_i$ for each $i$. 
        \item\label{point3} If $\mathbf{Y}_{\mathbf{l}}$ is conditionally finite for each $\mathbf{1}\leqslant \mathbf{l}\leqslant \mathbf{n}$, then $\sqcup_{\mathbf{1}\leqslant \mathbf{l}\leqslant \mathbf{n}} \mathbf{Y}_{\mathbf{l}}$ is conditionally finite.
        \item \label{point4}
            If $\mathbf{Y}_{\mathbf{n}}$ is conditionally countable for each $\mathbf{n}$, then $\sqcup \mathbf{Y}_{\mathbf{n}}$ is conditionally countable. 
    \end{enumerate} 
\end{proposition}
\begin{proof}
    The statements are implied by Proposition \ref{rule02}.
\end{proof}
We close this section by a conditional axiom of choice. 
\begin{theorem}\label{thm:choice}
    Let $\mathbf{X}$ be a conditional set and $(\mathbf{Y}_{\mathbf{i}})$ be a conditional family of conditional subsets of $\mathbf{X}$.  
    Then there exists a conditional family $(\mathbf{y}_\mathbf{i})$ of conditional elements of $\mathbf{X}$ such that $\mathbf{y}_{\mathbf{i}}$ is a conditional element of $\mathbf{Y}_{\mathbf{i}}$  for each $\mathbf{i}$.  
\end{theorem}
\begin{proof}
    Let 
    \begin{equation*}
        \mathscr{H}:=\Set{(y_j)_{j\in J}: y_j\in Y_j \text{ for each } j\in J, J\in S(\mathbf{I})}. 
    \end{equation*}
    The set $\mathscr{H}$ is non-empty since it includes every one-element family.  
    Define an ordering on $\mathscr{H}$ by $$(y_j)_{j\in J} \leq (\bar{y}_j)_{j\in \bar{J}}\text{ whenever }J\subseteq \bar{J}\text{ and }y_j=\bar{y}_j \text{ for all }j\in J.$$  
    Let $(y_j)_{j\in J_\alpha}$ be a chain in $\mathscr{H}$, and put $$J=\Set{\sum j_\beta|a_\beta: (a_\beta)\in p(1), j_\beta\in J_{\alpha_\beta} \text{ for each }\beta}.$$ 
    By Proposition \ref{rule02}, one has $J\in S(\mathbf{I})$.  
    For $j=\sum j_\beta|a_\beta\in J$, define $y_j=\sum y_{j_\beta}|a_\beta$.  
    Since $(\mathbf{Y}_{\mathbf{i}})$ is a conditional family, it holds $(y_j)_{j\in J}\in \mathscr{H}$. 
    Inspection shows that $(y_j)_{j\in J_\alpha}\leq (y_j)_{j\in J}$ for each $\alpha$.   
    By Zorn's lemma, there exists a maximal element $(y_j)_{j\in J^\ast}$ in $\mathscr{H}$. 
    By way of contradiction, suppose there exists $i_0\in I$ such that $\mathbf{y}_{\mathbf{j}}\sqcap \mathbf{Y}_{\mathbf{i}_0}$ on some $b_j<1$ for all $j\in J^\ast$.   
    Let $\hat{J}=\set{j|a + i_0|a^c: a\in\mathcal{A},j\in J^\ast}$, pick some $y_{i_0}\in Y_{i_0}$ and define $y_j=y_j|a + y_{i_0}|a^c$ for each $j\in \hat{J}$. 
    Then $(y_j)_{j\in \hat{J}}$ is an element in $\mathscr{H}$. 
    However, one has $(y_j)_{j\in J^\ast}< (y_j)_{j\in \hat{J}}$ which is the desired contradiction. 
\end{proof}
\section{Conditional topology}\label{ch:topology}
Let $\mathbf{X}$ be a conditional set.
From the construction of the conditional power set it follows that 
\begin{itemize}
    \item $\mathbf{P}(\mathbf{P}(\mathbf{X}))$ is a conditional set of $P(\mathbf{P}(\mathbf{X}))$;
    \item $P(\mathbf{P}(\mathbf{X}))$ are the conditional subsets of $\mathbf{P}(\mathbf{X})$ on $1$;
    \item elements in $P(\mathbf{P}(\mathbf{X}))$ are generated by $S(\mathbf{P}(\mathbf{X}))$;
    \item $S(\mathbf{P}(\mathbf{X}))$ are stable subsets of $P(\mathbf{X})$.
\end{itemize}
Hence, an element in $\mathbf{P}(\mathbf{P}(\mathbf{X}))$ is a conditional collection of conditional subsets of $\mathbf{X}$ and an element in $S(\mathbf{P}(\mathbf{X}))$ is a stable collection of conditional subsets of $\mathbf{X}$ on $1$.
A stable collection $\mathcal{B}$ of conditional subsets of $\mathbf{X}$ on $1$ is in one-to-one relation to a stable collection $\mathscr{B}$ of stable subsets of $X$, the relation being given by $\mathcal{B}=\{ \mathbf{Y}\colon Y \in \mathscr{B} \}$ and $\mathscr{B}=\{ Y\colon \mathbf{Y}\in \mathcal{B} \}$.
\begin{definition}
    Let $\mathbf{X}$ be a conditional set and $\mathcal{T}$ a conditional collection of conditional subsets of $\mathbf{X}$.
    $\mathcal{T}$ is called a \emph{conditional topology} on $\mathbf{X}$ whenever
    \begin{enumerate}[label=(\roman*)]
        \item $\mathbf{X}\in \mathcal{T}$,
        \item if $\mathbf{O}_1,\mathbf{O}_2\in \mathcal{T}$, then $\mathbf{O}_1\sqcap \mathbf{O}_2\in\mathcal{T}$,
        \item if $(\mathbf{O}_i)$ is a non-empty collection in $\mathcal{T}$, then $\sqcup \mathbf{O}_i\in \mathcal{T}$.  
    \end{enumerate} 
    The pair $(\mathbf{X},\mathcal{T})$ is called a \emph{conditional topological space}. 
    A conditional set $\mathbf{O}\in \mathcal{T}$ is called \emph{conditionally open}. 
    A conditional subset $\mathbf{F}$ of $\mathbf{X}$ is called \emph{conditionally closed} whenever $\mathbf{F}^\sqsubset\in \mathcal{T}$.\footnote{
    Due to the duality principle in Boolean algebras, see \cite[p.~13]{monk1989handbook}, the conditional collection of all conditionally closed sets satisfies the dual properties of the conditional collection of all conditionally open sets.
    }
    Given two conditional topologies $\mathcal{T}_1$ and $\mathcal{T}_2$, $\mathcal{T}_1$ is said to be \emph{conditionally weaker} than $\mathcal{T}_2$ whenever $\mathcal{T}_1 \sqsubseteq \mathcal{T}_2$. 
    A conditional collection $\mathcal{B}$ of conditional subsets of $\mathbf{X}$ is a \emph{conditional topological base} whenever 
    \begin{enumerate}[label=\textit{(\roman*)}]
        \item\label{base:01} $\sqcup \mathcal{B}=\mathbf{X}$, 
        \item\label{base:02} if $\mathbf{O}_1,\mathbf{O}_2 \in \mathcal{B}$ and $\mathbf{x}$ is a conditional element of $\mathbf{O}_1\sqcap \mathbf{O}_2$, then there exists $\mathbf{O}_3\in \mathcal{B}$ such that $\mathbf{x}$ is a conditional element of $\mathbf{O}_3$ and $\mathbf{O}_3\sqsubseteq \mathbf{O}_1\sqcap \mathbf{O}_2$.  
    \end{enumerate} 
\end{definition}
The conditional topology \emph{conditionally generated} by a conditional collection $\mathcal{G}$ of conditional subsets of $\mathbf{X}$ is 
\begin{equation*}
    \mathcal{T}^{\mathcal{G}}:=\sqcap\Set{\mathcal{T}\colon \mathcal{T}\text{ conditional topology, }\mathcal{G}\sqsubseteq \mathcal{T}}. 
\end{equation*}
For a conditional topological base $\mathcal{B}$, inspection shows 
\begin{equation*}\label{eq:base}
\mathcal{T}^{\mathcal{B}}=\{\sqcup \mathbf{O_i}: (\mathbf{O_i}) \text{ non-empty collection in } \mathcal{B}\}. 
\end{equation*}
\begin{example}\label{exep:Qtop}
    For conditional elements $\mathbf{q},\mathbf{r}$ of $\mathbf{Q}$ such that $\mathbf{r>0}$, define the conditional set\footnote{See Example \ref{exep:Qfunc} 4) for the definition of the conditional absolute value.}
    \begin{equation*}
        \mathbf{B}_{\mathbf{r}}(\mathbf{q}):=\Set{\mathbf{p}\colon\abs{\mathbf{q}-\mathbf{p}}<\mathbf{r}}.
    \end{equation*}
    The conditional collection $\mathcal{B}$ of conditional sets generated by the stable collection
    \begin{equation*}
        \Set{\mathbf{B}_{\mathbf{r}}(\mathbf{q})\colon \mathbf{q},\mathbf{r}\text{ of } \mathbf{Q} \text{ with } \mathbf{r>0}}  
    \end{equation*}
    is a conditional topological base of $\mathcal{T}^\mathcal{B}$ called the \emph{conditional Euclidean topology} on $\mathbf{Q}$.
\end{example}
\begin{definition}
    Given a conditional topological space $(\mathbf{X},\mathcal{T})$ and a conditional subset $\mathbf{Y}$ of $\mathbf{X}$, the \emph{conditional interior} of $\mathbf{Y}$ is defined by 
    \begin{equation*}
        \Int(\mathbf{Y}):=\sqcup\Set{ \mathbf{O}\colon \mathbf{O}\text{ conditionally open, } \mathbf{O}\sqsubseteq \mathbf{Y}}, 
    \end{equation*}
    and its \emph{conditional closure} by
    \begin{equation*}
        \Cl(\mathbf{Y}):=\sqcap \set{\mathbf{F}\colon \mathbf{F} \text{ conditionally closed, }\mathbf{Y}\sqsubseteq \mathbf{F}}. 
    \end{equation*}
\end{definition} 
By the duality principle, one has
\begin{equation*}
    \Cl(\mathbf{Y})^\sqsubset=\Int\left(\mathbf{Y}^\sqsubset\right) \quad \text{and}\quad \Int(\mathbf{Y})^\sqsubset=\Cl\left(\mathbf{Y}^\sqsubset\right). 
\end{equation*} 
\begin{definition}
    Let $(\mathbf{X},\mathcal{T})$ be a conditional topological space and $\mathbf{x}$ a conditional element of $\mathbf{X}$. 
    A conditional subset $\mathbf{U}$ of $\mathbf{X}$ is a \emph{conditional neighborhood} of $\mathbf{x}$, if there exists a conditionally open set $\mathbf{O}$ such that $\mathbf{x}$ is a conditional element of $\mathbf{O}$ and $\mathbf{O} \sqsubseteq \mathbf{U}$.\footnote{In particular, $\mathbf{U}$ is on $1$.}   
    Let
    \begin{equation*}
        \mathcal{U}(\mathbf{x})=\set{\mathbf{U}\colon \mathbf{U} \text{ conditional neighborhood of } \mathbf{x}}
    \end{equation*}
    denote the stable collection of all conditional neighborhoods of $\mathbf{x}$.
    A \emph{conditional neighborhood base} of $\mathbf{x}$ is a stable collection $\mathcal{V}$ of conditional subsets of $\mathbf{X}$ on $1$ such that for every conditional neighborhood $\mathbf{U}$ of $\mathbf{x}$ there exists $\mathbf{V}\in \mathcal{V}$ such that $\mathbf{x}$ is a conditional element of $\mathbf{V}$ and $\mathbf{V} \sqsubseteq \mathbf{U}$.     
\end{definition}
A conditional topological space $(\mathbf{X},\mathcal{T})$ is \emph{conditionally first countable} if every conditional element $\mathbf{x}$ of $\mathbf{X}$ has a conditionally countable neighborhood base. 
It is \emph{conditionally second countable} if $\mathcal{T}$ is conditionally generated by a conditionally countable topological base. 
It is \emph{conditionally Hausdorff} if for every pair $\mathbf{x},\mathbf{y}$ of conditional elements of $\mathbf{X}$ with $\mathbf{x}\sqcap \mathbf{y}=\mathbf{X}|0$ there exists a conditional neighborhood $\mathbf{U}$ of $\mathbf{x}$ and a conditional neighborhood $\mathbf{V}$ of $\mathbf{y}$ such that $\mathbf{U}\sqcap \mathbf{V}=\mathbf{X}|0$.  
A conditional subset $\mathbf{Y}$ of $\mathbf{X}$ is \emph{conditionally dense} if $\Cl(\mathbf{Y})=\mathbf{X}$, and $(\mathbf{X},\mathcal{T})$ is \emph{conditionally separable} if $\mathbf{X}$ has a conditionally countable dense subset. 

Let $\mathscr{B}$ be a classical topological base on $X$. We denote by $\mathscr{T}^{\mathscr{B}}$ the classical topology generated by $\mathscr{B}$. Furthermore, denote by $\text{cl}(Y)$ the closure and by $\text{int}(Y)$ the interior of some $Y\subseteq X$ with respect to $\mathscr{T}^{\mathscr{B}}$. 
\begin{proposition}\label{prop:inducedtopology}
    Let $\mathbf{X}$ be a conditional set, $\mathscr{B}$ a stable collection of stable subsets of $X$ and $\mathcal{B}$ the corresponding conditional collection of conditional subsets of $\mathbf{X}$.
    Then $\mathcal{B}$ is a conditional topological base on $\mathbf{X}$ if, and only if, $\mathscr{B}$ is a classical topological base on $X$. 
    Moreover, it holds $$\set{O\in \mathscr{T}^{\mathscr{B}}\colon O\in S(\mathbf{X})}=\set{O\in S(\mathbf{X})\colon \mathbf{O}\in \mathcal{T}^{\mathcal{B}}}.$$   
\end{proposition} 
\begin{proof} 
    First assume that $\mathcal{B}$ is a conditional topological base.
    By Lemma \ref{lem:1fcunion}, one has $\cup \mathscr{B}=X$. 
    Let $O_1,O_2\in\mathscr{B}$ and $x\in O_1\cap O_2$. 
    By the definition of conditional intersection, $\mathbf{x}$ is a conditional element of $\mathbf{O}_1\sqcap \mathbf{O}_2$. 
    Hence there exists $\mathbf{O}_3\in\mathcal{B}$ such that $\mathbf{x}$ is a conditional element of $\mathbf{O}_3$ and $\mathbf{O}_3\sqsubseteq \mathbf{O}_1\sqcap \mathbf{O}_2$. 
    By Lemma \ref{lem:1fcunion}, one concludes $x\in O_3$ and $O_3 \subseteq O_1\cap O_2$.  
    Second assume that $\mathscr{B}$ is a classical base. 
    By Lemma \ref{lem:1fcunion}, it holds $\sqcup \mathcal{B}=\mathbf{X}$. 
    Let $\mathbf{O}_1,\mathbf{O}_2\in \mathcal{B}$, suppose $\mathbf{O}_1\sqcap \mathbf{O}_2$ is a conditional subset of $\mathbf{X}$ on $b$, and let $\mathbf{x}|b$ be a conditional element of $\mathbf{O}_1\sqcap \mathbf{O}_2$. 
    Since $\cup \mathscr{B}=X$, there exists $O\in \mathscr{B}$ such that $x\in O$. 
    Since $\mathscr{B}$ is stable, it holds $O_1|b + O|b^c, O_2|b + O|b^c\in \mathscr{B}$. 
    Moreover, $x\in (O_1|b + O|b^c)\cap (O_2|b + O|b^c)$.  
    Hence there exists $O_3\in\mathscr{B}$ such that $x\in O_3$ and $O_3\subseteq (O_1|b + O|b^c)\cap (O_2|b + O|b^c)$. 
    By \ref{cond2} and Lemma \ref{lem:1fcunion}, $\mathbf{x}|b$ is a conditional element of $\mathbf{O}_3|b$ and $\mathbf{O}_3|b\sqsubseteq \mathbf{O}_1\sqcap \mathbf{O}_2$. 
\end{proof} 
\begin{example}\label{e:l0top}
    Let $L^0_{++}:=\set{x\in L^0: X>0}$.
    Then for every $x\in L^0$ and each $r\in L^0_{++}$, 
    \begin{equation*}
        B_{r}(x):=\Set{y\in L^0: |X-Y|< R}
    \end{equation*}
    is a stable subset of $L^0$.
    The stable collection
    \begin{equation*}
        \mathscr{B}:=\Set{B_{r}(x): x\in L^0, r\in L^0_{++}}
    \end{equation*}
    generates the $L^0$-topology introduced in \cite{kupper03}.
    According to Proposition \ref{prop:inducedtopology}, the corresponding conditional collection $\mathcal{B}$ of conditional subsets of $\mathbf{L}^0$ is a conditional topological base generating $\mathcal{T}^{\mathcal{B}}$ which is conditionally Hausdorff and separable, see \cite[Lemma 5.3.2]{jamneshan13}.
\end{example}
\begin{proposition}\label{prop:cloint}
    Let $(\mathbf{X},\mathcal{T})$ be a conditional topological space and $\mathbf{Y}$ a conditional subset of $\mathbf{X}$ on $1$.  
    Then it holds
    \begin{align*}
        \Int(\mathbf{Y}) & = \sqcup \left\{ \mathbf{x}|b\colon \mathbf{U}|b\sqsubseteq \mathbf{Y} \text{ for some conditional neighborhood } \mathbf{U}\text{ of }\mathbf{x}\right\},\\
        \Cl(\mathbf{Y}) &= \sqcup \left\{ \mathbf{x}\colon \mathbf{U}\sqcap \mathbf{Y}\text{ is on }1\text{ for every conditional neighborhood }\mathbf{U}\text{ of }\mathbf{x} \right\}
    \end{align*}
    where $b=\vee\{a\colon \mathbf{O}\sqsubseteq \mathbf{Y} \text{ for some conditional open set }\mathbf{O}\text{ on }a\}$.
\end{proposition}
\begin{proof}
    The first assertion is immediate from the definitions. 
    As for the second one, assume that $\mathbf{x}$ is a conditional element of $\mathbf{F}$ for all conditionally closed sets $\mathbf{F}$ with $\mathbf{Y}\sqsubseteq \mathbf{F}$. 
    Suppose, for the sake of contradiction, that there exists a conditional open neighborhood $\mathbf{O}$ of $\mathbf{x}$ such that $\mathbf{O}\sqcap \mathbf{Y}$ is on $a<1$. 
    Then $\mathbf{x}|a^c\sqsubseteq \mathbf{Y}^\sqsubset|a^c$. 
    Since $\mathbf{O}^\sqsubset|a^c$ is conditionally closed, $\mathbf{O}^\sqsubset|a^c+\mathbf{X}|a$ is conditionally closed with $\mathbf{Y}\sqsubseteq \mathbf{O}^\sqsubset|a^c +\mathbf{X}|a$, which is the desired contradiction.   
    Conversely, assume that $\mathbf{x}$ is such that $\mathbf{U}\sqcap \mathbf{Y}$ is on $1$ for every conditional neighborhood $\mathbf{U}$ of $\mathbf{x}$.  
    Suppose, for the sake of contradiction, that there exists a conditionally closed set $\mathbf{F}$ with $\mathbf{Y}\sqsubseteq \mathbf{F}$ such that $\mathbf{x}\sqcap \mathbf{F}=\mathbf{x}|c$ for some $c<1$.  
    Then $\mathbf{x}|c^c\sqsubseteq \mathbf{F}^\sqsubset|c^c$. 
    Since $\mathbf{F}^\sqsubset|c^c$ is a conditionally open neighborhood of $\mathbf{x}|c^c$, it follows that $\mathbf{F}^\sqsubset|c^c +\mathbf{X}|c$ is a conditionally open neighborhood of $\mathbf{x}$. 
    By assumption, $(\mathbf{F}^\sqsubset|c^c +\mathbf{X}|c)\sqcap \mathbf{Y}$ is on $1$.   
    However, this contradicts $\mathbf{Y}\sqsubseteq \mathbf{F}$. 
\end{proof}

\subsection{Conditional continuity}

\begin{definition} 
    Let $(\mathbf{X},\mathcal{T})$ and $(\mathbf{X}^\prime,\mathcal{T}^\prime)$ be conditional topological spaces.
    A conditional function $\mathbf{f}:\mathbf{X}\to\mathbf{X}^\prime$ is said to be \emph{conditionally continuous} at the conditional element $\mathbf{x}$ of $\mathbf{X}$ if $\mathbf{f}^{-1}(\mathbf{U})$ is a conditional neighborhood of $\mathbf{x}$ for all conditional neighborhoods $\mathbf{U}$ of $\mathbf{f}(\mathbf{x})$.
    If $\mathbf{f}$ is conditionally continuous at every conditional element $\mathbf{x}$ of $\mathbf{X}$, then $\mathbf{f}$ is said to be conditionally continuous.  
    Let $(\mathbf{X}_i,\mathcal{T}_i)_{i\in I}$ be a non-empty family of conditional topological spaces and $(\mathbf{f}_i)_{i\in I}$ be a family of conditional functions $\mathbf{f}_i:\mathbf{X}\to \mathbf{X}_i$. 
    The \emph{conditional initial topology} on $\mathbf{X}$ for the family $(\mathbf{f}_i)_{i\in I}$ is the conditional topology generated by $\cond(\mathcal{G})$ where $\mathcal{G}:=\set{\mathbf{f}_i^{-1}(\mathbf{O}_i):\mathbf{f}_i^{-1}(\mathbf{O}_i)\text{ on $1$}, \mathbf{O}_i \in \mathcal{T}_i, i\in I}$.\footnote{Note that if $\mathbf{f}_i^{-1}(\mathbf{O}_i)$ is on $a<1$ for some $\mathbf{O}_i\in\mathcal{T}_i$, then $\mathbf{f}_i^{-1}(\mathbf{O}_i|a+\mathbf{X}_i|a^c)$ is on $1$.}
\end{definition}
\begin{examples}\label{exp:inittop}
    \begin{enumerate}[label=\arabic*), fullwidth]
        \item Let $(\mathbf{X},\mathcal{T})$ be a conditional topological space, $\mathbf{Y}$ a conditional subset of $\mathbf{X}$ on $1$ and $\mathbf{f}:\mathbf{Y}\to \mathbf{X}$ a conditional embedding, see Example \ref{exep:Qfunc} 3). 
            The \emph{conditional relative topology} of $\mathcal{T}$ with respect to $\mathbf{Y}$ is the conditional initial topology for $\mathbf{f}$. 
        \item Let $(\mathbf{X}_i,\mathcal{T}_i)$ be a non-empty family of \emph{conditional topological spaces}. 
            The conditional product topology on $\prod \mathbf{X}_i$ is the conditional initial topology for the family of conditional projections $(\mathbf{pr}_i)$.
    \end{enumerate}
\end{examples}
\begin{proposition}\label{p:cts}
    Let $(\mathbf{X},\mathcal{T})$ and $(\mathbf{X}^\prime,\mathcal{T}^\prime)$ be conditional topological spaces and $\mathbf{f}:\mathbf{X}\to\mathbf{X}^\prime$ a conditional function. 
    The following are equivalent:
    \begin{enumerate}[label=\textit{(\roman*)}]
        \item\label{cond:cont01} The conditional function $\mathbf{f}$ is conditionally continuous. 
        \item $\mathbf{f}^{-1}(\mathbf{O}^\prime)$ is conditionally open for every conditionally open set $\mathbf{O}^\prime$.
        \item\label{cond:cont02} $\mathbf{f}^{-1}(\mathbf{F}^\prime)$ is conditionally closed for every conditionally closed set $\mathbf{F}^\prime$. 
    \end{enumerate}
\end{proposition}
\begin{proof}
    The assertions are immediate from the definitions.
\end{proof}
\begin{proposition}\label{p:obenuntenstetig}
    Let $\mathbf{X}$ and $\mathbf{X}^\prime$ be two conditional sets, $\mathscr{B}$ and $\mathscr{B}^\prime$  stable collections of stable sets which are a base of a topology on $X$ and $X^\prime$, respectively, and $\mathcal{B}$ and $\mathcal{B}^\prime$  the corresponding conditional topological bases on $\mathbf{X}$ and $\mathbf{X}^\prime$, respectively.
    A conditional function $\mathbf{f}:\mathbf{X}\to \mathbf{X}^\prime$ is conditionally continuous if, and only if, $f:X\to X^\prime$ is continuous with respect to the topologies $\mathscr{T}^{\mathscr{B}}$ and $\mathscr{T}^{\mathscr{B}^\prime}$. 
\end{proposition}
\begin{proof}
    Assume that $f:X\to X^\prime$ is continuous.
    Let $\mathbf{O}\in\mathcal{B}^\prime$, and suppose that $\mathbf{f}^{-1}(\mathbf{O})$ is a conditional subset of $\mathbf{X}$ on $a$. 
    By Proposition \ref{prop:inducedtopology}, one has
    \begin{equation*}
        f^{-1}(O)|a+ X|a^c=f^{-1}(O|a+X^\prime|a^c)\in \set{O\in\mathscr{T}^{\mathscr{B}}:O\in S(\mathbf{X})}=\set{O\in S(\mathbf{X}): \mathbf{O}\in \mathcal{T}^{\mathcal{B}}}.
    \end{equation*}
    Thus $\mathbf{f}^{-1}(\mathbf{O}) = \mathbf{f}^{-1}(\mathbf{O}|a+\mathbf{X}^\prime|{a^c})|a \in \mathcal{T}^{\mathcal{B}}$. 
    Conversely, assume that $\mathbf{f}:\mathbf{X}\to\mathbf{X}^\prime$ is conditionally continuous and let $O\in\mathscr{B}^\prime$.
    Without loss of generality, we may assume that $f^{-1}(O)\neq\emptyset$. By Proposition \ref{prop:inducedtopology}, one has $f^{-1}(O)\in \set{O\in S(\mathbf{X}): \mathbf{O}\in \mathcal{T}^{\mathcal{B}}}=\set{O\in\mathscr{T}^{\mathscr{B}}:O\in S(\mathbf{X})}$, and therefore $f^{-1}(O)\in \mathscr{T}^{\mathscr{B}}$.
\end{proof} 
\subsection{Conditional filters}
\begin{definition}
    Let $\mathbf{X}$ be a conditional set.  
    A \emph{conditional filter} $\mathcal{F}$ on $\mathbf{X}$ is a stable collection of conditional subsets of $\mathbf{X}$ on $1$ satisfying the following conditions: 
    \begin{enumerate}[label=(\roman*)]
        \item\label{cond:filter01} if $\mathbf{Y}\in \mathcal{F}$ and $\mathbf{Y}\sqsubseteq \mathbf{Z}\sqsubseteq \mathbf{X}$, then $\mathbf{Z}\in \mathcal{F}$;
        \item\label{cond:filter02} if $\mathbf{Y}, \mathbf{Z}\in \mathcal{F}$, then $\mathbf{Y} \sqcap \mathbf{Z}\in \mathcal{F}$.  
    \end{enumerate}
    A conditional filter $\mathcal{F}$ is \emph{conditionally finer} than a conditional filter $\mathcal{F}^\prime$ if $\mathcal{F}^\prime\sqsubseteq \mathcal{F}$.\footnote{The conditional inclusion between the two stable collections of conditional subsets of $\mathbf{X}$ is understood in the sense of the conditional inclusion of the generated conditional sets.}
    A \emph{conditional ultrafilter} is a maximal element in the set of all conditional filters on $\mathbf{X}$. 
    A stable collection $\mathcal{B}$ of conditional subsets of $\mathbf{X}$ on $1$ is a \emph{conditional filter base} if for every $\mathbf{Y}_1,\mathbf{Y}_2\in \mathcal{B}$ there exists $\mathbf{Y}_3\in \mathcal{B}$ with $\mathbf{Y}_3  \sqsubseteq \mathbf{Y}_1 \sqcap \mathbf{Y}_2$.  
\end{definition}
\begin{remark}\label{rem:minimalknoten}
    By consistency, if $\mathcal{F}$ is a conditional filter on $\mathbf{X}$, then $\mathcal{F}|a$ is a conditional filter on $\mathbf{X}|a$.
    Moreover, let $\mathcal{F}:=\{\mathbf{Y}_i|a_i\colon i\}$ be a collection of conditional subsets of $\mathbf{X}$ not necessarily on $1$ such that $\sum \left(\mathbf{Y}_{i_j}|a_{i_j}\right)|b_j=\sum \mathbf{Y}_{i_j}|a_{i_j}\wedge b_j \in \mathcal{F}$ for every $(b_j)\in p(1)$ and each family $(\mathbf{Y}_{i_j}|a_{i_j})$ of elements in $\mathcal{F}$.
    Suppose that $\mathcal{F}$ satisfies \ref{cond:filter01} and \ref{cond:filter02} in the definition of a filter and additionally $\mathbf{X}|0 \not \in \mathcal{F}$ as in classical filter's definition.
    Then it follows that there exists a minimal condition $a^{\mathcal{F}}>0$ such that $\mathcal{F}|a^\mathcal{F}$ is a conditional filter on $\mathbf{X}|a^\mathcal{F}$.\footnote{
Indeed, suppose, for the sake of contradiction, that $\wedge a_i = 0$.
By de Morgan's law, it holds $\vee a_i^c=1$.
By the well-ordering theorem, choose $(b_i)\in p(1)$ such that $b_i\leq a_{i}^c$ for each $i$. Then $\sum \mathbf{Y}_{i}|b_i=\mathbf{X}|0$.
Since $\mathcal{F}$ is a stable collection of conditional sets, it follows that $\sum \mathbf{Y}_{i}|b_i \in \mathcal{F}$ which contradicts $\mathbf{X}|0\not\in\mathcal{F}$. 
    }
\end{remark}
For every conditional filter base $\mathcal{B}$, inspection shows that
\begin{equation*}
    \mathcal{F}^{\mathcal{B}}:=\Set{\mathbf{Z}\colon\mathbf{Y}\sqsubseteq \mathbf{Z}\sqsubseteq \mathbf{X} \text{ for some }\mathbf{Y}\in \mathcal{B}}   
\end{equation*}
is a conditional filter, called the conditional filter \emph{conditionally generated} by $\mathcal{B}$.  
\begin{examples}
    \begin{enumerate}[fullwidth,label=\arabic*)]
        \item The conditional trivial filter is $\set{\mathbf{X}}$. 
        \item The stable collection $\mathcal{U}(\mathbf{x})$ of all conditional neighborhoods of a conditional element $\mathbf{x}$ of a conditional topological space is a conditional filter. 
        \item For any conditional subset $\mathbf{Y}$ of $\mathbf{X}$ on $1$, the collection $\set{\mathbf{Z}\colon \mathbf{Y}\sqsubseteq \mathbf{Z}\sqsubseteq \mathbf{X}}$ is a conditional filter. 
    \end{enumerate}
\end{examples}
\begin{proposition}\label{prop:filtercondfilter}
    Let $\mathbf{X}$ be a conditional set and $\mathcal{B}$  a stable collection of conditional subsets of $\mathbf{X}$ on $1$.
    Then $\mathcal{B}$ is a conditional filter base if, and only if, the corresponding stable collection $\mathscr{B}$ of stable subsets of $X$ is a classical filter base on $X$. 
\end{proposition}
\begin{proof}
    By Lemma \ref{lem:1fcunion}, the equivalence is immediate from the definitions. 
\end{proof}
We next prove a conditional ultrafilter lemma. 
\begin{theorem}\label{bpit}
    For every conditional filter $\mathcal{F}$ there exists a conditional ultrafilter $\mathcal{U}$ such that $\mathcal{F}\sqsubseteq \mathcal{U}$.
\end{theorem}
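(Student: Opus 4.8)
The plan is to run a Zorn's lemma argument in direct analogy with the classical Ultrafilter lemma, the only real work being to identify the correct notion of upper bound for a chain of conditional filters. I would consider the classical set $\Phi$ of all conditional filters $\mathcal{G}$ on $X$ with $\mathcal{F}\sqsubseteq\mathcal{G}$, partially ordered by conditional inclusion $\sqsubseteq$. This is a genuine set, since its members are elements of the primal set of $S(S(X))$, and it is non-empty as $\mathcal{F}\in\Phi$.

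The key step is to show that every chain $(\mathcal{G}^t)_{t\in T}$ in $\Phi$ admits an upper bound in $\Phi$. I would take this upper bound to be the conditional union $\mathcal{G}:=\sqcup_t\mathcal{G}^t$, which by Corollary \ref{rem:interS} equals $\cond(\cup_t(\mathcal{G}^t)_1)$; its members are thus the amalgamations $\sum_i a_i Y^i$ with $(a_i)\in p(1)$ and each $Y^i\in\mathcal{G}^{t_i}$ for some $t_i\in T$. Being a stable hull of sets living on $1$, $\mathcal{G}$ lies in $S(S(X))$, and it clearly contains each $\mathcal{G}^t$ and hence $\mathcal{F}$. It then remains to verify the two filter axioms. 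For upward closure, given $W=\sum_i a_i Y^i\in\mathcal{G}$ and $W\sqsubseteq Z$ (so $Z$ lives on $1$), restriction yields $a_i Y^i=a_i W\sqsubseteq a_i Z$ for each $i$; since $a_i\mathcal{G}^{t_i}$ is a conditional filter on $a_i X$ containing $a_i Y^i$, upward closure there provides $\tilde{Y}^i\in\mathcal{G}^{t_i}$ with $a_i\tilde{Y}^i=a_i Z$, whence $Z=\sum_i a_i\tilde{Y}^i\in\mathcal{G}$ by stability. For closure under conditional intersection, given $W^1=\sum_i a_i Y^i$ and $W^2=\sum_j b_j Z^j$, the distributive rule \eqref{eq:operationstabil} gives $W^1\sqcap W^2=\sum_{i,j}(a_i\wedge b_j)(Y^i\sqcap Z^j)$ with $(a_i\wedge b_j)\in p(1)$; for each pair, the chain property renders $\mathcal{G}^{s_i}$ and $\mathcal{G}^{r_j}$ comparable, so both $Y^i$ and $Z^j$ belong to the larger of the two filters, and therefore $Y^i\sqcap Z^j$ belongs there as well and lives on $1$. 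Hence $W^1\sqcap W^2$ is again an amalgamation of members of $\cup_t(\mathcal{G}^t)_1$, i.e. $W^1\sqcap W^2\in\mathcal{G}$. This shows $\mathcal{G}\in\Phi$ is an upper bound.

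By Zorn's Lemma $\Phi$ then possesses a maximal element $\mathcal{U}$. Finally I would observe that $\mathcal{U}$ is in fact a conditional ultrafilter: any conditional filter $\mathcal{U}'$ with $\mathcal{U}\sqsubseteq\mathcal{U}'$ satisfies $\mathcal{F}\sqsubseteq\mathcal{U}'$, so $\mathcal{U}'\in\Phi$, and maximality forces $\mathcal{U}'=\mathcal{U}$. Thus $\mathcal{U}$ is conditionally maximal among \emph{all} conditional filters, with $\mathcal{F}\sqsubseteq\mathcal{U}$ by construction.

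I expect the main obstacle to be the upper-bound step, and within it the bookkeeping of amalgamated elements: unlike the classical case the union of a chain is not literally closed under the operations, so one must pass to the stable hull and repeatedly exploit both that the restrictions $a\mathcal{G}^t$ remain conditional filters and that the distributive laws \eqref{eq:operationstabil} reduce a conditional intersection of two amalgamations to its pairwise intersections, each of which is absorbed into a single filter of the chain.
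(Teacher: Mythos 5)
Your proposal is correct and follows essentially the same route as the paper: Zorn's lemma applied to the collection of conditional filters containing $\mathcal{F}$, with the conditional union $\sqcup_t \mathcal{G}^t$ (the stable hull of $\cup_t (\mathcal{G}^t)_1$) as the upper bound of a chain, and the two filter axioms verified via restriction to the pieces of a partition in $p(1)$ together with the distributive rule \eqref{eq:operationstabil}. The only differences are presentational: the paper tacitly works with a common decomposition in which both intersectands come from the same filter of the chain on each piece---exactly the common-refinement and comparability argument you spell out---and it leaves implicit your closing observation that maximality within this family already yields maximality among all conditional filters.
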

\begin{proof}
    Order $\mathscr{F}:=\Set{\mathcal{G} \colon \mathcal{G} \text{ is a conditional filter with } \mathcal{F}\sqsubseteq \mathcal{G}}$ by conditional inclusion. 
    Let $(\mathcal{G}_i)$ be a chain in $\mathscr{F}$ and set $\mathcal{W}:=\sqcup \mathcal{G}_i$.
    We show that $\mathcal{W}$ is a conditional filter with $\mathcal{F}\sqsubseteq \mathcal{W}$.
    Let $\mathbf{Z}\sqsubseteq \mathbf{X}$ be such that $\mathbf{Y}\sqsubseteq \mathbf{Z}$ for some $\mathbf{Y}\in \mathcal{W}$. 
    Then $\mathbf{Y}=\sum \mathbf{Y}_j|a_j$ for some $(a_j)\in p(1)$ and $\mathbf{Y}_j\in \mathcal{G}_{i_j}$ for each $j$. 
    For each $j$, it holds $\mathbf{Z}|a_j\in \mathcal{G}_{i_j}|a_j$ because $\mathbf{Y}_j|a_j\sqsubseteq \mathbf{Z}|a_j$ and $\mathcal{G}_{i_j}|a_j$ is a conditional filter on $\mathbf{X}|a_j$.
    By stability,  $\mathbf{Z}|a_j+\mathbf{X}|a_j^c\in \mathcal{W}$ for all $j$, and thus $\mathbf{Z}=\sum (\mathbf{Z}|a_j+\mathbf{X}|a_j^c)|a_j\in \mathcal{W}$. 
    Let $\mathbf{Y},\mathbf{Z}\in \mathcal{W}$ where $\mathbf{Y}=\sum \mathbf{Y}_j|a_j$ and $\mathbf{Z}=\sum \mathbf{Z}_l|b_l$ for some $(a_j),(b_l)\in p(1)$, and $\mathbf{Y}_j\in \mathcal{G}_{i_j}$ for each $j$ and $\mathbf{Z}_l\in \mathcal{G}_{i_l}$ for each $l$. 
    Since $(\mathcal{G}_i)$ is a chain, it holds $\mathbf{Y}_j\sqcap \mathbf{Z}_l$ is a conditional subset on $1$ in $\mathcal{G}_i$ for $i=j\vee l$ for every $j,l$. 
    By stability, it holds that $\mathbf{Y}\sqcap \mathbf{Z}=\sum (\mathbf{Y}_j\sqcap \mathbf{Z}_l)|a_j\wedge b_l$ is a conditional set on $1$ in $\mathcal{W}$.
    Hence $\mathcal{W}$ is a conditional filter with $\mathcal{F}\sqsubseteq \mathcal{W}$ and therefore an upper bound for $(\mathcal{G}_i)$.
    The existence of a conditional ultrafilter follows by Zorn's lemma.
\end{proof}
\begin{proposition}\label{lem:ufilterprop}
    Let $\mathcal{U}$ be a conditional filter. 
    Then the following are equivalent:
    \begin{enumerate}[label=(\textit{\roman*})]
        \item\label{lem:001} $\mathcal{U}$ is a conditional ultrafilter. 
        \item\label{lem:003}  If $\mathbf{Y}_1\sqcup \mathbf{Y}_2\in \mathcal{U}$ for some $\mathbf{Y}_1,\mathbf{Y}_2\sqsubseteq \mathbf{X}$, then $\mathbf{Y}_1|a+\mathbf{Y}_2|a^c \in \mathcal{U}$ where either $a=a_1$ or $a^c=a_2$, whereby $\mathbf{Y}_1$ is on $a_1$ and $\mathbf{Y}_2$ is on $a_2$. 
        \item\label{lem:002} For every $\mathbf{Y}\sqsubseteq \mathbf{X}$, it holds $\mathbf{Y}|a+\mathbf{Y}^{\sqsubset}|a^c\in \mathcal{U}$ where either $a=a_1$ or $a^c=a_2$, whereby $\mathbf{Y}$ is on $a_1$ and $\mathbf{Y}^\sqsubset$ is on $a_2$. 
        \item\label{lem:004} For every $\mathbf{Y}\sqsubseteq \mathbf{X}$ such that $\mathbf{Y}\sqcap \mathbf{U}$ is on $1$ for every $\mathbf{U}\in \mathcal{U}$, it holds $\mathbf{Y} \in \mathcal{U}$. 
    \end{enumerate}
\end{proposition}
\begin{proof}
    \begin{enumerate}[fullwidth]
        \item[] To show that \ref{lem:001} implies \ref{lem:003}, let $\mathbf{Y}_1,\mathbf{Y}_2\sqsubseteq \mathbf{X}$ be such that $\mathbf{Y}_1\sqcup \mathbf{Y}_2\in \mathcal{U}$. 
            Since
            \begin{equation*}\label{rep2union}
                \mathbf{Y}_1\sqcup \mathbf{Y}_2=\mathbf{Y}_1|b_1 + (\mathbf{Y}_1\sqcup \mathbf{Y}_2)|b_2+\mathbf{Y}_2|b_3
            \end{equation*}
            where $b_1=a_1\wedge a^c_2$, $b_2=a_1\wedge a_2$ and $b_3=a_2\wedge a^c_1$, it holds $\mathbf{Y}_1|b_1, \mathbf{Y}_1|b_2\sqcup \mathbf{Y}_2|b_2, \mathbf{Y}_2|b_3 \in \mathcal{U}$. 
            Now if $\mathbf{Y}_1|b_2 \in \mathcal{U}$, then $a=b_1\vee b_2=a_1$ yields the claim.  
            Otherwise $\mathcal{F}:=\set{\mathbf{Z}\sqsubseteq \mathbf{X} \colon \mathbf{Z}\sqcup \mathbf{Y}_1|b_2\in \mathcal{U}}$ is a conditional filter such that $\mathbf{Y}_2|b_2\in \mathcal{F}$.  
            Since $\mathcal{U}\sqsubseteq \mathcal{F}$ and $\mathcal{U}$ is a conditional ultrafilter, it holds  $\mathbf{Y}_2|b_2 \in\mathcal{U}$.  
            In that case $a^c=b_2\vee b_3=a_2$ yields the assertion. 
        \item[]  As for \ref{lem:003} implies \ref{lem:002}, set $\mathbf{Y}_1:=\mathbf{Y}$ and $\mathbf{Y}_2:=\mathbf{Y}^\sqsubset$. Then \ref{lem:002} follows since $\mathbf{Y}\sqcup \mathbf{Y}^\sqsubset =\mathbf{X} \in \mathcal{U}$.   
        \item[] To show that \ref{lem:002} implies \ref{lem:001}, let $\mathcal{V}$ be a conditional filter conditionally finer than $\mathcal{U}$ and $\mathbf{Y}\in\mathcal{V}$. 
            For the sake of contradiction, suppose $\mathbf{Y} \not\in \mathcal{U}$. 
            Then by assumption $\mathbf{Y}|a+\mathbf{Y}^{\sqsubset}|a^c\in \mathcal{U}$ where $\mathbf{Y}^{\sqsubset}$ is on $a^c>0$. 
            Hence $\mathbf{Y}^{\sqsubset}|a^c\in \mathcal{U}\sqsubseteq \mathcal{V}$.
            However, since $\mathbf{Y}|a^c +\mathbf{X}|a,\mathbf{Y}^\sqsubset|a^c+\mathbf{X}|a\in \mathcal{V}$, it holds $(\mathbf{Y}|a^c +\mathbf{X}|a)\sqcap(\mathbf{Y}^\sqsubset|a^c+\mathbf{X}|a)=\mathbf{X}|a$ which contradicts the second property of a conditional filter. 
            Thus $a^c=0$, showing that $\mathcal{V}= \mathcal{U}$. 
        \item[] Finally we show that \ref{lem:001} is equivalent to \ref{lem:004}.
            Assume \ref{lem:001} and let $\mathbf{Y}\sqsubseteq \mathbf{X}$ be such that $\mathbf{Y}\sqcap \mathbf{U}$ is on $1$ for every $\mathbf{U}\in\mathcal{U}$. 
            Inspection shows that $\mathcal{B}:=\set{\mathbf{Y}\sqcap \mathbf{U}:\mathbf{U}\in\mathcal{U}}$ is a conditional filter base with $\mathcal{U}\sqsubseteq\mathcal{F}^{\mathcal{B}}$.
            Hence $\mathcal{U}=\mathcal{F}^\mathcal{B}$, and thus $\mathbf{Y} \in \mathcal{U}$.
            Conversely, let $\mathcal{V}$ be a conditional ultrafilter of $\mathcal{U}$ and let $\mathbf{Y} \in\mathcal{V}$. 
            From $\mathcal{U}\sqsubseteq \mathcal{V}$ it follows that $\mathbf{Y}\sqcap \mathbf{U}$ is on $1$ for every $\mathbf{U} \in \mathcal{U}$. 
            By assumption, one has $\mathbf{Y}\in \mathcal{U}$, and therefore $\mathcal{V}= \mathcal{U}$. 
    \end{enumerate}
\end{proof}
\begin{proposition}\label{prop:ultrafilterfonct}
    Let $\mathbf{X}$ and $\mathbf{Y}$ be conditional sets, $\mathbf{f}:\mathbf{X}\to\mathbf{Y}$ a conditional function, $\mathcal{F}$  a conditional filter on $\mathbf{X}$ and $\mathcal{U}$  a conditional ultrafilter on $\mathbf{X}$.  
    Then $\mathbf{f}(\mathcal{F}):=\set{\mathbf{f}(\mathbf{U}):\mathbf{U}\in\mathcal{F}}$ is a conditional filter base on $\mathbf{Y}$ and $\mathbf{f}(\mathcal{U})$ a conditional ultrafilter base on $\mathbf{Y}$. 
\end{proposition}
\begin{proof}
    Since $\mathcal{F}$ is a stable collection of conditional subsets of $\mathbf{X}$, it follows from the stability of $f$ that $\mathbf{f}(\mathcal{F})$ is also a stable collection of conditional subsets of $\mathbf{X}$.  
    It is immediate from the definitions, \eqref{f:021} and \eqref{f:041} that $\mathbf{f}(\mathbf{F})$ is a conditional filter base. 
    Next suppose that $\mathcal{U}$ is a conditional ultrafilter and denote by $\mathcal{V}$ the conditional filter conditionally generated by $\mathbf{f}(\mathcal{U})$.
    Let $\mathbf{V}\sqsubseteq \mathbf{Y}$ be on $a_1$ and $\mathbf{V}^\sqsubset$ on $a_2$. 
    By \eqref{f:031} and \eqref{f:051},  
    \begin{equation*}
        \mathbf{f}^{-1}(\mathbf{V})\sqcup \mathbf{f}^{-1}(\mathbf{V})^\sqsubset=\mathbf{f}^{-1}(\mathbf{V}\sqcup \mathbf{V}^{\sqsubset})=\mathbf{f}^{-1}(\mathbf{Y})=\mathbf{X}\in \mathcal{U}.
    \end{equation*}
    Since $\mathcal{U}$ is a conditional ultrafilter, Proposition \ref{lem:ufilterprop} implies that $\mathbf{U}:=\mathbf{f}^{-1}(\mathbf{V})|b+\mathbf{f}^{-1}(\mathbf{V}^{\sqsubset})|b^c \in \mathcal{U}$ where either $b=b_1$ or $b^c=b_2$ whereby $\mathbf{f}^{-1}(\mathbf{V})$ is on $b_1\leq a_1$ and $\mathbf{f}^{-1}(\mathbf{V}^\sqsubset)$ is on $b_2\leq a_2$.   
    By \eqref{f:051} and \eqref{f:061}, it holds 
    \begin{equation*}
        \mathbf{f}(\mathbf{U})=\mathbf{f}\left(\mathbf{f}^{-1}(\mathbf{V}) \right)|b+\mathbf{f}\left( \mathbf{f}^{-1}(\mathbf{V}^\sqsubset) \right)|b^c\sqsubseteq \mathbf{V}|b+\mathbf{V}^{\sqsubset}|b^c. 
    \end{equation*}
    Since $\mathbf{f}(\mathbf{U})\in\mathbf{f}(\mathcal{U})$, one has $\mathbf{V}|b+\mathbf{V}^{\sqsubset}|b^c \in \mathcal{V}$. 
    Without loss of generality, assume that $b=b_1\leq a_1$. 
    By concatenating, we obtain $\mathbf{V}|a+\mathbf{V}^{\sqsubset}|a^c \in \mathcal{V}$ where $a=a_1$.  Proposition \ref{lem:ufilterprop} implies that $\mathcal{V}$ is a conditional ultrafilter.
\end{proof}
\subsection{Conditional convergence}
\begin{definition}
    Let $(\mathbf{X},\mathcal{T})$ be a conditional topological space, $\mathcal{F}$  a conditional  filter and $(\mathbf{x}_{\mathbf{i}})$  a conditional net of conditional elements of $\mathbf{X}$.
    A conditional element $\mathbf{x}$ of $\mathbf{X}$ is said to be a 
    \begin{enumerate}[label=(\textit{\roman*})]
        \item \emph{conditional limit point} of $\mathcal{F}$ if $\mathcal{U}(\mathbf{x})\sqsubseteq \mathcal{F}$;
        \item \emph{conditional cluster point} of $\mathcal{F}$ if $\mathbf{x}$ is a conditional element of $\Cl(\mathbf{Y})$ for every $\mathbf{Y}\in \mathcal{F}$ and denote $\Lim \mathcal{F}:=\sqcap \set{\Cl(\mathbf{Y})\colon\mathbf{Y} \in \mathcal{F}}$;
        \item \emph{conditional limit point} of $(\mathbf{x}_{\mathbf{i}})$ if for every conditional neighborhood $\mathbf{U}$ of $\mathbf{x}$ there exists $\mathbf{i}_0$ such that each $\mathbf{x}_{\mathbf{i}}$ is a conditional element of $\mathbf{U}$ for every $\mathbf{i}\geqslant \mathbf{i}_0$;
        \item \emph{conditional cluster point} of $(\mathbf{x}_{\mathbf{i}})$ if for every conditional neighborhood $\mathbf{U}$ of $\mathbf{x}$ and every $\mathbf{i}$ there exists $\mathbf{j}\geqslant \mathbf{i}$ such that $\mathbf{x}_{\mathbf{j}}$ is a conditional element of $\mathbf{U}$.
    \end{enumerate}
\end{definition}
We indicate by $\mathcal{F}\to \mathbf{x}$ that $\mathbf{x}$ is the conditional limit point of $\mathcal{F}$.
For a classical topology $\mathscr{T}$ and a classical filter $\mathscr{F}$, denote 
by $\mathscr{F}\xrightarrow[]{\mathscr{T}} x$ the convergence of $\mathscr{F}$ to $x$ and 
by $\Limcstd \mathscr{F}$ the set of all cluster points of $\mathscr{F}$ with respect to $\mathscr{T}$.
\begin{remark}
    As in the classical case, inspection shows that conditional filters and conditional nets are in one-to-one relation.
    The following Propositions \ref{clustultra}, \ref{p:konvergenzobenunten} and \ref{stetigmitnetz} are formulated in terms of conditional filters.
    Their respective analogues hold for conditional nets.
\end{remark}
\begin{proposition}\label{clustultra}
    Let $(\mathbf{X},\mathcal{T})$ be a conditional topological space and $\mathcal{F}$  a conditional filter. 
    Then the following are equivalent:
    \begin{enumerate}[label=(\roman*)]
        \item\label{cond:filterconv01} The conditional element $\mathbf{x}$ is a conditional element of  $\Lim \mathcal{F}$. 
        \item\label{cond:filterconv03} There exists a conditional filter $\mathcal{G}$ conditionally finer than $\mathcal{F}$ such that $\mathcal{G}\to \mathbf{x}$.
    \end{enumerate} 
\end{proposition}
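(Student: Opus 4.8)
The plan is to prove this by the conditional analogue of the classical fact that the cluster points of a filter are exactly the limits of its finer filters, using the characterization of the conditional closure from Proposition \ref{prop:cloint} as the bridge between the two notions. Recall that \ref{cond:filterconv01} means $x\in\Cl(Y)$ for every $Y\in\mathcal{F}$, and by Proposition \ref{prop:cloint} this is equivalent to $U\sqcap Y\in S(X)$ for every $U\in\mathcal{U}(x)$ and every $Y\in\mathcal{F}$, whereas $\mathcal{G}\to x$ means $\mathcal{U}(x)\sqsubseteq\mathcal{G}$. Since $\Lim\mathcal{F}$ lives on $a_\ast:=\vee\{a:\cap_{Y\in\mathcal{F}}\Cl(Y)_a\neq\emptyset\}$ and a conditional element of it lives on $a_\ast$, I would first reduce to the case $a_\ast=1$ by passing to the restrictions $a_\ast X$ and $a_\ast\mathcal{F}$, so that $x$ lives on $1$; the fact that $a_\ast\mathcal{F}$ is a conditional filter on $a_\ast X$ and that cluster points restrict accordingly is guaranteed by the remarks preceding the proposition.

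For \ref{cond:filterconv01} $\Rightarrow$ \ref{cond:filterconv03}, I would introduce the conditional system
\[
\mathcal{B}:=\Set{U\sqcap Y : U\in\mathcal{U}(x),\, Y\in\mathcal{F}}.
\]
By the closure characterization each member lives on $1$, i.e.\ lies in $S(X)$, and the stability of $\mathcal{B}$ (hence $\mathcal{B}\in S(S(X))$) follows from the stability of $\mathcal{U}(x)$ and $\mathcal{F}$ together with \eqref{eq:operationstabil}. To see that $\mathcal{B}$ is a conditional filter base, given $U^1\sqcap Y^1, U^2\sqcap Y^2\in\mathcal{B}$ I would set $U^3:=U^1\sqcap U^2\in\mathcal{U}(x)$ and $Y^3:=Y^1\sqcap Y^2\in\mathcal{F}$ (both conditional filters being closed under conditional intersection), so that $U^3\sqcap Y^3\in\mathcal{B}$ and $U^3\sqcap Y^3\sqsubseteq(U^1\sqcap Y^1)\sqcap(U^2\sqcap Y^2)$. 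Let $\mathcal{G}:=\mathcal{F}^{\mathcal{B}}$ be the conditional filter generated by $\mathcal{B}$. Taking $U=X$ shows $Y=X\sqcap Y\in\mathcal{B}$ for every $Y\in\mathcal{F}$, whence $\mathcal{F}\sqsubseteq\mathcal{G}$; taking $Y=X$ shows $U=U\sqcap X\in\mathcal{B}$ for every $U\in\mathcal{U}(x)$, whence $\mathcal{U}(x)\sqsubseteq\mathcal{G}$, that is $\mathcal{G}\to x$.

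For the converse \ref{cond:filterconv03} $\Rightarrow$ \ref{cond:filterconv01}, suppose $\mathcal{G}\sqsupseteq\mathcal{F}$ is a conditional filter with $\mathcal{G}\to x$, so that $\mathcal{U}(x)\sqsubseteq\mathcal{G}$. For arbitrary $Y\in\mathcal{F}$ and $U\in\mathcal{U}(x)$ both $Y$ and $U$ lie in $\mathcal{G}$, and since a conditional filter is closed under conditional intersection, $U\sqcap Y\in\mathcal{G}$; as $\mathbf{0}\notin\mathcal{G}$, this forces $U\sqcap Y\in S(X)$. By Proposition \ref{prop:cloint} we conclude $x\in\Cl(Y)$, and as $Y\in\mathcal{F}$ was arbitrary, $x\in\sqcap\{\Cl(Y):Y\in\mathcal{F}\}=\Lim\mathcal{F}$.

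I expect the main obstacle to be the bookkeeping of the condition on which $\Lim\mathcal{F}$ lives: verifying that every conditional intersection $U\sqcap Y$ in $\mathcal{B}$ genuinely lives on $1$ (rather than degenerating on a strictly smaller condition) is exactly what the characterization in Proposition \ref{prop:cloint} secures, and the initial reduction to $a_\ast=1$ must be carried out so that $\mathcal{U}(x)$, $\mathcal{F}$, and $\mathcal{G}$ become conditional filters on one and the same restricted conditional set.
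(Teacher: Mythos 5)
Your proof is correct and follows essentially the same route as the paper's: the forward direction constructs the conditional filter generated by the base $\set{U\sqcap Y: U\in\mathcal{U}(x),\, Y\in\mathcal{F}}$, whose members live on $1$ precisely by the closure characterization \eqref{eq:characcl02}, and the converse applies that same characterization to $U\sqcap Y\in\mathcal{G}$. The additional bookkeeping you include (the reduction to $a_\ast=1$ and the explicit verification that $\mathcal{B}$ is a conditional filter base with each member in $S(X)$) is harmless detail that the paper leaves implicit.
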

\begin{proof}
    To show that \ref{cond:filterconv01} implies \ref{cond:filterconv03}, let $\mathbf{x}$ be a limit point of $\mathcal{F}$.
    Then $\Set{\mathbf{V}\sqcap \mathbf{U}:\mathbf{V}\in \mathcal{U}(\mathbf{x}), \mathbf{U}\in\mathcal{F}}$ is a conditional filter base of a conditional filter $\mathcal{G}$ conditionally finer than $\mathcal{F}$ and for which holds $\mathcal{G}\to \mathbf{x}$. 
    To show that \ref{cond:filterconv03} implies \ref{cond:filterconv01}, let $\mathcal{G}$ be a conditional filter conditionally finer than $\mathcal{F}$ and $\mathcal{G}\to \mathbf{x}$.
    Then $\mathbf{V}\sqcap \mathbf{Y}$ is on $1$ for all $\mathbf{V}\in \mathcal{U}(\mathbf{x})$ and $\mathbf{Y}\in \mathcal{F}$ since $\mathbf{V},\mathbf{Y}\in \mathcal{G}$. 
    Proposition \ref{prop:cloint} implies that $\mathbf{x}$ is a conditional element of $\Cl(\mathbf{Y})$ for all $\mathbf{Y}\in \mathcal{F}$ which shows that $\mathbf{x}$ is a conditional limit point of $\mathcal{F}$.
\end{proof}
\begin{proposition}\label{p:konvergenzobenunten}
    Let $\mathbf{X}$ be a conditional set, $\mathcal{G}$  a conditional filter base, $\mathcal{B}$  a conditional topological base, and $\mathscr{G}$ and $\mathscr{B}$  the corresponding stable collections of stable subsets of $X$.  
    Then 
    \begin{align*}
    x \in \Limcstd\mathscr{G}    \quad &\text{ if, and only if, }\quad \mathbf{x}\text{ is a conditional element of }\Lim \mathcal{G}\\
       \mathscr{G}\xrightarrow[]{\mathscr{T}^{\mathscr{B}}}x \quad &\text{ if, and only if, }\quad\mathcal{G}\to \mathbf{x}.
    \end{align*}
\end{proposition}
\begin{proof}
    The assertions follow from the Propositions \ref{prop:inducedtopology}, \ref{prop:cloint} and \ref{prop:filtercondfilter}.
\end{proof}
\begin{proposition}\label{stetigmitnetz}
    Let $(\mathbf{X},\mathcal{T})$ and $(\mathbf{X}^\prime,\mathcal{T}^\prime)$ be conditional topological spaces and $\mathbf{f}:\mathbf{X}\to \mathbf{X}^\prime$  a conditional function.
    Then the following are equivalent: 
    \begin{enumerate}[label=\textit{(\roman*)}]
        \item\label{cont1} The conditional function $\mathbf{f}$ is conditionally continuous at $\mathbf{x}$. 
        \item For every conditional filter $\mathcal{F}\to \mathbf{x}$, it holds $\mathbf{f}( \mathcal{F})\to \mathbf{f}(\mathbf{x})$. 
    \end{enumerate}
\end{proposition}
\begin{proof}
    By Proposition \ref{p:obenuntenstetig}, conditional continuity is equivalent to continuity. 
    The claim follows from the respective classical result and Proposition \ref{p:konvergenzobenunten}.
\end{proof}
\subsection{Conditional compactness}
Let $(\mathbf{X},\mathcal{T})$ be a conditional topological space.
A \emph{conditional open covering} of $\mathbf{X}$ is a conditional family $(\mathbf{O}_\mathbf{i})$ of conditional open sets such that $\mathbf{X} = \sqcup \mathbf{O}_{\mathbf{i}}$.
A conditional family $(\mathbf{Y}_\mathbf{i})$ of conditional subsets of $\mathbf{X}$ has the \emph{conditional finite intersection property} if\footnote{Recall that $\sqcup_{\mathbf{1}\leqslant\mathbf{l}\leqslant \mathbf{n} } $ and $\sqcap_{\mathbf{1}\leqslant\mathbf{l}\leqslant \mathbf{n}}$ are understood as the conditional union and intersection over all conditional elements $\mathbf{l}$ such that $\mathbf{1}\leqslant \mathbf{l}\leqslant \mathbf{n}$.} $\sqcap_{\mathbf{1}\leqslant \mathbf{l} \leqslant \mathbf{n}} \mathbf{Y}_{\mathbf{i}_{\mathbf{l}}}$ is on $1$ for every conditionally finite subfamily $(\mathbf{Y}_{\mathbf{i}_{\mathbf{l}}})_{\mathbf{1}\leqslant \mathbf{l} \leqslant \mathbf{n}}$. 
\begin{definition}
    We say that $\mathbf{X}$ is \emph{conditionally compact} if for every conditional open covering $(\mathbf{O}_{\mathbf{i}})$ there exists a conditionally finite subfamily $(\mathbf{O}_{\mathbf{i}_\mathbf{l}})_{\mathbf{1}\leqslant \mathbf{l} \leqslant \mathbf{n}}$ such that $\mathbf{X}= \sqcup_{\mathbf{1}\leqslant \mathbf{l} \leqslant \mathbf{n}} \mathbf{O}_{\mathbf{i}_\mathbf{l}}$. 
\end{definition}
\begin{proposition}\label{characcomp}
    Let $(\mathbf{X},\mathcal{T})$ be a conditional topological space. 
    Then the following are equivalent:
    \begin{enumerate}[label=(\roman*)]
        \item\label{cond:comp01} $\mathbf{X}$ is conditionally compact.
        \item\label{cond:comp03} Every conditional filter on $\mathbf{X}$ has a conditional cluster point. 
        \item\label{cond:comp04} Every conditional ultrafilter on $\mathbf{X}$ has a conditional limit point. 
        \item\label{cond:comp05} For every conditional family $(\mathbf{F}_\mathbf{i})$ of conditional closed subsets of $\mathbf{X}$ with the conditional finite intersection property, $\sqcap \mathbf{F}_{\mathbf{i}}$ is on $1$. 
    \end{enumerate}
\end{proposition}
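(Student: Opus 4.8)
The plan is to prove the cycle \ref{cond:comp01} $\Rightarrow$ \ref{cond:comp05} $\Rightarrow$ \ref{cond:comp03} $\Rightarrow$ \ref{cond:comp04} $\Rightarrow$ \ref{cond:comp01}, following the classical scheme but with conditional complementation, unions and intersections, and with Theorem \ref{bpit} in place of the classical ultrafilter lemma.

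For \ref{cond:comp01} $\Rightarrow$ \ref{cond:comp05} I argue by contraposition. If $(F^j)$ is a conditional family of conditionally closed sets with $\sqcap F^j = \mathbf{0}$, then conditional De Morgan gives $\sqcup (F^j)^\sqsubset = X$, so $((F^j)^\sqsubset)$ is a conditional open covering. Conditional compactness produces a conditionally finite subfamily with $X \sqsubseteq \sqcup_{1\leqslant k\leqslant n}(F^{j_k})^\sqsubset$, and complementing back yields $\sqcap_{1\leqslant k\leqslant n} F^{j_k} = \mathbf{0}$, so the conditional finite intersection property fails; here \eqref{eq:repfin} is used to pass between finite and conditionally finite intersections.

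For \ref{cond:comp05} $\Rightarrow$ \ref{cond:comp03}, let $\mathcal{F}$ be a conditional filter. The conditional family $(\Cl(Y))_{Y \in \mathcal{F}}$ consists of conditionally closed sets and inherits the conditional finite intersection property, because any conditionally finite conditional intersection of elements of $\mathcal{F}$ lives on $1$ and is conditionally contained in the corresponding intersection of closures. Hence \ref{cond:comp05} gives $\Lim\mathcal{F} = \sqcap\{\Cl(Y) : Y \in \mathcal{F}\} \neq \mathbf{0}$, i.e.\ $\mathcal{F}$ has a conditional cluster point. For \ref{cond:comp03} $\Rightarrow$ \ref{cond:comp04}, given a conditional ultrafilter $\mathcal{U}$ with conditional cluster point $x \in \Lim\mathcal{U}$, Proposition \ref{clustultra} furnishes a conditional filter $\mathcal{G} \sqsupseteq \mathcal{U}$ with $\mathcal{G} \to x$; maximality of $\mathcal{U}$ forces $\mathcal{G} = \mathcal{U}$, so $\mathcal{U} \to x$ and $x$ is a conditional limit point. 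Finally, for \ref{cond:comp04} $\Rightarrow$ \ref{cond:comp01} I again argue by contraposition: from a conditional open covering $(O^j)$ with no conditionally finite subcovering, the conditional sets $(\sqcup_{j\in N}O^j)^\sqsubset$ with $N$ conditionally finite are closed under conditional finite intersection and avoid $\mathbf{0}$, so by Remark \ref{rem:filters} they generate a conditional filter on $mX$ for a minimal $m>0$; extending it to a conditional ultrafilter by Theorem \ref{bpit} and applying \ref{cond:comp04} gives a limit point $x$, while $X\sqsubseteq\sqcup O^j$ places $bx \in O^{j_0}$ for some $b>0$ and index $j_0$, whence both $O^{j_0}$ and $(O^{j_0})^\sqsubset$ lie in the restricted ultrafilter and $\mathbf{0}$ belongs to a conditional filter --- a contradiction.

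The principal difficulty, absent classically, is the condition bookkeeping. At several points the relevant conditional point is a priori available only on a sub-condition $a<1$: $\Lim\mathcal{F}$ need not live on $1$, and the ultrafilter dichotomy of Proposition \ref{lem:ufilterprop} repeatedly forces the case split ``either $a=a_1$ or $a=a_2^c$''. These local conclusions must be amalgamated to the full condition $1$ by an exhaustion argument --- one forms the supremum of all conditions carrying the desired point and shows it equals $1$, since otherwise its complement would support a conditional filter with empty conditional $\Lim$, contradicting conditional compactness of the restriction $a^cX$. Making this rigorous relies on stability, on the fact that every restriction of a conditionally compact space is conditionally compact, and on $\Cl(aY)=a\Cl(Y)$ to commute closure with restriction; these localizations are exactly where I expect the work to concentrate.
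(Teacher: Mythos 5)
Your cycle \ref{cond:comp01}$\Rightarrow$\ref{cond:comp05}$\Rightarrow$\ref{cond:comp03}$\Rightarrow$\ref{cond:comp04}$\Rightarrow$\ref{cond:comp01} is structurally reasonable, and the first and third arrows are fine (the first is exactly the paper's argument; the third via Proposition \ref{clustultra} plus maximality is correct). The genuine flaw sits in how you close the gap you yourself flag. In \ref{cond:comp05}$\Rightarrow$\ref{cond:comp03}, hypothesis \ref{cond:comp05} only yields $\Lim\mathcal{F}\neq\mathbf{0}$, i.e.\ $\Lim\mathcal{F}$ lives on some $a>0$; as the paper notes, this gives conditional cluster points of the \emph{restricted} filter $a\mathcal{F}$, not of $\mathcal{F}$. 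You defer the upgrade to condition $1$ to your exhaustion paragraph, but there you derive the contradiction from ``conditional compactness of the restriction $a^cX$''. Inside the cycle, when proving \ref{cond:comp05}$\Rightarrow$\ref{cond:comp03} the only hypothesis available is \ref{cond:comp05}; compactness is precisely what the cycle is supposed to deliver at the end, so this exhaustion is circular. The correct exhaustion --- which is what the paper does --- re-invokes \ref{cond:comp05} itself: if $\Lim\mathcal{F}$ lives on $a<1$, then $\set{\Cl(Y):Y\in a^c\mathcal{F}}$ is again a conditional family of conditionally closed sets (each living on $a^c$) with the conditional finite intersection property, so \ref{cond:comp05} gives $\Lim a^c\mathcal{F}\neq\mathbf{0}$, living on some $b>0$; amalgamating a point of $(\Lim\mathcal{F})_a$ with a point of $(\Lim a^c\mathcal{F})_b$ shows by stability that $\Lim\mathcal{F}$ lives on $a\vee b>a$, contradicting the maximality of $a$. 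No appeal to compactness is needed or permitted.

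There is a second, smaller localization gap in \ref{cond:comp04}$\Rightarrow$\ref{cond:comp01}: Remark \ref{rem:filters} and Theorem \ref{bpit} hand you a conditional ultrafilter $\mathcal{U}$ on $mX$ for a minimal $m>0$, but \ref{cond:comp04} speaks of ultrafilters on $X$, so it does not apply to $\mathcal{U}$ as it stands. You must either extend before taking the ultrafilter --- amalgamate the base with $m^cX$, i.e.\ pass to $\set{mG+m^cX_{m^c}: G}$ (the paper uses exactly this device, ``$a\mathcal{F}+a^cX$ is a conditional filter on $X$'', in its own proof), then apply Theorem \ref{bpit} on $X$ --- or show that \ref{cond:comp04} localizes, e.g.\ by amalgamating $\mathcal{U}$ with a principal ultrafilter on $m^cX$ and checking the result is an ultrafilter via Proposition \ref{lem:ufilterprop}\ref{lem:004}. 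With either repair your final contradiction (both $bO^{j_0}$ and $b(O^{j_0})^\sqsubset$ lying in the filter $b\mathcal{U}$ on $bX$ for some $b>0$ with $b\wedge m>0$, obtained from Lemma \ref{lem:unionunion} applied to the covering) does go through. For comparison, the paper avoids ultrafilters entirely: it proves \ref{cond:comp01}$\Leftrightarrow$\ref{cond:comp05} by Boolean complementation in both directions, \ref{cond:comp03}$\Leftrightarrow$\ref{cond:comp05} with the \ref{cond:comp05}-based exhaustion above, and \ref{cond:comp03}$\Leftrightarrow$\ref{cond:comp04} from Proposition \ref{clustultra}, so Theorem \ref{bpit} never enters and all the condition bookkeeping is concentrated in the single exhaustion step.
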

\begin{proof}
    \begin{enumerate}[fullwidth]
        \item[] The equivalence of \ref{cond:comp03} and \ref{cond:comp04} follows from Theorem \ref{bpit} and Proposition \ref{clustultra}. 
        \item[] To show that \ref{cond:comp01} implies \ref{cond:comp05}, let $(\mathbf{F}_\mathbf{i})$ be a conditional family of conditional closed subsets of $\mathbf{X}$ with the conditional finite intersection property. 
        By contradiction, assume that $\sqcap \mathbf{F}_\mathbf{i}$ is on $a<1$. 
        Let $\mathbf{O}_{\mathbf{i}}:=(\mathbf{F}_{\mathbf{i}})^{\sqsubset} \in \mathcal{T}$ for each $\mathbf{i}$.
        Without loss of generality, assume that $\mathbf{O_i}$ is on $1$ for each $\mathbf{i}$. Otherwise, replace $\mathbf{O_i}$ by $\mathbf{\hat{O}_i}=\mathbf{O_i}|a_\mathbf{i}+\mathbf{X}|a^c_{\mathbf{i}}$ where $\mathbf{O_i}$ is on $a_{\mathbf{i}}\leq 1$.   
        By localization, $\mathbf{X}|a^c$ is conditionally compact with respect to the conditional topology $\mathcal{T}|a^c$, and $(\mathbf{O_i}|a^c)$ is a conditional open covering of $\mathbf{X}|a^c$.
        By assumption, there exists a conditionally finite subfamily $(\mathbf{O_{i_l}}|a^c)_{\mathbf{1}\leqslant \mathbf{l}\leqslant \mathbf{n}}$ such that 
            $\mathbf{X}|a^c=\sqcup_{\mathbf{1}\leqslant \mathbf{l}\leqslant \mathbf{n}} \mathbf{O_{i_l}}|a^c$. 
            It follows from de Morgan's law
            \begin{align*}
            \mathbf{X}|a&=(\mathbf{X}|a^c)^\sqsubset=\left(\sqcup_{\mathbf{1}\leqslant \mathbf{l}\leqslant \mathbf{n}} \mathbf{O_{i_l}}|a^c\right)^\sqsubset=\sqcap_{\mathbf{1}\leqslant \mathbf{l}\leqslant \mathbf{n}} (\mathbf{O_{i_l}}|a^c)^\sqsubset \\&= \sqcap_{\mathbf{1}\leqslant \mathbf{l}\leqslant \mathbf{n}} (\mathbf{F_{i_l}}|a^c + \mathbf{X}|a)=\left(\sqcap_{\mathbf{1}\leqslant \mathbf{l}\leqslant \mathbf{n}} \mathbf{F_{i_l}}\right)|a^c + \mathbf{X}|a. 
            \end{align*}
            By the conditional finite intersection property, $\sqcap_{\mathbf{1}\leqslant \mathbf{l}\leqslant \mathbf{n}} \mathbf{F_{i_l}}$ is on $1$, and therefore $(\sqcap_{\mathbf{1}\leqslant \mathbf{l}\leqslant \mathbf{n}} \mathbf{F_{i_l}})|a^c$ is on $a^c$. 
            Thus it holds $\mathbf{X}|a=(\sqcap_{\mathbf{1}\leqslant \mathbf{l}\leqslant \mathbf{n}} \mathbf{F_{i_l}})|a^c+\mathbf{X}|a$ if, and only if, $a^c=0$ which is the desired contradiction. 
        \item[] To show that \ref{cond:comp05} implies \ref{cond:comp01}, let $(\mathbf{O}_{\mathbf{i}})$ be a conditional open covering of $\mathbf{X}$. 
        Let 
        \begin{equation*}
        b:=\vee\{a: \sqcup_{\mathbf{1\leqslant l\leqslant n}}\mathbf{O_{i_l}}|a=\mathbf{X}|a \text{ for some conditionally finite subfamily } (\mathbf{O_{i_l}})\}.
        \end{equation*}
        By consistency, the well-ordering theorem and stability, $b$ is attained by some $(\mathbf{O_{i_l}})$. 
        By contradiction, suppose that $b<1$. 
        Up to localization, we may assume that $b^c=1$, otherwise the following argument is done on $b^c<1$. 
        Then for all conditionally finite subfamilies $(\mathbf{O_{i_l}})$, it holds 
        \begin{equation*}
        \left(\sqcup_{\mathbf{1\leqslant l\leqslant n}}\mathbf{O_{i_l}}\right)|a\neq \mathbf{X}|a, \quad \text{ for all } 0<a\leq 1.  
        \end{equation*}
        Hence $\left(\sqcup_{\mathbf{1\leqslant l\leqslant n}}\mathbf{O_{i_l}}\right)^\sqsubset=\sqcap_{\mathbf{1\leqslant l\leqslant n}}\mathbf{O_{i_l}}^\sqsubset=\sqcap_{\mathbf{1\leqslant l\leqslant n}}\mathbf{F_{i_l}}$ is on $1$, and thus $(\mathbf{F_i})$ satisfies the conditional finite intersection property. 
        By assumption, $\sqcap \mathbf{F_i}$ is on $1$. 
        However, this implies that $(\sqcap \mathbf{F_i})^\sqsubset=\sqcup \mathbf{O_i}\neq \mathbf{X}$, contrary to the assumption. 
        \item[] As for \ref{cond:comp03} implies \ref{cond:comp05}, let $(\mathbf{F_{\mathbf{i}})}$ be a conditional family of conditional closed subsets of $\mathbf{X}$ satisfying the conditional finite intersection property.
            Then $\mathcal{G}:=\{\mathbf{\sqcap_{1\leqslant l\leqslant n} F_{i_l}}\colon \mathbf{(F_{i_l})_{1\leqslant l\leqslant n}} \text{ conditionally finite} \}$ is a conditional filter base. 
            By assumption, there exists a conditional element $\mathbf{x}$ of $\Lim(\mathcal{F})$. 
            Thus $\sqcap \mathbf{F_{i}}$ is on $1$.  
        \item[] To show that \ref{cond:comp05} implies  \ref{cond:comp03}, let $\mathcal{F}$ be a conditional filter on $\mathbf{X}$. 
            Since $\Cl(\sqcap \mathbf{Y}_\mathbf{i})\sqsubseteq \sqcap \Cl(\mathbf{Y}_{\mathbf{i}})$, it follows that $\set{\Cl(\mathbf{Y}):\mathbf{Y} \in \mathcal{F}}$ is a stable collection of conditional closed subsets of $\mathbf{X}$ fulfilling the conditional finite intersection property.  
            Hence $\Lim \mathcal{F}$ is on $1$, and thus $\mathcal{F}$ has a conditional cluster point. 
    \end{enumerate}
\end{proof}
\begin{proposition}\label{p:compactsets}
    Let $(\mathbf{X},\mathcal{T})$ and $(\mathbf{X}^\prime,\mathcal{T}^\prime)$ be conditional topological spaces, $\mathbf{f}: \mathbf{X}\to \mathbf{X}^\prime$  a conditionally continuous function, and $\mathbf{Y}$  a conditional compact subset of $\mathbf{X}$ on $1$. 
    Then $\mathbf{f}(\mathbf{Y})$ is a conditional compact subset of $\mathbf{X}^\prime$. 
\end{proposition}
\begin{proof}
Let $(\mathbf{O_i})$ be a conditional open covering of $\mathbf{f(Y)}$, that is 
$\mathbf{f(Y)}\sqsubseteq \sqcup \mathbf{O_i}$. 
By \eqref{f:021}, \eqref{f:031} and \eqref{f:061}, one has 
\begin{equation*}
 \mathbf{Y}\sqsubseteq \mathbf{f}^{-1}(\mathbf{f}(\mathbf{Y}))\sqsubseteq \mathbf{f}^{-1}(\sqcup \mathbf{O_i})=\sqcup \mathbf{f}^{-1}(\mathbf{O_i}). 
 \end{equation*} 
 By Proposition \ref{p:cts}, it follows that $(\mathbf{f}^{-1}(\mathbf{O_i}))$ is a conditional open covering of $\mathbf{Y}$. 
By assumption, there exists a conditionally finite subfamily $(\mathbf{f(O_{i_l})})_{\mathbf{1\leqslant l\leqslant n}}$ such that $\mathbf{Y}\sqsubseteq \sqcup_{\mathbf{1\leqslant l\leqslant n}} \mathbf{f}(\mathbf{O_{i_l}})$. 
By \eqref{f:021}, \eqref{f:031} and \eqref{f:061}, one has 
\begin{equation*}
\mathbf{f(Y)}\sqsubseteq \mathbf{f}(\sqcup_{\mathbf{1\leqslant l\leqslant n}}\mathbf{f}^{-1}(\mathbf{O_{i_l}}))= \sqcup_{\mathbf{1\leqslant l\leqslant n}} \mathbf{f}(\mathbf{f}^{-1}(\mathbf{O_{i_l}}))\sqsubseteq \sqcup_{\mathbf{1\leqslant l\leqslant n}} \mathbf{O_{i_l}}. 
\end{equation*}
\end{proof}
\begin{proposition}\label{p:closedcompact}
Let $(\mathbf{X},\mathcal{T})$ be a conditionally compact space and $\mathbf{Y}$  a conditionally closed subset of $\mathbf{X}$ on $1$. 
Then $\mathbf{Y}$ is conditionally compact. 
\end{proposition}
\begin{proof}
Without loss of generality, we may assume that $\mathbf{Y}^\sqsubset$ is on $1$, since otherwise $\mathbf{Y}|a^c=\mathbf{X}|a^c$ is already conditionally compact by localization, where $\mathbf{Y}^\sqsubset$ is on $a<1$. 
Let $(\mathbf{O_i})$ be a conditional open cover of $\mathbf{Y}$. 
Then $(\mathbf{\hat{O}_i})$ where $\mathbf{\hat{O}_i}:=\mathbf{O_i}\sqcup \mathbf{Y}^\sqsubset$ is a conditional open cover of $\mathbf{X}$. 
By assumption, there exists a conditionally finite subfamily $(\mathbf{\hat{O}_{i_l}})_{\mathbf{1\leqslant l \leqslant n}}$ conditionally covering $\mathbf{X}$. 
Thus $(\mathbf{O_{i_l}})_{\mathbf{1\leqslant l \leqslant n}}$ is a conditional open cover of $\mathbf{Y}$. 
\end{proof}
We finish this section with a conditional Tychonoff's theorem. 
\begin{theorem}\label{thm:tychonoff}
    Let $(\mathbf{X}_i,\mathcal{T}_i)$ be a non-empty family of conditional topological spaces and let $\mathbf{X}=\prod \mathbf{X}_i$ be  endowed with the conditional product topology.
    Then $\mathbf{X}$ is conditionally compact if, and only if, $\mathbf{X}_i$ is conditionally compact for every $i$. 
\end{theorem}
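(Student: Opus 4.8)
The plan is to prove both implications through the ultrafilter characterization of conditional compactness from Proposition \ref{characcomp}, specifically the equivalence of \ref{cond:comp01} and \ref{cond:comp04}: a conditional topological space is conditionally compact if and only if every conditional ultrafilter on it admits a conditional limit point. I may assume the family $(X^i)$ is non-empty, the empty case being trivial. For the direction from $X$ to the factors, I would note that each conditional projection $\pi^i\colon X\to X^i$ is conditionally continuous, which is immediate from the definition of the conditional product topology as the conditional initial topology for $(\pi^i)$, and conditionally surjective, since every primal set $X^j_1$ is non-empty. Hence $\pi^i(X)=X^i$, and Proposition \ref{p:compactsets} tells us that the conditional image of the conditionally compact set $X$ under the conditionally continuous map $\pi^i$ is conditionally compact. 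Thus each $X^i$ is conditionally compact.

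For the converse, assume every $X^i$ is conditionally compact and let $\mathcal{U}$ be a conditional ultrafilter on $X$. By Proposition \ref{prop:ultrafilterfonct}, for each $i$ the conditional filter base $\pi^i(\mathcal{U})$ generates a conditional ultrafilter $\mathcal{U}^i$ on $X^i$. Conditional compactness of $X^i$ together with Proposition \ref{characcomp} yields that $\Lim\mathcal{U}^i$ lives on $1$ and consists of conditional limit points of $\mathcal{U}^i$. Using the conditional axiom of choice (Theorem \ref{thm:choice}, with ordinary choice sufficing when the index family is classical) I would select for each $i$ a conditional limit point $x^i\in\Lim\mathcal{U}^i$, so that $\mathcal{U}(x^i)\sqsubseteq\mathcal{U}^i$ by the definition of a conditional limit point of a filter, and set $x:=(x^i)\in\prod X^i=X$.

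It then remains to verify that $x$ is a conditional limit point of $\mathcal{U}$, that is $\mathcal{U}(x)\sqsubseteq\mathcal{U}$. A conditional neighborhood of $x$ contains a basic conditionally open set of the form $\sqcap_{k=1}^{n}(\pi^{i_k})^{-1}(O^{i_k})$ with $x^{i_k}\in O^{i_k}$ and $O^{i_k}\in\mathcal{T}^{i_k}$; since $\mathcal{U}$ is closed under conditional supersets and under conditionally finite intersections, it suffices to show each $(\pi^{i_k})^{-1}(O^{i_k})$ lies in $\mathcal{U}$. From $\mathcal{U}(x^{i_k})\sqsubseteq\mathcal{U}^{i_k}$ we get $O^{i_k}\in\mathcal{U}^{i_k}$, and as $\mathcal{U}^{i_k}$ is generated by $\pi^{i_k}(\mathcal{U})$ there is $U\in\mathcal{U}$ with $O^{i_k}\sqsupseteq\pi^{i_k}(U)$. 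Applying the left-hand inclusion of \eqref{f:061} gives $(\pi^{i_k})^{-1}(O^{i_k})\sqsupseteq(\pi^{i_k})^{-1}(\pi^{i_k}(U))\sqsupseteq U$, whence $(\pi^{i_k})^{-1}(O^{i_k})\in\mathcal{U}$. Therefore $\mathcal{U}(x)\sqsubseteq\mathcal{U}$, so $\mathcal{U}$ has a conditional limit point, and Proposition \ref{characcomp} yields that $X$ is conditionally compact.

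The main obstacle I anticipate is the converse direction, and within it two points demand care. First, assembling the factor-wise conditional limit points $x^i$ into a genuine conditional element $x=(x^i)$ of the product: this is exactly where the conditional choice principle of Theorem \ref{thm:choice} is required, and it must be invoked with the correct conditional index structure rather than treated as a purely classical selection. Second, the passage through the conditional product-topology neighborhood base, where one must check that a basic conditional neighborhood of $x$ genuinely reduces to a \emph{conditionally finite} intersection of conditional preimages $(\pi^{i_k})^{-1}(O^{i_k})$; here the stability of the generating system and the representation \eqref{eq:repfin} of conditionally finite intersections must be used so that closure of $\mathcal{U}$ under conditionally finite intersections can legitimately be applied. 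Once the conditional ultrafilter image of Proposition \ref{prop:ultrafilterfonct} is in hand, the remaining limit-point computation is a faithful transcription of the classical ultrafilter proof of Tychonoff's theorem.
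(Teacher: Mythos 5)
Your proposal is correct and takes essentially the same approach as the paper: the paper's proof of Theorem \ref{thm:tychonoff} is exactly the remark that the classical ultrafilter argument carries over by means of Propositions \ref{condfunc}, \ref{lem:ufilterprop}, \ref{prop:ultrafilterfonct} and \ref{p:compactsets}, which is the argument you have written out in detail (compactness via the ultrafilter criterion of Proposition \ref{characcomp}, pushforward ultrafilters along the conditional projections, and the image/preimage rules \eqref{f:021} and \eqref{f:061}). The two technical points you flag---assembling the factor-wise limit points into a conditional element of the product, and reducing basic conditional neighborhoods to conditionally finite intersections of preimages via stability and \eqref{eq:repfin}---are precisely where the conditional machinery enters, and your treatment of them is sound.
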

\begin{proof}
    Every conditional projection $\mathbf{pr}_i:\mathbf{X\to X}_i$ is conditionally continuous.
    Therefore, if $\mathbf{X}$ is conditionally compact, so is $\mathbf{X}_i=\mathbf{pr}_i(\mathbf{X})$ for every $i$ due to Proposition \ref{p:compactsets}.
    Conversely, assume that $\mathbf{X}_i$ is conditionally compact for each $i$ and let $\mathcal{U}$ be a conditional ultrafilter on $\mathbf{X}$.
    It follows from Proposition \ref{prop:ultrafilterfonct} that $\mathbf{pr}_i(\mathcal{U})$ is a conditional ultrafilter base on $\mathbf{X}_i$ for each $i$.  
    Since $\mathbf{X}_i$ is conditionally compact, $\mathbf{pr}_i(\mathcal{U})\to \mathbf{x}_i$ for some conditional element $\mathbf{x}_i$ of $\mathbf{X}_i$ for each $i$ due to Proposition \ref{characcomp}. 
    Let $\mathbf{O}_i$ be a conditional open neighborhood of $\mathbf{x}_i$ for some $i$. 
    Then there exists $\mathbf{U}\in\mathcal{U}$ such that $\mathbf{pr}_i(\mathbf{U})\sqsubseteq \mathbf{O}_i$. 
    By \eqref{f:021} and \eqref{f:061}, one has $\mathbf{U}\sqsubseteq \mathbf{pr}_i^{-1}(\mathbf{O}_i)$, and thus $\mathbf{pr}_i^{-1}(\mathbf{O}_i)\in \mathcal{U}$. 
    By stability of $\mathcal{U}$, it follows that any conditional neighborhood of $\mathbf{x}:=(\mathbf{x}_i)$ is an element in $\mathcal{U}$ which shows that $\mathcal{U}\to \mathbf{x}$. Proposition \ref{characcomp} implies that $\mathbf{X}$ is conditionally compact. 
\end{proof}
\section{Conditional real numbers and metric spaces}\label{ch:real}
\begin{definition}
    We call a conditional set $\mathbf{X}$ together with a conditional function $\mathbf{+}:\mathbf{X}\times \mathbf{X}\to \mathbf{X}$ a \emph{conditional group} if $(X,+)$ is a classical group.  
    A conditional set $\mathbf{X}$ with two conditional functions $\mathbf{+}:\mathbf{X}\times \mathbf{X}\to \mathbf{X}$ and $\mathbf{\cdot}:\mathbf{X}\times \mathbf{X}\to \mathbf{X}$ is a \emph{conditional ring} if $(X,+,\cdot)$ is a classical ring. 
    We denote by $\mathbf{0}$ and $\mathbf{1}$ the conditional neutral elements for $+$ and $\cdot$, respectively.\footnote{Whenever there is no risk of confusion, we use $+$ for addition and concatenations, and $0,1$ denote the neutral elements and the distinguished elements $0,1\in\mathcal{A}$.} 
    Let $(\mathbf{X},\mathbf{+},\mathbf{\cdot})$ be a conditional ring and $(\mathbf{X},\leqslant)$  a conditional totally ordered set. 
    We say that $\mathbf{X}$ is a \emph{conditional ordered ring} if $\mathbf{x}<\mathbf{y}$ implies $\mathbf{x}+\mathbf{z}<\mathbf{y} +\mathbf{z}$ and $\mathbf{x},\mathbf{y}>\mathbf{0}$ implies $\mathbf{x}\cdot \mathbf{y}>\mathbf{0}$ for all conditional elements $\mathbf{x,y,z}$ of $\mathbf{X}$. 
    A conditional ring $(\mathbf{X},\mathbf{+},\mathbf{\cdot})$ is a \emph{conditional field} if for every conditional element $\mathbf{x}$ of $\mathbf{X}^\ast:= \set{\mathbf{0}}^\sqsubset$ there exists a conditional element $\mathbf{y}$ of $\mathbf{X}^\ast$ such that $\mathbf{x}\cdot \mathbf{y}=\mathbf{y}\cdot \mathbf{x}=\mathbf{1}$. 
\end{definition}
For a conditional ordered field $(\mathbf{K},+,\cdot,\leqslant)$, define $\mathbf{K}_+:=\set{\mathbf{x}: \mathbf{x\geqslant 0}}$ and $\mathbf{K}_{++}:=\set{\mathbf{x}: \mathbf{x> 0}}$. 
The conditional absolute value $\abs{\cdot}:\mathbf{K}\to \mathbf{K}_+$ is generated by the stable function $|x|:=\max\set{x,-x}$.
\begin{examples}
    \begin{enumerate}[fullwidth, label=\arabic*)]
        \item Let $(S,+, \cdot)$ be a classical ring. The conditional functions generated by $+$ and $\cdot$ define a conditional ring structure on the conditional set $\mathbf{S}$ generated by $S$ in the sense of Example \ref{ex:L0} 5).   
            For instance, the conditional ring of conditional rational numbers $(\mathbf{Q},+,\cdot)$ is generated by the ring of rational numbers $(\mathbb{Q},+,\cdot)$.   
            Inspection shows that $\mathbf{Q}$ is a conditional ordered field where the conditional order is the one defined in Examples \ref{ex:order} 1). 
        \item Let $\mathbf{Q}^\mathbf{N}$ be the conditional set of conditional sequences of conditional elements of $\mathbf{Q}$.
            On $\mathbf{Q}^\mathbf{N}$ define 
            \begin{align*}
                \mathbf{(q_n) + (p_n)}&:= \mathbf{(q_n + p_n), \quad (q_n)\cdot (p_n):=(q_n\cdot p_n), }\\
                (\mathbf{q_n})& \leqslant (\mathbf{p_n)} \text{ whenever } \mathbf{q_n\leqslant p_n} \text{ for each } \mathbf{n}. 
            \end{align*}
            Inspection shows that $(\mathbf{Q}^\mathbf{N},+,\cdot,\leqslant)$ is a conditional ordered ring. 
    \end{enumerate}
\end{examples}
Endow $\mathbf{Q}$ with the conditional Euclidean topology, see Example \ref{exep:Qtop}.  
A conditional sequence $(\mathbf{q_n})$ of conditional elements of $\mathbf{Q}$ is said to be \emph{conditionally Cauchy} if for every conditional element $\mathbf{r}$ of $\mathbf{Q}_{++}$ there exists $\mathbf{n}_0$ such that $\abs{\mathbf{q_n}-\mathbf{q_m}}<\mathbf{r}$ for all $\mathbf{m},\mathbf{n}\geqslant \mathbf{n}_0$.
Denote by $\mathbf{C}$ the conditional subset of $\mathbf{Q}^\mathbf{N}$ consisting of all conditional Cauchy sequences. 
Let $(\mathbf{p_n})\sim (\mathbf{q_n})$ whenever $(\mathbf{q_n}-\mathbf{p_n})\to 0$ be a conditional equivalence relation on $\mathbf{C}$. 
On $\mathbf{R}:=\mathbf{C}/\sim$ define 
\begin{align*}
    [(\mathbf{q_n})]+[(\mathbf{p_n})]&:=[(\mathbf{q_n})+(\mathbf{p_n})],\quad [(\mathbf{q_n})]\cdot[(\mathbf{p_n})]:=[(\mathbf{q_n})\cdot(\mathbf{p_n})],\\
    [(\mathbf{p_n})]\leqslant [(\mathbf{q_n})] \text{ whenever for all }  &\mathbf{r} \text{ of } \mathbf{Q}_{++} \text{ there exists } \mathbf{n}_0 \text{ such that } \mathbf{q_n}-\mathbf{p_n}>-\mathbf{r} \text{ for all } \mathbf{n}\geqslant \mathbf{n}_0.  
\end{align*}
Inspection shows that $(\mathbf{R},+,\cdot, \leqslant)$ is a conditional ordered field. 
\begin{definition}
    We call $\mathbf{R}=(\mathbf{R},+,\cdot, \leqslant)$ the conditional ordered field of \emph{conditional real numbers}.
\end{definition}
It can be checked that  
\begin{enumerate}[label=(\roman*)]
    \item $\mathbf{R}$ is conditionally Dedekind complete; 
    \item for every conditional element $\mathbf{x}$ of $\mathbf{R}$ there exists a conditional element $\mathbf{n}$ of $\mathbf{N}$ such that $\mathbf{n}>\mathbf{x}$;  
    \item for conditional elements $\mathbf{x},\mathbf{r}$ of $\mathbf{R}$ such that $\mathbf{r>0}$, let $\mathbf{B}_{\mathbf{r}}(\mathbf{x})=\set{\mathbf{y}: \abs{\mathbf{x}-\mathbf{y}}<\mathbf{r}}$. 
        The conditional collection $\mathcal{B}$ of conditional sets $\set{\mathbf{B}_{\mathbf{r}}(\mathbf{x})\colon \mathbf{x}, \mathbf{r} \text{ of }\mathbf{R}\text{ with }\mathbf{r>0}}$ is a conditional topological base of a conditional Hausdorff topology.
        $\mathbf{R}$ endowed with $\mathcal{T}^\mathcal{B}$ is conditionally separable and complete. 
        We call $\mathcal{T}^\mathcal{B}$ the \emph{conditional Euclidean topology} on $\mathbf{R}$.  
\end{enumerate}
The following theorem connects the conditional analysis in $L^0$-modules \citep{Cheridito2012,kupper03,DKKM13} to conditional set theory. 
For a proof of the isomorphism of $L^0$ and the real numbers in the universe of Boolean-valued sets over the measure algebra associated to a $\sigma$-finite measure space see \cite[Theorem 7.1]{bell2005set}. 
The conditional set $\mathbf{L^0}$ is constructed in Example \ref{ex:L0} 4), and its conditional topology is defined in Example \ref{e:l0top}. 
\begin{theorem}
    Let $\mathcal{A}$ be the measure algebra associated to a $\sigma$-finite measure space $(\Omega,\mathcal{F},\mu)$. 
    Then there exists a conditional bijection from $\mathbf{R}$ to $\mathbf{L^0}$. 
\end{theorem}
\begin{proof} 
For $\sum q_n|a_n \in Q$, one has $\sum_{n\geq 1} q_n1_{A_n}:=\lim_{n\to \infty}(q_1 1_{A_1}+\ldots+q_n 1_{A_n})\in L^0$ where $a_n=[A_n]$ for each $n$. 
Let $\mathbf{x}$ be a conditional element of $\mathbf{L^0}$. 
Since $\mathbf{L^0}$ is conditionally separable, there exists a conditional sequence $(\mathbf{q_n})$ of conditional elements of $\mathbf{L^0}$ with $q_n=\sum_{m \geq 1} q_{n,m} 1_{A_{n,m}}$ where $(q_{n,m})$ is a sequence in $\mathbb{Q}$ such that $\mathbf{q_n}\to \mathbf{x}$, see \cite[Lemma 5.3.2]{jamneshan13}. 
Conversely, if $(\mathbf{q_n})$ is a conditional Cauchy sequence of conditionally rational-valued functions, then $(q_n)$ is a Cauchy net in $L^0$ endowed with the $L^0$-topology. 
By a conditional Bolzano-Weierstra\ss \, theorem, see \cite[Lemma 1.64]{foellmer01} or \cite[Theorem 3.8]{Cheridito2012}, one has that $(q_n)$ converges in the $L^0$-topology.\footnote{In the cited results the convergence is almost everywhere which implies convergence in $L^0$-topology for stable sequences.} 
From Proposition \ref{p:konvergenzobenunten} it follows that $(\mathbf{q_n})$ conditionally converges to $\mathbf{x}$. 
Then $\mathbf{f}:\mathbf{R}\to \mathbf{L^0}$ defined by the stable function $f(q_n)=\lim (\sum_{m\geq 1} q_{n,m} 1_{A_{n,m}})$ is a conditional function. 
It is conditionally injective since $\mathbf{L^0}$ is conditionally Hausdorff, and conditionally surjective due to the conditional separability of $\mathbf{L^0}$. 
\end{proof}
\begin{definition}
    Let $\mathbf{X}$ be a conditional set.
    A \emph{conditional metric} is a conditional function $\mathbf{d}:\mathbf{X}\times \mathbf{X}\to \mathbf{R}_+$ such that
    \begin{enumerate}[label=\textit{(\roman*)}]
        \item\label{cond:metric01} $\mathbf{d}(\mathbf{x},\mathbf{y})=\mathbf{0}$ if, and only if, $\mathbf{x}=\mathbf{y}$,
        \item\label{cond:metric02} $\mathbf{d}(\mathbf{x},\mathbf{y})=\mathbf{d}(\mathbf{y},\mathbf{x})$ for all conditional elements $\mathbf{x}$, $\mathbf{y}$ of $\mathbf{X}$,
        \item\label{cond:metric03} $\mathbf{d}(\mathbf{x},\mathbf{z})\leqslant \mathbf{d}(\mathbf{x},\mathbf{y})+\mathbf{d}(\mathbf{y},\mathbf{z})$ for all conditional elements $\mathbf{x},\mathbf{y},\mathbf{z}$ of $\mathbf{X}$.
    \end{enumerate}
    The pair $(\mathbf{X},\mathbf{d})$ is called a \emph{conditional metric space}. 
\end{definition}
Given a conditional metric space $(\mathbf{X},\mathbf{d})$, we define a conditional topological base $\mathcal{B}$ consisting of conditional balls $\mathbf{B}_\mathbf{r}(\mathbf{x})$ and conditional Cauchy sequences similarly to the respective definitions for $\mathbf{Q}$, see Example \ref{exep:Qtop}. 
Inspection shows that the conditional topology $\mathcal{T}^\mathcal{B}$ is conditionally Hausdorff and first countable.  
We say that $(\mathbf{X},\mathbf{d})$ is \emph{conditionally complete}, if every conditional Cauchy sequence has a conditional limit, and \emph{conditionally sequentially compact}, if every conditional sequence has a conditional cluster point. 
A conditional subset $\mathbf{Y}$ of $\mathbf{X}$ on $1$ is \emph{conditionally totally bounded}, if for every conditional element $\mathbf{r}$ of $\mathbf{R}_{++}$ there exists a conditionally finite family $(\mathbf{x_l)_{1\leqslant l\leqslant n}}$ of conditional elements of $\mathbf{Y}$, called a \emph{conditional $\mathbf{r}$-net}, such that $\sqcup_{\mathbf{1\leqslant l\leqslant n}} \mathbf{B_{r}(x_l)}=\mathbf{Y}$.  
A conditionally totally bounded metric space is conditionally separable.
Indeed, for every conditional element $\mathbf{r}$ of $\mathbf{Q}_{++}$ choose $(\mathbf{x^r_l)_{1\leqslant l\leqslant m_{\mathbf{r}}}}$ such that $\mathbf{\sqcup_{1\leqslant l \leqslant m_{\mathbf{r}}} \mathbf{B_{r}(x^r_l)}=\mathbf{X}}$. 
Then $\mathbf{\sqcup_{r \text{ of } \mathbf{Q}_{++}}\set{\mathbf{x^r_l: 0\leqslant l\leqslant m_r}}}$ is a conditionally countable dense subset by Proposition \ref{lem:repfin}. 

We characterize conditional compactness in conditional metric spaces by a conditional Borel-Lebesgue theorem.   
\begin{theorem}\label{thm:metriccompat}
    For a conditional metric space $(\mathbf{X},\mathbf{d})$, the following are equivalent:
    \begin{enumerate}[label=\textit{(\roman*)}]
        \item\label{met:comp} $\mathbf{X}$ is conditionally compact.
        \item\label{met:decreas} If $(\mathbf{F_n})$ is a conditionally decreasing\footnote{That is, $\mathbf{m}\leqslant \mathbf{n}$ implies $\mathbf{F_n}\sqsubseteq \mathbf{F_m}$.} family of conditionally closed sets in $\mathbf{X}$, then $\sqcap \mathbf{F_n}$ is on $1$. 
        \item\label{met:seccomp} $\mathbf{X}$ is conditionally sequentially compact.
        \item\label{met:bound} $\mathbf{X}$ is conditionally complete and totally bounded. 
    \end{enumerate}
\end{theorem}
\begin{proof}
    \begin{enumerate}[fullwidth]
        \item[] As for \ref{met:comp} implies \ref{met:decreas}, note that $\sqcap_{\mathbf{1}\leqslant \mathbf{l}\leqslant \mathbf{m}}\mathbf{F_{n_l}}=\mathbf{F_{n_m}}$ for any conditionally finite subfamily $(\mathbf{F_{n_l}})_{\mathbf{1}\leqslant\mathbf{l}\leqslant \mathbf{m}}$. Thus $\sqcap_{\mathbf{1}\leqslant \mathbf{l}\leqslant \mathbf{m}}\mathbf{F_{n_l}}$ is on $1$. Proposition \ref{characcomp} implies that $\sqcap \mathbf{F_n}$ is on $1$. 
        \item[] To show that \ref{met:decreas} implies \ref{met:seccomp}, let $(\mathbf{x_n})$ be a conditional sequence of conditional elements of $\mathbf{X}$. For each $n\in N$, set $E_n:=\{x_m: m\geq n\}$. Then $(E_n)$ is a stable family of stable subsets of $X$, and thus $(\mathbf{E_n})$ is a conditional family of conditional subsets of $\mathbf{X}$. 
        Let $\mathbf{F_n}:=\Cl(\mathbf{E_n})$ for each conditional element $\mathbf{n}$ of $\mathbf{N}$. 
        Then $(\mathbf{F_n})$ is a conditionally decreasing family of conditional closed sets. 
        By assumption, there exists a conditional element $\mathbf{x}$ of $\sqcap \mathbf{F_n}$. 
        Now set $\mathbf{x_{n_1}}=\mathbf{x_1}$. 
        Supposing that we have already chosen $\mathbf{x_{n_1}},\ldots,\mathbf{x_{n_{k-1}}}$, choose a conditional element $\mathbf{x_{n_k}}$ of $\mathbf{F_{n_{k-1}+1}}$ such that $\mathbf{d(x_{n_k},x)}<\mathbf{1/k}$. 
        For each $k=\sum n_i|a_i\in N$, set $\mathbf{x_{n_k}}:=\sum \mathbf{x_{n_i}}|a_i$. 
    Inspection shows that $(\mathbf{x_{n_k}})$ is a conditional subsequence of $(\mathbf{x_n})$ conditionally converging to $\mathbf{x}$. 
    \item[] We show that \ref{met:seccomp} implies \ref{met:bound}. 
    Conditional completeness follows by the conditional triangle inequality.  
    As for the conditional total boundedness, suppose for the sake of contradiction that $b:=\vee M>0$ where 
    \begin{equation*}
   M=\{a: \sqcup_{\mathbf{1}\leqslant \mathbf{l}\leqslant \mathbf{n}}\mathbf{B_r(x_l)}|a^\prime \neq \mathbf{X}|a^\prime \text{ for all }  0 < a^\prime\leq a \text{ for some }\mathbf{r>0}\text{ and all }  (\mathbf{x_l})_{\mathbf{1\leqslant l\leqslant n}} \text{ of } \mathbf{X} \}. 
    \end{equation*}
    Note that if $a\in M$, then $a^\prime\in M$ whenever $a^\prime\leq a$. 
    Index $M$ by $(a_i)$. 
    By the well-ordering theorem, there exists $(b_i)\in p(b)$ with $b_i\leq a_i$ for all $i$. 
    Then $b_i\in M$ for each $i$.
    Let $\mathbf{r}_i>\mathbf{0}$ satisfy the condition in $M$ for $b_i$ for each $i$. 
    By stability, $b$ is attained in $M$ for $\mathbf{r}=\sum \mathbf{r}_i|b_i$.  
    Without loss of generality, assume that $b=1$. 
    Let $\mathbf{x_1}$ be any conditional element of $\mathbf{X}$. 
    Suppose we have found a conditionally finite family $(\mathbf{x_l})_{\mathbf{1\leqslant l\leqslant n}}$ such that $\mathbf{d(x_{l_1},x_{l_2})}\geqslant \mathbf{r}$ for all $\mathbf{1\leqslant l_1,l_2\leqslant n}$ with $\mathbf{l_1\sqcap l_2}=\mathbf{N}|0$.  
     Let 
    \begin{equation*}
    c:=\vee\{a: \mathbf{d}(\mathbf{x_l},\mathbf{x})|a\geqslant \mathbf{r}|a \text{ for some }\mathbf{x} \text{ of } \mathbf{X} \text{ and for all }\mathbf{1\leqslant l\leqslant n}\}. 
    \end{equation*}
    By consistency, the well-ordering theorem and stability, $c$ is attained by some $\mathbf{x_{n+1}}:=\mathbf{x}$. 
     It holds $c=1$, since otherwise there exists $a>0$ such that $\sqcup_{\mathbf{1\leqslant l \leqslant n}} \mathbf{B_r(x_l)}|a=\mathbf{X}|a$ which contradicts the maximality of $b$. 
     For $\mathbf{1\leqslant l\leqslant n+1}$, put $\mathbf{x_l}:=\sum \mathbf{x_{n_i}}|a_i$ where $l=\sum n_i|a_i$.  
    By induction, we obtain a conditional sequence $(\mathbf{x_n})$ such that $\mathbf{d(x_{n_1},x_{n_2})}\geqslant \mathbf{r}$ whenever $\mathbf{n_1\sqcap n_2}=\mathbf{N}|0$. 
    By construction, $(\mathbf{x_n})$ does not have a conditionally converging subsequence. 
    Indeed, if there exists a conditional subsequence $(\mathbf{x_{n_k}})$ conditionally converging to some $\mathbf{x}$ of $\mathbf{X}$, then there exists $\mathbf{k}_0$ of $\mathbf{N}$ such that $\mathbf{d(x_{n_k},x)}<\mathbf{r/2}$ for all $\mathbf{k}\geqslant \mathbf{k}_0$. 
    By the conditional triangle inequality, one has $\mathbf{d}(\mathbf{x}_{\mathbf{n}_{\mathbf{k}_0+\mathbf{1}}},\mathbf{x}_{\mathbf{n}_{\mathbf{k}_0+\mathbf{2}}})< \mathbf{r}$ contrary to the definition of $(\mathbf{x_n})$. 
    \item[] To show that \ref{met:bound} implies \ref{met:comp}, suppose, by contradiction, that $b:=\vee M>0$ where  
    \begin{equation*}
    M=\{a: \sqcup_{\mathbf{1}\leqslant \mathbf{l}\leqslant \mathbf{n}} \mathbf{U_{i_l}}|a^\prime\neq \mathbf{X}|a^\prime \text{ for all } 0<a^\prime\leq a \text{ and } (\mathbf{U_{i_l}}) \text{ of some conditional open covering } (\mathbf{U_i})\}. 
    \end{equation*}
   By consistency, the well-ordering theorem and stability, $b$ is attained by some $(\mathbf{U_i})$. 
   By localization, we may assume that $b=1$. 
   Now for $\mathbf{r=1/2}$ there exists a conditional $\mathbf{1/2}$-net $(\mathbf{x}^1_{\mathbf{l}})_{\mathbf{1\leqslant l\leqslant n}_1}$ by assumption. 
   Let 
   \begin{equation*}
   c_1:=\vee\{a:  \sqcup_{\mathbf{1\leqslant k\leqslant m}}\mathbf{U_{i_k}}|a^\prime\neq \mathbf{B_{1/2}}(\mathbf{x}^1_\mathbf{l})|a^\prime \text{ for all } 0<a^\prime\leq a \text{ and } (\mathbf{U_{i_k}})_{\mathbf{1\leqslant k\leqslant m}} \text{ of } (\mathbf{U_i}) \text{ for some } \mathbf{x}^1_\mathbf{l}\}. 
   \end{equation*}
   By consistency, stability and the well-ordering theorem, $c_1$ is attained by some $\mathbf{y_1}:=\mathbf{x}^1_\mathbf{l}$. By assumption, it must hold $c_1=1$. Indeed, if $c_1^c>0$, then for all $\mathbf{x}^1_\mathbf{l}$ there exists $(\mathbf{U_{i_k}})_{\mathbf{1\leqslant k\leqslant m}}$ such that $\sqcup_{\mathbf{1\leqslant k\leqslant m}}\mathbf{U_{i_k}}|c^c_1=\mathbf{B_{1/2}}(\mathbf{x}^1_\mathbf{l})|c^c_1$. 
   Proposition \ref{lem:repfin} implies that $\mathbf{X}|c^c_1$ is conditionally covered by a conditionally finite subfamily of $(\mathbf{U_i}|c^c_1)$ which contradicts the maximality of $b$. 

  Next for $\mathbf{r=1/4}$, let $(\mathbf{x}^2_{\mathbf{l}})_{\mathbf{1\leqslant l\leqslant n}_2}$ be a conditional $\mathbf{1/4}$-net, and set $c=\vee M$ where $M$ is the collection of all $a\in\mathcal{A}$ such that there exists $\mathbf{x}^2_\mathbf{l}$ with $\mathbf{B_{1/2}(y_1)}\sqcap \mathbf{B_{1/4}}(\mathbf{x}^2_{\mathbf{l}})$ is on $1$ and 
 \begin{equation*}
   \sqcup_{\mathbf{1\leqslant k\leqslant m}} \mathbf{U_{i_k}}|a^\prime\neq \mathbf{B_{1/4}}(\mathbf{x}^2_\mathbf{l})|a^\prime \text{ for all } 0<a^\prime\leq a \text{ and } (\mathbf{U_{i_k}})_{\mathbf{1\leqslant k\leqslant m}} \text{ of } (\mathbf{U_i}). 
   \end{equation*} 
   By consistency, the well-ordering theorem and stability, $c_2$ is attained by some $\mathbf{y_2}:=\mathbf{x}^2_\mathbf{l}$. 
   Moreover, one has $c_2=1$. 
   Indeed, since $(\mathbf{B_{1/4}}(\mathbf{x}^2_\mathbf{l}))_{\mathbf{1\leqslant l\leqslant n}_2}$ is a conditional family, for each conditional element $\mathbf{x}$ of $\mathbf{B_{1/2}(y_1)}$ there exists $\mathbf{x}^2_\mathbf{l}$ such that $\mathbf{x}$ is a conditional element of $\mathbf{B_{1/4}}(\mathbf{x}^2_\mathbf{l})$ due to Lemma \ref{lem:1fcunion}.  
   Thus if $c_2^c>0$, then for all $\mathbf{x}^2_\mathbf{l}$ such that $\mathbf{B_{1/2}(y_1)}\sqcap \mathbf{B_{1/4}}(\mathbf{x}^2_\mathbf{l})$ is on $1$ there exists a conditionally finite subfamily $(\mathbf{U_{i_k}})$ of $(\mathbf{U_i})$ such that $\sqcup \mathbf{U_{i_k}}|c^c=\mathbf{B_{1/4}}(\mathbf{x}^2_\mathbf{l})|c_2^c$. 
   By Proposition \ref{lem:repfin}, this contradicts the maximality of $b$.  
   
   We continue analogously: at the $n$-th stage, with $\mathbf{r=1/2^n}$ let $\mathbf{y_n}$ be any conditional element of that conditional $\mathbf{1/2^n}$-net such that $\mathbf{B_{1/2^{n-1}}(y_{n-1}})\sqcap \mathbf{B_{1/2^{n}}(y_{n}})$ is on $1$, and having the property that if a conditional finite subfamily of $(\mathbf{U_i}|a)$ conditionally covers $\mathbf{B_{1/2^{n}}(y_{n}})|a$, then $a=0$. 

   For $n=\sum n_i|a_i$, set $\mathbf{y_n}=\sum\mathbf{y_{n_i}}|a_i$. 
   By construction, $(\mathbf{y_n})$ is a conditional Cauchy sequence of conditional elements of $\mathbf{X}$. 
   By assumption, $(\mathbf{y_n})$ conditionally converges to some conditional element $\mathbf{y}$ of $\mathbf{X}$. 
   Since $(\mathbf{U_i})$ conditionally covers $\mathbf{X}$, there exists some $\mathbf{i}_0$ such that $\mathbf{y}$ is a conditional element of $\mathbf{U}_{\mathbf{i}_0}$ by Lemma \ref{lem:1fcunion}.   
   Since $\mathbf{U}_{\mathbf{i}_0}$ is conditionally open there exists $\mathbf{r>0}$ such that $\mathbf{B_r(y)}\sqsubseteq \mathbf{U}_{\mathbf{i}_0}$. By the definition of $\mathbf{y}$, there exists $\mathbf{n}$ of $\mathbf{N}$ such that $\mathbf{d(y_n,y)}<\mathbf{r/2}$ and $\mathbf{1/2^n}<\mathbf{r/2}$. 
   It follows from the conditional triangle inequality that $\mathbf{B_{1/2^n}(y_n)}\sqsubseteq \mathbf{B_r(y)}\sqsubseteq \mathbf{U}_{\mathbf{i}_0}$. 
   But this contradicts the fact that there does not exist a conditionally finite subfamily of $(\mathbf{U_i})$ which is a conditional covering of $\mathbf{B_{1/2^n}(y_n)}$. 
    \end{enumerate}
\end{proof}
\section{Conditional topological vector spaces}\label{ch:vector}
\begin{definition} 
    A conditional group $\mathbf{X}$ is a \emph{conditional vector space} if there exists a conditional function $\cdot:\mathbf{R}\times \mathbf{X}\to \mathbf{X}$ such that $X$ is an $R$-module in the classical sense. 
    A conditional subset $\mathbf{Y}$ of $\mathbf{X}$ on $1$ is a \emph{conditional subspace} of $\mathbf{X}$ if  $\mathbf{Y}+\mathbf{Y}\sqsubseteq \mathbf{Y}$ and  $\mathbf{r} \mathbf{Y}\sqsubseteq \mathbf{Y}$ for every conditional element $\mathbf{r}$ of $\mathbf{R}$. 
    A conditional function $\mathbf{f}:\mathbf{X} \to \mathbf{R}$ is \emph{conditionally linear} if $\mathbf{    f(r x + y) = r f(x) + f(y)}$ for all conditional elements $\mathbf{x,y}$ of $\mathbf{X}$ and every conditional element $\mathbf{r}$ of $\mathbf{R}$. 
\end{definition}
\begin{definition}
Let $\mathbf{X}$ be a conditional vector space.
    Let $\mathbf{Y}$ be a conditional subset of $\mathbf{X}$ on $1$.
    Define
    \begin{equation*}
        \co(\mathbf{Y}):=\Set{\sum_{\mathbf{1\leqslant l\leqslant n}} \mathbf{r_l y_l} \colon \mathbf{(y_l)_{1\leqslant l\leqslant n}}  \text{ of } \mathbf{Y},\,\mathbf{0\leqslant r_l\leqslant 1},\,\mathbf{\sum_{1\leqslant l\leqslant n}r_l=1}, \,\mathbf{n} \text{ of } \mathbf{N}}.\footnote{We understand $\mathbf{\sum_{1\leqslant l\leqslant n} x_l}:=\sum(\sum_{l=1}^{n_i} \mathbf{x}_l)|a_i$ where $n=\sum n_i|a_i$.}
    \end{equation*}
    We say that $\mathbf{Y}$ is
    \begin{itemize}
    \item  \emph{conditionally} \emph{convex} if $\mathbf{Y}=\co(\mathbf{Y})$, 
    \item  \emph{conditionally} \emph{absorbing} if for any conditional element $\mathbf{x}$ of $\mathbf{X}$ there exists $\mathbf{r_x>0}$ such that $\mathbf{r x}$ is a conditional element of $\mathbf{Y}$ for every $\mathbf{r}$ of $\mathbf{R}$ with $\mathbf{\abs{r}\leqslant r_x}$, 
    \item  \emph{conditionally} \emph{circled} if $\mathbf{r y}$ is a conditional element of $\mathbf{Y}$ for every conditional element $\mathbf{y}$ of $\mathbf{Y}$ and all conditional elements $\mathbf{r}$ of $\mathbf{R}$ with $\mathbf{\abs{r}\leqslant 1}$. 
    \end{itemize}    
    A conditional function $\mathbf{f:X\to R}$ is \emph{conditionally convex} if $$\mathbf{    f(r x +(1-r) y) \leqslant r f(x)+(1-r)f(y)}$$ for all conditional elements $\mathbf{x,y}$ of $\mathbf{X}$ and $\mathbf{0\leqslant r\leqslant 1}$. 
\end{definition}
We prove a conditional Hahn-Banach theorem. 
\begin{theorem}
Let $\mathbf{X}$ be a conditional vector space, $\mathbf{Y}$  a conditional subspace of $\mathbf{X}$ and $\mathbf{f}:\mathbf{Y}\to \mathbf{R}$  a conditional linear function. 
    If $\mathbf{f(x)}\leqslant \mathbf{g(x)}$ for all conditional elements $\mathbf{x}$ of $\mathbf{Y}$ for some conditional convex function $\mathbf{g}:\mathbf{X}\to \mathbf{R}$, then there exists a conditional linear function $\mathbf{\hat{f}}:\mathbf{X} \to \mathbf{R}$ such that $\mathbf{\hat{f}(x)}\leqslant \mathbf{g(x)}$ for all conditional elements $\mathbf{x}$ of $\mathbf{X}$ and $\mathbf{\hat{f}(x)}=\mathbf{f(x)}$ for all conditional elements $\mathbf{x}$ of $\mathbf{Y}$. 
    \label{thm:hahnbanach}
\end{theorem}
\begin{proof}
    Let $\mathcal{E}$ be the collection of all pairs $\mathbf{(h,H)}$ where $\mathbf{H}$ is a conditional subspace of $\mathbf{X}$ with $\mathbf{Y}\sqsubseteq \mathbf{H}$ and $\mathbf{h}:\mathbf{H} \to \mathbf{R}$ is a conditionally linear function such that $\mathbf{h}=\mathbf{f}$ on $\mathbf{Y}$ and $\mathbf{h}\leqslant \mathbf{g}$ on $\mathbf{H}$.  
    Define a partial order on $\mathcal{E}$ by $\mathbf{(h,H)}\leq \mathbf{(h^\prime, H^\prime)}$ whenever $\mathbf{H\sqsubseteq H^\prime}$ and $\mathbf{h^\prime=h}$ on $\mathbf{H}$. 
    Let $(\mathbf{h}_i,\mathbf{H}_i)$ be a chain in $\mathcal{E}$. 
    By Proposition \ref{rule02}, it holds $(\mathbf{h},\mathbf{H})\in\mathcal{E}$ where $\mathbf{H}:=\sqcup \mathbf{H}_i$ and $\mathbf{h:\mathbf{H}\to R}$ is defined by $\mathbf{h(x)}=\sum \mathbf{h}_{i_j}(\mathbf{x}_{j})|a_{i_j}$ for every $\mathbf{x}=\sum \mathbf{x}_j|a_j$ of $\mathbf{H}$ where $(a_j)\in p(1)$ and $\mathbf{x}_{j}$ of $\mathbf{H}_{i_j}$ for each $j$. 
    By Zorn's lemma, there exists a maximal element $(\mathbf{\hat{f},\hat{H}})$ in $\mathcal{E}$.  

    By contradiction, suppose that $\mathbf{\hat{H}}^{\sqsubset}$ is on $a>0$. 
    By localization, we may assume that $a=1$. 
    Pick some $\mathbf{v}$ of $\mathbf{\hat{H}}^{\sqsubset}$ and put $\mathbf{\tilde{H}=\{x+rv: x \text{ of } \hat{H}, r\text{ of } R\}}$.  
    Inspection shows that $\mathbf{\tilde{H}}$ is a conditional subspace of $\mathbf{X}$ with $\mathbf{Y\sqsubseteq \hat{H}\sqsubset \tilde{H}\sqsubseteq X}$. 
    Every conditional element $\mathbf{y}$ of $\mathbf{\tilde{H}}$ is of the form $\mathbf{y=x+rv}$ for unique $\mathbf{x}$ of $\mathbf{\hat{H}}$ and $\mathbf{r}$ of $\mathbf{R}$.  
    Indeed, let $\mathbf{y=x+r v=\bar{x}+\bar{r}v}$ and suppose that $b=\vee \set{a:\mathbf{r}|a= \bar{\mathbf{r}}|a}<1$. 
    Since $\mathbf{(\bar{r}-r) v=x-\bar{x}}$ is a conditional element of $\mathbf{\hat{H}}$, it holds that $\mathbf{v}\sqcap \mathbf{\hat{H}}$ is on $b^c>0$ which contradicts the choice of $\mathbf{v}$. 
    Hence $b=1$, and thus $\mathbf{r}=\bar{\mathbf{r}}$ and $\mathbf{x=\bar{x}}$. 

    Any linear extension $\mathbf{\tilde{f}}$ of $\mathbf{\hat{f}}$ to $\mathbf{\tilde{H}}$ has to fulfill $\mathbf{\tilde{f}(x+r v)=\hat{f}(x)+r \tilde{f}(v)}$ for all $\mathbf{x}$ of $\mathbf{\hat{H}}$ and $\mathbf{r}$ of $\mathbf R$. 
    It is enough to find $\mathbf w$ of $\mathbf R$ such that $\mathbf{\hat{f}(x)+r w \leqslant g(x+r v)}$ for all $\mathbf x$ of $\mathbf{\hat{H}}$ and $\mathbf{r}$ of $\mathbf R$. In this case, there is $(a_1,a_2,a_3)\in p(1)$ such that $\mathbf{r}|a_1 > \mathbf{0}|a_1$, $\mathbf{r}|a_2<\mathbf{0}|a_2$ and $\mathbf{r}|a_3=\mathbf{0}|a_3$. 
    Thus
    \begin{align}
    \label{glei1} \mathbf{w}|a_1&\leqslant \left(\mathbf{\frac{1}{r}[g(x+rv)-\hat{f}(x)]}\right)|a_1,\\
    \label{glei2}    \mathbf{w}|a_2&\leqslant \left(\mathbf{-\frac{1}{r}[\hat{f}(x)- g(x+rv)}]\right)|a_2,\\
    \label{glei3}    \mathbf{\hat{f}(x)}|a_3&\leqslant \mathbf{g(x)}|a_3,
    \end{align}
for every $\mathbf{x}$ of $\mathbf{\hat{H}}$. 
The relation \eqref{glei3} is fulfilled by the definition of $\mathbf{\hat{f}}$. 
By inspection, the relations \eqref{glei1} and \eqref{glei2} hold for a conditional element $\mathbf{r}$ of $\mathbf{R}$ if, and only if,
\begin{equation}
 \mathbf{\frac{1}{p} [\hat{f}(x)-g(x-p v)]\leqslant w\leqslant \frac{1}{q} [\ g(y+q v)-\hat{f}(y)]}\label{glei4}
\end{equation}
for all conditional elements $\mathbf{x,y}$ of $\mathbf{\hat{H}}$ and $\mathbf{p,q}$ of $\mathbf{R}_{++}$. 
Thus \eqref{glei4} is fulfilled for some conditional element $\mathbf{r}$ of $\mathbf{R}$ which is the desired contradiction. 
\end{proof}
\begin{definition}
    A conditional vector space $\mathbf{X}$ endowed with a conditional topology $\mathcal{T}$  is  a 
    \emph{conditional topological vector space} if the conditional functions  $+:\mathbf{X}\times \mathbf{X}\to \mathbf{X}$ and $\cdot:\mathbf{R}\times \mathbf{X}\to \mathbf{X}$ are conditionally continuous. 
    We call a conditional subset $\mathbf{Y}$ of $\mathbf{X}$ on $1$ \emph{conditionally $\mathcal{T}$-bounded} if for every conditional neighborhood $\mathbf{U}$ of $\mathbf{0}$ there exists $\mathbf{r>0}$ such that $\mathbf{Y}\sqsubseteq \mathbf{r} \mathbf{U}$. 
    For a conditional topological vector space $\mathbf{X}$, denote by $\mathbf{X^\prime}$ the conditional vector space of all conditionally continuous and linear functions $\mathbf{f:\mathbf{X}\to \mathbf{R}}$.

    A conditional topological vector space $\mathbf{X}$ is \emph{conditionally locally convex} if $\mathbf{X}$ has a conditional neighborhood base of $\mathbf{0}$ consisting of conditionally convex sets. 
\end{definition}
Inspection shows that a conditional topological vector space has a conditional neighborhood base of $\mathbf{0}$ consisting of conditionally closed, absorbing and circled conditional sets. 

Separation theorems for locally convex topological $L^0$-modules are proved in \citep[Theorem 2.6 and 2.8]{kupper03}. 
The following theorem is the respective analogue for conditional locally convex topological vector spaces. Its proof technique is similar to \cite{kupper03}. 
\begin{theorem}\label{thm:condsep}
    Let $\mathbf{X}$ be a conditional locally convex topological vector space and $\mathbf{C}_1,\mathbf{C}_2$  two conditionally convex subsets of $\mathbf{X}$ on $1$ such that $\mathbf{C}_1 \sqcap \mathbf{C}_2=\mathbf{X}|0$.
    \begin{enumerate}[label=(\roman*)]
        \item If $\mathbf{C}_1$ is conditionally open, then there exists a conditionally continuous linear function $\mathbf{f}:\mathbf{X} \to \mathbf{R}$  such that
            \begin{equation*}
                \mathbf{f(x)}< \mathbf{f(y)}\quad \text{for every conditional element }\mathbf{x} \text{ of }\mathbf{C}_1\text{ and }\mathbf{y} \text{ of }\mathbf{C}_2.
                \label{eq:condsep01}
            \end{equation*}
        \item If $\mathbf{C}_1$ is conditionally compact and $\mathbf{C}_2$ conditionally closed, then there exists a conditionally continuous linear function $\mathbf{f}:\mathbf{X} \to \mathbf{R}$ and a conditional element $\mathbf{r}$ of $\mathbf{R}_{++}$ such that 
            \begin{equation*}
                \mathbf{f(x)}+\mathbf{r} <\mathbf{f(y)}\quad \text{for every conditional element }\mathbf{x} \text{ of }\mathbf{C}_1\text{ and }\mathbf{y} \text{ of }\mathbf{C}_2.
                \label{eq:condsep02}
            \end{equation*}
    \end{enumerate}
\end{theorem}
\begin{definition}
    Two conditional vector spaces $\mathbf{X}$ and $\mathbf{Y}$ form a \emph{conditional dual pair}, denoted by  
    $\langle \mathbf{X},\mathbf{Y}\rangle$, if there exists a conditional  function  
    $\langle \cdot,\cdot\rangle :\mathbf{X}\times \mathbf{Y} \to \mathbf{R}$ such that 
    \begin{enumerate}[label=\textit{(\roman*)}]
        \item if $\mathbf{x} \mapsto \langle \mathbf{x},\mathbf{y}\rangle $ for every fixed conditional element $\mathbf{y}$ of $\mathbf{Y}$ and $\mathbf{y} \mapsto \langle \mathbf{x},\mathbf{y}\rangle$ for every fixed conditional element $\mathbf{x}$ of $\mathbf{X}$ are  conditionally linear;
        \item if $\langle \cdot, \mathbf{y}\rangle= \mathbf{0}$ and $\langle \mathbf{x},\cdot\rangle = \mathbf{0}$ imply $\mathbf{y}=\mathbf{0}$ and $\mathbf{x}=\mathbf{0}$, respectively. 
    \end{enumerate}
\end{definition}
For a conditional dual pair $\langle \mathbf{X},\mathbf{Y}\rangle$, we denote by $\sigma(\mathbf{X},\mathbf{Y})$ and $\sigma(\mathbf{Y},\mathbf{X})$ the conditional initial topologies for $(\langle \cdot ,\mathbf{y}\rangle)_{\mathbf{y} \text{ of }\mathbf{Y}}$ and $(\langle \mathbf{x}, \cdot \rangle)_{\mathbf{x} \text{ of }\mathbf{X}}$, respectively.  
It can be checked that $(\mathbf{X},\sigma(\mathbf{X},\mathbf{Y}))^\prime=\mathbf{X}^\prime$. 
For a conditional locally convex topological vector space $\mathbf{X}$, it follows from Theorem \ref{thm:hahnbanach} that $\langle \mathbf{X},\mathbf{X^\prime}\rangle$ is a conditional dual pair with respect to the conditional function $(\mathbf{x,x^\prime})\mapsto \mathbf{x^\prime(x)}$. 
\begin{definition}
Let $\langle \mathbf{X},\mathbf{Y} \rangle $ be a conditional dual pair and $\mathbf{Z}$ a conditional subset of $\mathbf{X}$ on $1$. 
The conditional subset $\mathbf{Z^\circ}:=\Set{\mathbf{y}: \mathbf{\langle x,y \rangle\leqslant 1} \text{ for all } \mathbf{x} \text{ of } \mathbf{Z}}$\footnote{Note that $\mathbf{Z}^\circ$ is on $1$ since $\mathbf{0}$ is a conditional element of it.} of $\mathbf{Y}$ is called the \emph{conditional polar} of $\mathbf{Z}$. 
\end{definition}
\begin{proposition}\label{p:polar}
Let $\langle \mathbf{X},\mathbf{Y}\rangle$ be a conditional dual pair, $\mathbf{Z}$ and $\mathbf{W}$  two conditional subsets of $\mathbf{X}$ on $1$, and $(\mathbf{Z}_i)$  a non-empty family of conditional subsets of $\mathbf{X}$ each of which is on $1$. Then it holds:
    \begin{enumerate}[label=\textit{(\roman*)}]
        \item If $\mathbf{Z}\sqsubseteq \mathbf{W}$, then $\mathbf{W}^\circ \sqsubseteq \mathbf{Z}^\circ$.\label{pol:01}
        \item If $\mathbf{r}$ is a conditional element of $\mathbf{R}^{\ast}=\{\mathbf{0}\}^\sqsubset$, then $(\mathbf{rZ})^\circ=\mathbf{(1/r)} \mathbf{Z}^\circ$.\label{pol:02} 
        \item $(\sqcup \mathbf{Z}_i)^\circ=\sqcap \mathbf{Z}_i^\circ$. \label{verschnittpolar}
        \item $\mathbf{Z}^\circ$ is conditionally convex, $\sigma(\mathbf{Y},\mathbf{X})$-closed and $\mathbf{0}$ is a conditional element of it.\label{polarabg} 
    \end{enumerate}
    \end{proposition}
\begin{proof}
The assertions \ref{pol:01}, \ref{pol:02} and \ref{verschnittpolar} are immediate from the definitions. 
Proposition \ref{stetigmitnetz} implies assertion \ref{polarabg}. 
\end{proof}
The previous proposition together with Theorem \ref{thm:condsep} yield a conditional Bipolar theorem.  
\begin{theorem}\label{thm:bipl}
    Let $\langle \mathbf{X},\mathbf{Y} \rangle$ be a conditional dual pair and $\mathbf{Z}$  a conditional subset of $\mathbf{X}$ on $1$. 
    Then $\mathbf{Z}^{\circ \circ}=\Cl(\co(\mathbf{Z}\sqcup \mathbf{0}))$. 
            Thus if $\mathbf{Z}$ is conditionally convex, $\sigma(\mathbf{X},\mathbf{Y})$-closed and $\mathbf{0}$ is a conditional element of it, then $\mathbf{Z}^{\circ \circ}=\mathbf{Z}$. 
\end{theorem}
Next we prove a conditional Banach-Alaoglu theorem. 
\begin{theorem}
Let $\mathbf{X}$ be a conditional locally convex topological vector space and $\mathbf{U}$  a conditional neighborhood of $\mathbf{0}$. 
Then $\mathbf{U}^\circ$ is conditionally $\sigma(\mathbf{X}^\prime,\mathbf{X})$-compact. 
    \label{thm:banach-alaoglu}
\end{theorem}
\begin{proof}
The conditional $\sigma(\mathbf{X^\prime,X})$-topology is the conditional topology of conditional pointwise convergence in $\mathbf{X}$ and a conditional relative topology of the conditional product topology $\mathcal{T}$ on $\mathbf{R}^\mathbf{X}$.\footnote{As in the classical case, one identifies $\mathbf{X}^\prime$ with a conditional subspace of $\mathbf{X}^\mathbf{R}$.} 
Without loss of generality, we may assume that $\mathbf{U}$ is conditionally convex and circled due to Proposition \ref{p:polar}. 
The conditional function 
\begin{equation*}
\mathbf{\Phi}:(\mathbf{X}^\prime,\sigma(\mathbf{X^\prime,X}))\to (\mathbf{R}^\mathbf{X},\mathcal{T}), \quad \mathbf{\Phi}(\mathbf{x}^\prime)(\mathbf{x})=\langle \mathbf{x}^\prime,\mathbf{x}\rangle
\end{equation*}
is conditionally injective and its conditional inverse function 
\begin{equation*}
\mathbf{\Phi}^{-1}:(\mathbf{R}^\mathbf{X},\mathcal{T})\to (\mathbf{X}^\prime,\sigma(\mathbf{X^\prime,X}))
\end{equation*}
conditionally continuous. 
By Proposition \ref{p:compactsets}, it is enough to show that $\mathbf{\Phi(U^\circ)}$ is conditionally compact. 
To this end, let $\mathbf{x}$ be a conditional element of $\mathbf{X}$. 
Since $\mathbf{U}$ is conditionally absorbing, there exists $\mathbf{r_x>0}$ such that $\mathbf{x}$ is a conditional element of $\mathbf{r_x U}$. 
For $\mathbf{x}$ being a conditional element of $\mathbf{U}$, choose $\mathbf{r_x \leqslant 1}$. 
Since $\mathbf{U}$ is conditionally circled, it holds $\mathbf{\abs{\langle x^\prime, x\rangle}\leqslant r_x}$ for all conditional elements $\mathbf{x^\prime}$ of $\mathbf{U^\circ}$. 
Put $\mathbf{K_x}=\{\mathbf{r}: \mathbf{\abs{r}\leqslant r_x}\}$. 
It follows from Theorem \ref{thm:metriccompat} that $\mathbf{K_x}$ is conditionally compact. 
By Theorem \ref{thm:tychonoff}, 
\begin{equation*}
\mathbf{K}:=\{\mathbf{f} \text{ of } \mathbf{R^X}: \mathbf{f(x)} \text{ of } \mathbf{K_x} \text{ for all } \mathbf{x} \text{ of } \mathbf{X}\}
\end{equation*}
is conditionally compact. 
By Proposition \ref{p:closedcompact}, it remains to show that $\mathbf{\Phi(\mathbf{U}^\circ)}$ is conditionally closed. 
Let $\mathbf{f}$ be a conditional element of $\Cl(\mathbf{\Phi(\mathbf{U}^\circ)})$. 
Since the conditional pointwise limit of a conditional net of conditional linear functions is conditionally linear (due to the conditional continuity of conditional addition and scalar multiplication), $\mathbf{f}$ is conditionally linear. Since $\mathbf{r_x\leqslant 1}$ for $\mathbf{x}$ of $\mathbf{U}$, it follows that $\mathbf{f(U)}\sqsubseteq \{\mathbf{r}: \abs{\mathbf{r}}\leqslant \mathbf{1}\}$. 
Therefore, $\mathbf{f}$ is a conditional element of $\mathbf{\Phi(\mathbf{X}^\prime)}$. 
The remaining assertion follows from the fact that $\mathbf{U^\circ}$ is conditionally $\sigma(\mathbf{X^\prime,X})$-closed. 
    \end{proof}
\begin{definition}
    Let $\mathbf{X}$ be a conditional vector space.
    A \emph{conditional norm} is a conditional function $\norm{\cdot}:\mathbf{X}\to \mathbf{R}_+$ such that
    \begin{enumerate}[label=\textit{(\roman*)}]
        \item\label{cond:norm01} $\norm{\mathbf{x}}=\mathbf{0}$ if, and only if, $\mathbf{x}=\mathbf{0}$,
        \item\label{cond:norm02} $\norm{\mathbf{r} \mathbf{x}}=\abs{\mathbf{r}}\norm{\mathbf{x}}$ for all conditional elements $\mathbf{x}$ of $\mathbf{X}$ and $\mathbf{r}$ of $\mathbf{R}$,
        \item\label{cond:norm03} $\norm{\mathbf{x}+\mathbf{y}}\leqslant \norm{\mathbf{x}}+\norm{\mathbf{y}}$ for all conditional elements $\mathbf{x},\mathbf{y}$ of $\mathbf{X}$.
    \end{enumerate}
    A conditional vector space together with a conditional norm is called a \emph{conditional normed vector space}. 
\end{definition}
For a conditional norm $\norm{\cdot}:\mathbf{X}\to \mathbf{R}$, the conditional function $\mathbf{d}:\mathbf{X}\times \mathbf{X}\to \mathbf{R}$ 
defined by $\mathbf{d}(\mathbf{x},\mathbf{y})=\norm{\mathbf{x}-\mathbf{y}}$ is a conditional metric. 
A conditional normed vector space $(\mathbf{X},\norm{.})$ is called a \emph{conditional Banach space} if $(\mathbf{X},\mathbf{d})$ is conditionally complete.  
For a conditionally linear operator $\mathbf{T}:\mathbf{X}\to \mathbf{R}$, the conditional operator norm $\norm{\mathbf{T}}^\prime$ is defined by $\norm{\mathbf{T}}^\prime:=\sup\Set{\abs{\mathbf{T(x)}}:\mathbf{x} \text{ of } \mathbf{X}, \norm{\mathbf{x}}=\mathbf{1}}$. 
By inspection, if $\mathbf{X}$ is conditionally Banach, then $(\mathbf{X^\prime},\norm{\cdot}^\prime)$ is so.  

We close this paper by a conditional Krein-\v{S}mulian theorem. 
A conditional version of this theorem for modules over $L^\infty$ is proved in Eisele and Taieb \cite[Theorem 11.1]{eisele13}. 
For $\mathbf{r>0}$, let  
\begin{align*}
\mathbf{C^\prime_r}&=\{\mathbf{x^\prime}: \norm{\mathbf{x^\prime}}^\prime\leqslant \mathbf{r}\},\quad
\mathbf{C^\prime_r}(\mathbf{y^\prime})=\{\mathbf{x^\prime}: \norm{\mathbf{x^\prime-y^\prime}}^\prime\leqslant \mathbf{r}\} \quad \text{ for each $\mathbf{y^\prime}$ of $\mathbf{X^\prime}$}. 
\end{align*}
In particular, we denote by $\mathbf{C^\prime}:=\mathbf{C^\prime_1}$ and $\mathbf{C}:=\{\mathbf{x}: \norm{\mathbf{x}}\leqslant 1\}$. 
\begin{theorem}
    Let $\mathbf{X}$ be a conditional Banach space and $\mathbf{Y}$  a conditionally convex subset of $\mathbf{X}^\prime$ on $1$. Then $\mathbf{Y}$ is conditionally $\sigma(\mathbf{X}^\prime,\mathbf{X})$-closed if, and only if,
    $\mathbf{Y}\sqcap \mathbf{C_n^\prime}$ is conditionally $\sigma(\mathbf{X}^\prime,\mathbf{X})$-closed for all conditional elements $\mathbf{n}$ of $\mathbf{N}$. 
    \label{thm:Banach-Dieudonne}
\end{theorem}
\begin{proof} 
If $\mathbf{Y}$ is conditionally $\sigma(\mathbf{X^\prime,X})$-closed, then its conditional intersection with each $\mathbf{C_{n}^\prime}$ is so, since $\mathbf{C_{n}^\prime}$ 
is conditionally $\sigma(\mathbf{X^\prime,X})$-closed. 

As for the converse, note first that $\mathbf{Y}$ is conditionally norm-closed.
    Indeed, for a conditional sequence $(\mathbf{x^\prime_n})$ of conditional elements of $\mathbf{Y}$ such that $\mathbf{x^\prime_n}\to \mathbf{x^\prime}$, it follows from the conditional triangle inequality that there is some $\mathbf{m}$ of $\mathbf{N}$ such that $(\mathbf{x^\prime_n})$ is a conditional sequence of conditional elements of $\mathbf{Y}\sqcap \mathbf{C_m^\prime}$.
    Since conditional norm-convergence implies conditional $\sigma(\mathbf{X^\prime,X})$-convergence\footnote{This follows from $\abs{\langle \mathbf{x_n}, \mathbf{x^\prime}\rangle -\langle \mathbf{x,x^\prime}\rangle}\leqslant \norm{\mathbf{x^\prime}}^\prime
    \norm{\mathbf{x_n-x}}$.}  
    and since $\mathbf{x_n^\prime}$ is a conditional element of $\mathbf{Y}\sqcap \mathbf{C_m^\prime}$ which is conditionally $\sigma(\mathbf{X^\prime,X})$-closed by assumption, $\mathbf{x^\prime}$ is a conditional element of $\mathbf{Y}$. 
    Further $\mathbf{Y}\sqcap \mathbf{C^\prime_r(y^\prime)}$ is conditionally $\sigma(\mathbf{X}^\prime,\mathbf{X})$-closed for all conditional elements $\mathbf{r}$ of $\mathbf{R_{++}}$ and $\mathbf{y^\prime}$ of $\mathbf{X^\prime}$. 
Indeed, there exists $\mathbf{n}$ of $\mathbf{N}$ such that $\mathbf{C^\prime_r(y^\prime)}\sqsubseteq \mathbf{n}\mathbf{C^\prime}$. 
By assumption, $\mathbf{Y}\sqcap \mathbf{C^\prime_r(y^\prime)}=(\mathbf{Y}\sqcap \mathbf{n}\mathbf{C^\prime})\sqcap \mathbf{C^\prime_r(y^\prime)}$ is conditionally $\sigma(\mathbf{X^\prime,X})$-closed. 

    Up to conditional addition of $\mathbf{Y}$ by a conditional element, which is a conditionally continuous operation, we may assume that $\mathbf{0}$ is a conditional element of $\mathbf{Y}$. 
For each conditional element $\mathbf{n}$ of $\mathbf{N}$, put $\mathbf{Y_n}=\mathbf{Y\sqcap 2^n C^\prime}$. 
Then $\mathbf{Y_n}$ is conditionally convex, $\sigma(\mathbf{X^\prime,X})$-closed and $\mathbf{0}$ is a conditional element of $\mathbf{Y_n}$. 
It follows from Theorem \ref{thm:bipl} that $\mathbf{Y_n}=\mathbf{Y_n^{\circ\circ}}$. 
Put $\mathbf{Z}=\sqcap \mathbf{Y_n^\circ}$. Note that $\mathbf{Z}$ is on $1$ since $\mathbf{0}$ is a conditional element of each $\mathbf{Y_n^\circ}$. 
We will show that $\mathbf{Y}=\mathbf{Z^\circ}$ which will imply that $\mathbf{Y}$ is conditionally $\sigma(\mathbf{X^\prime,X})$-closed by Proposition \ref{p:polar}. 
Since $\mathbf{Z}\sqsubseteq \mathbf{Y_n^\circ}$, it follows that $\mathbf{Y_n}\sqsubseteq \mathbf{Z^\circ}$ from Proposition \ref{p:polar} which in turn implies $\mathbf{Y}=\sqcup \mathbf{Y_n}\sqsubseteq \mathbf{Z^\circ}$. 
The following steps will establish $\mathbf{Z^\circ}\sqsubseteq \mathbf{Y}$. 
    \begin{enumerate}[label=\textit{Step \arabic* :},fullwidth,ref=\textit{Step \arabic*}]
        \item\label{s1} We show that $\mathbf{Y_n^\circ}\sqsubseteq \mathbf{Y_{n+1}^\circ}+ \mathbf{2^{-n}}\mathbf{C}$ for every $\mathbf{n}$ of $\mathbf{N}$. To this end, let $\mathbf{x}$ be a conditional element of $(\mathbf{Y_{n+1}^\circ + 2^{-n} C})^\sqsubset$. By Theorem \ref{thm:condsep}, it holds $\mathbf{x^\prime}(\mathbf{x})\geqslant \mathbf{1}\geqslant \sup\{\mathbf{x^\prime(y)}: \mathbf{y} \text{ of } \mathbf{Y^\circ_{n+1}} + \mathbf{2^{-n} C}\}$ for some $\mathbf{x^\prime}$ of $\mathbf{X^\prime}$. 
        Since $\mathbf{2^{-(n+1)} C}\sqsubseteq \mathbf{Y^\circ_{n+1}}$ and $\mathbf{x^\prime(3\cdot 2^{-(n+1)}C)}=\mathbf{x^\prime(2^{-(n+1)}C + 2^{-n}C)}\sqsubseteq \mathbf{x^\prime(\mathbf{Y}^{\circ}_{n+1}+2^{-n}C)}$ by \eqref{f:021}, it holds $\norm{\mathbf{x^\prime}}^\prime\leqslant \mathbf{\frac{2}{3}2^n}$. 
        Since $$\sup\{\mathbf{y}: \mathbf{y}\text{ of } \mathbf{Y_{n+1}^\circ + 2^{-n} C}\}=\sup\{\mathbf{y}: \mathbf{y}\text{ of } \mathbf{Y_{n+1}^\circ}\}+\sup\{\mathbf{y}: \mathbf{y}\text{ of } \mathbf{2^{-n} C}\},$$ one has $\sup\{\mathbf{y}: \mathbf{y}\text{ of } \mathbf{Y_{n+1}^\circ}\}\leqslant \mathbf{1 - 2^{-n}}\norm{\mathbf{x^\prime}}^\prime$. 
        Choose $\mathbf{r}$ to be the conditional minimum of $\mathbf{1/3}$ and $\mathbf{2^{-n}}\norm{\mathbf{x^\prime}}^\prime$.
        Put $\mathbf{y^\prime}:=\mathbf{\frac{1}{1-r}}\mathbf{x^\prime}$. From $\mathbf{r}<\mathbf{2^{-n}}\norm{\mathbf{x^\prime}}^\prime$ and $\mathbf{\sup}\{\mathbf{y}: \mathbf{y}\text{ of } \mathbf{Y_{n+1}^\circ}\}\leqslant \mathbf{1 - 2^{-n}}\norm{\mathbf{x^\prime}}^\prime$, it follows that $\sup\{\mathbf{y}: \mathbf{y}\text{ of } \mathbf{Y_{n+1}^\circ}\}\leqslant \mathbf{1}$, that is, $\mathbf{y^\prime}$ is a conditional element of $\mathbf{Y_{n+1}^{\circ\circ}}=\mathbf{Y_{n+1}}$. 
        By the choice of $\mathbf{r}$, we get $\norm{\mathbf{y^\prime}}\leqslant \mathbf{2^n}$, and therefore $\mathbf{y^\prime}$ is a conditional element of $\mathbf{Y_n}$. Since $\mathbf{y^\prime}(\mathbf{x})>\mathbf{1}$, it follows that $\mathbf{x}$ is a conditional element of $(\mathbf{Y_n}^\circ)^\sqsubset$. 
       \item\label{s2} It is shown that $\mathbf{Y_n^\circ} \sqsubseteq \mathbf{Z}+\mathbf{2^{-(n-1)}C}$ for every conditional element $\mathbf{n}$ of $\mathbf{N}$. 
            For each $\mathbf{n}$ of $\mathbf{N}$, we pick inductively, using the result of \ref{s1}, $\mathbf{x_{n+1}},\mathbf{x_{n+2}},\ldots$ such that $\mathbf{x_m}$ is a conditional element of $\mathbf{Y_{m}^\circ}$ and $\norm{\mathbf{x_m-x_{m+1}}}\leqslant \mathbf{2^{-m}}$ for $\mathbf{m=n,n+1,\ldots}$. 
            For a conditional element $\mathbf{m}$ of $\mathbf{N}$ with $\mathbf{m=\sum m_i}|a_i$ where $(m_i)$ is a family in $\{n,n+1,\ldots\}$ and $(a_i)\in p(1)$, put $\mathbf{x_m}:=\sum \mathbf{x_{m_i}}|a_i$.  
            Since $(\mathbf{Y_n^\circ})$ is a conditional family, each $\mathbf{x_m}$ is a conditional element of $\mathbf{Y_m^\circ}$.
            By construction, $(\mathbf{x_m})$ is a conditional Cauchy sequence. 
            By the conditional completeness of $\mathbf{X}$, there exists a conditional element $\mathbf{x}$ of $\mathbf{X}$ such that $\mathbf{x_m}\to \mathbf{x}$. 
            Each $\mathbf{Y_n^\circ}$ is conditionally norm-closed. 
            Thus $\mathbf{x}$ is a conditional element of $\mathbf{Z}$ since $(\mathbf{x_m})$ is a conditional element of each $\mathbf{Y_m^\circ}$. 
            Moreover, it holds $\norm{\mathbf{x-x_n}}\leqslant \sum_{j=n}^\infty \mathbf{\frac{1}{2^j}}\leqslant \mathbf{\frac{1}{2^{n-1}}}$.  
        \item\label{s3} We show that $\mathbf{Y}=\sqcap_{\mathbf{r>0}}(\mathbf{1+r})\mathbf{Y}$. 
            Since $\mathbf{Y}$ is conditionally convex and $\mathbf{0}$ is a conditional element of $\mathbf{Y}$, the conditional inclusion $\sqsubseteq$ is immediate. 
            Conversely, for $\mathbf{x^\prime}$ of $\sqcap_{\mathbf{r>0}}(\mathbf{1+r})\mathbf{Y}$, setting $\mathbf{x_n^{\prime}=n/(1+n)x^\prime}$ for $\mathbf{n}$ of $\mathbf{N}$, defines a conditional sequence of conditional elements of $\mathbf{Y}$ such that $\norm{\mathbf{x^\prime-x_n^\prime}}^\prime\leqslant \norm{\mathbf{x^\prime}}^\prime/(\mathbf{n+1})\rightarrow \mathbf{0}$. 
            Hence $\mathbf{x_n\rightarrow x}$, and thus $\mathbf{x}$ is a conditional element of $\mathbf{Y}$ since $\mathbf{Y}$ is conditionally norm-closed. 
        \item We show that $\mathbf{Z}^\circ \sqsubseteq \sqcap_{\mathbf{r>0}}(\mathbf{1+r})\mathbf{Y}$. 
            It holds $\mathbf{x+y=(1+r)(\frac{x}{1+r}+(1-\frac{1}{1+r})\frac{y}{r})}$ for all $\mathbf{x},\mathbf{y}$ of $\mathbf{X}$ and some $\mathbf{r>0}$.  
            From \ref{s2} it follows that $\mathbf{Y_n^\circ}\sqsubseteq (\mathbf{1+r})\co(\mathbf{Z}\sqcup \mathbf{2^{n-1}/r C})$. 
            It follows from Proposition \ref{p:polar} that $\mathbf{1/(1+r) (Z^\circ \sqcap 2^{n-1}r C^\prime)}\sqsubseteq \mathbf{Y_n}\sqsubseteq \mathbf{Y}$ for every $\mathbf{n}$ of $\mathbf{N}$. 
            By taking the conditional union over all $\mathbf{n}$ of $\mathbf{N}$, we obtain $\mathbf{1/(1+r) Z^\circ} \sqsubseteq \mathbf{Y}$ for every $\mathbf{r>0}$, and thus $\mathbf{Z^\circ} \sqsubseteq \sqcap_{\mathbf{r>0}} (\mathbf{1+r})\mathbf{Y}=\mathbf{Y}$ by means of \ref{s3}. 
    \end{enumerate}
\end{proof}

\end{document}